\documentclass[9pt,reqno]{amsart}
\allowdisplaybreaks
\usepackage{amsmath,amstext,amssymb,epsfig}
\usepackage{amssymb}
\usepackage{tikz-cd}
\usepackage{graphicx}
\usepackage{mathrsfs}
\usepackage{xcolor}
\usepackage[a4paper, margin=1in]{geometry} 
\calclayout
\makeatletter
\renewcommand{\@seccntformat}[1]{\bf\csname the#1\endcsname.}
\renewcommand{\section}{\@startsection{section}{1}
	\z@{.7\linespacing\@plus\linespacing}{.5\linespacing}
	{\normalfont\upshape\bfseries\centering}}
\renewcommand{\@biblabel}[1]{\@ifnotempty{#1}{#1.}}
\makeatother
\theoremstyle{plain}
\newtheorem{theorem}{Theorem}[section]
\newtheorem{lemma}{Lemma}
\newtheorem{proposition}{Proposition}

\theoremstyle{definition}
\newtheorem{example}{Example}
\newtheorem{definition}{Definition}

\usepackage{tikz-cd}
\usepackage[parfill]{parskip}
\usepackage[german]{varioref}
\usepackage[all]{xy}
\usepackage{cancel}
\usepackage{stmaryrd}

\usepackage{tikz-cd}

\usepackage{tikz}
\usetikzlibrary{shapes.geometric, arrows, positioning}

\tikzstyle{startstop} = [rectangle, rounded corners, minimum width=3cm, minimum height=1cm,text centered, draw=black, fill=red!30]
\tikzstyle{process} = [rectangle, minimum width=3cm, minimum height=1cm, text centered, draw=black, fill=orange!30]
\tikzstyle{decision} = [diamond, minimum width=3cm, minimum height=1cm, text centered, draw=black, fill=green!30]
\tikzstyle{arrow} = [thick,->,>=stealth]
\usepackage{float}

\def\>{\succ}
\def\<{\prec}

\def\l{\lambda}

\begin{document}
	\title[Sania Asif \textsuperscript{1}, Zhixiang Wu\textsuperscript{2*}
    ]{Higher Structures of Rota--Baxter Lie $H$-Pseudoalgebras}
\author{Sania Asif\textsuperscript{1}, Zhixiang Wu\textsuperscript{2*}
}
\address{\textsuperscript{1} Institute of Mathematics, Henan Academy of Sciences, Zhengzhou, 450046, P.R. China.}
\address{\textsuperscript{2*}School of Mathematical Sciences, Zhejiang University, Hangzhou, Zhejiang Province, 310058, P.R. China} 
\email{\textsuperscript{1}11835037@zju.edu.cn}
\email{\textsuperscript{2*}wzx@zju.edu.cn}
 \keywords{ Rota--Baxter Lie $H$-pseudoalgebra; Cohomology; Homotopy; Non-abelian extension; Automorphism; Wells map.}
\subjclass[2020]{Primary 17B56, 17B69, 17B38,  Secondary 18N25, 18N40.}
\date{\today}
%

\begin{abstract} This paper investigates Rota–Baxter Lie $H$-pseudoalgebras. We develop a cohomology theory for $\lambda$-weighted relative Rota--Baxter operators via a Maurer--Cartan approach, constructing the underlying differential graded Lie algebra. We classify non-abelian extensions using second cohomology and derive the Wells exact sequence to address the inducibility of automorphisms. Furthermore, we explore the homotopy theory of these structures by introducing $2$-term skeletal and strict Rota--Baxter $L_\infty$-$H$-pseudoalgebras. In particular, we establish a one-to-one correspondence between strict $2$-term structures and crossed modules of Rota--Baxter Lie $H$-pseudoalgebras. These results establish a foundational framework for future advancements in the higher categorical theory of pseudoalgebras with algebraic operators. Ultimately, this work provides a robust foundation for the higher categorical study of pseudoalgebras equipped with algebraic operators. \end{abstract}

\footnote{The corresponding author's emails: ${}^*$ wzx@zju.edu.cn; 11835037@zju.edu.cn.}
\maketitle
 \section{Introduction}
\label{sec:intro}

The study of algebraic structures provides the foundational language for understanding symmetry, geometry, and dynamics across various fields of mathematics and mathematical physics. In the context of two-dimensional conformal field theory and vertex algebras, Lie conformal algebras emerged as a fundamental object \cite{kac1998}. These algebras encode the singular part of the operator product expansion (OPE) of chiral fields. The axiomatic framework of Lie conformal algebras provides a powerful algebraic setting for studying infinite-dimensional Lie algebras, such as the Virasoro and affine Kac-Moody algebras. Formally, a Lie conformal algebra is a $\mathbb{C}[\partial]$-module $L$ equipped with a $\mathbb{C}$-bilinear $\lambda$-bracket satisfying conformal sesquilinearity, skew-symmetry, and the Jacobi identity.

A significant categorical generalization of this theory was introduced by Bakalov, D'Andrea, and Kac \cite{Bakalov-DAndrea-Kac-Theory-of-finite-pseudoalgebras}, who developed the concept of Lie $H$-pseudoalgebras. This framework generalizes Lie conformal algebras by replacing the polynomial ring $\mathbb{C}[\partial]$ with an arbitrary cocommutative Hopf algebra $H$, placing the theory within the pseudotensor category $\mathcal{M}^*(H)$. When $H = \mathbb{C}[\partial]$, the theory recovers Lie conformal algebras; when $H = U(\mathfrak{d})$ for a finite-dimensional Lie algebra $\mathfrak{d}$, it yields multi-dimensional generalizations relevant to higher-dimensional field theories. The foundational work \cite{Bakalov-DAndrea-Kac-Theory-of-finite-pseudoalgebras} established a comprehensive structure theory for finite simple and semisimple Lie $H$-pseudoalgebras, connecting the pseudo-world to classical Lie algebras via the annihilation algebra functor. For further developments on pseudoalgebras, see \cite{Asif-Wu-2025, Boyallian-Liberati-On-pseudo-bialgebras, Chen-Lu-Wei-2025, Das-Nonabelian-cohomology, Gai-Wang-2025, Wu-Cohomology-$H$-pseudoalgebras}. 

Concurrently, the theory of Rota--Baxter algebras has become a central topic in mathematics. Originating from probability theory and combinatorics \cite{Baxter1960, Rota1969}, a Rota--Baxter operator of weight $\lambda$ on an algebra $(A, \circ)$ is a linear map $T: A \to A$ satisfying the identity $T(a) \circ T(b) = T(T(a) \circ b + a \circ T(b) + \lambda(a \circ b))$. This identity plays a crucial role in the algebraic approach to renormalization in perturbative quantum field theory \cite{Connes2000} and leads to the construction of pre-Lie algebras, which are fundamental in the study of operads and flat connections \cite{Guo2012, Li-Hou-Bai-2007}. The interaction between Rota--Baxter operators and various algebraic structures has led to rich cohomology and deformation theories \cite{Das2022, Guo-Keigher-2008, Liu-Wang-2020, Mondal-Saha-2025, Wang-Zhou-2021}.

In this paper, we unify these frameworks by developing a comprehensive cohomological and homotopy-theoretic structure for Rota--Baxter Lie $H$-pseudoalgebras. While cohomology theories for Rota--Baxter operators on classical Lie algebras have been established in \cite{Das2022}, extending these results to the pseudoalgebra setting requires overcoming significant technical challenges related to the pseudotensor category structure and the $H$-linearity of the maps. 

The main results of this paper can be summarized as follows:
\begin{enumerate}
    \item We construct a differential graded Lie algebra (dgLA) whose Maurer--Cartan elements are precisely the $\lambda$-weighted relative Rota--Baxter operators on Lie $H$-pseudoalgebras. This characterization naturally yields a twisting procedure and the associated cochain complex (Theorem \ref{thm:MC-Rota--Baxter}).
    \item We develop a non-abelian cohomology theory $H^2_{\mathrm{nab}}(L_T, M_S)$ for Rota--Baxter Lie $H$-pseudoalgebras, proving a bijection between this cohomology set and the equivalence classes of non-abelian extensions (Theorem \ref{thm:classification}).
    \item To investigate the inducibility of automorphisms in non-abelian extensions, we construct the Wells map $\mathfrak{W}: \mathrm{Aut}(M_S) \times \mathrm{Aut}(L_T) \to H^2_{\mathrm{nab}}(L_T, M_S)$. This leads to the derivation of the Wells exact sequence, which provides a section-independent criterion for inducibility (Theorems \ref{thm6.4} and \ref{thm:wells-rb}).
    \item Finally, we introduce 2-term skeletal and strict Rota--Baxter $L_\infty$-$H$-pseudoalgebras, and prove that strict 2-term structures are in one-to-one correspondence with crossed modules of Rota--Baxter Lie $H$-pseudoalgebras (Theorem \ref{thm:cross_rb}).
\end{enumerate}
 
The paper is organized as follows. Section \ref{sec:prelim} recalls the necessary background on Lie $H$-pseudoalgebras and Rota--Baxter operators. Section \ref{sec:cohomology} develops the Maurer--Cartan characterization and the cohomology of $\lambda$-weighted relative Rota--Baxter operators. Section \ref{sec:extensions} classifies non-abelian extensions, while Section \ref{sec:automorphisms} derives the Wells exact sequence. Finally, Section \ref{sec:homotopy} explores the homotopy theory and crossed modules.

Throughout the paper, all vector spaces, tensor products, and modules are defined over a field $\mathbb k$ of characteristic zero. The symbol $\otimes$ denotes the tensor product over $\mathbb k$, while $\otimes_H$ denotes the tensor product over the cocommutative Hopf algebra $H$. All modules are assumed to be left $H$-modules unless otherwise stated.

\section{Preliminaries on Rota--Baxter Lie $H$-Pseudoalgebras}\label{sec:prelim}
In this section, we establish the necessary background on Lie $H$-pseudoalgebras, their representations, and their cohomology theory. We then introduce Rota--Baxter operators on these structures, laying the foundation for the main results of the paper. 

\subsection{Lie $H$-Pseudoalgebras and Their Representations}
Let $H$ be a cocommutative Hopf algebra over a field $\mathbb{k}$ of characteristic zero, with comultiplication $\Delta$, counit $\varepsilon$, and antipode $S$. We use Sweedler's notation for the coproduct $\Delta(h) = h_{(1)} \otimes h_{(2)}$. Let $M^*(H)$ denote the pseudotensor category associated with $H$, as defined in \cite{Bakalov-DAndrea-Kac-Theory-of-finite-pseudoalgebras}. The objects of $M^*(H)$ are left $H$-modules, and the space of polylinear maps from a collection of modules $\{L_i\}_{i \in I}$ to a module $M$ is given by 
\begin{align*}
    \mathrm{Lin}(\{L_i\}_{i \in I}, M) := \mathrm{Hom}_{H^{\otimes I}}\left(\boxtimes_{i \in I} L_i, H^{\otimes I} \otimes_H M\right),
\end{align*}
where $\boxtimes_{i \in I} L_i$ denotes the tensor product $\bigotimes_{i \in I} L_i$ equipped with the natural $H^{\otimes I}$-module structure.

\begin{definition} {\cite[Definition 3.13]{Bakalov-DAndrea-Kac-Theory-of-finite-pseudoalgebras}}
A \emph{Lie $H$-pseudoalgebra} is a Lie algebra object in the pseudotensor category $M^*(H)$. Explicitly, it is a left $H$-module $L$ equipped with a $H^{\otimes 2}$-linear map ${[\cdot * \cdot]}_L : L \otimes L \to H^{\otimes 2} \otimes_H L$, called the \emph{pseudobracket}, that satisfies the following conditions for all $x, y, z \in L$:
\begin{enumerate}
    \item  Skew-symmetry: ${[x* y]}_L = -(\sigma \otimes_H \mathrm{id}_L){[y* x]}_L$, where $\sigma: H^{\otimes 2} \to H^{\otimes 2}$ is the transposition map.
    \item  Jacobi identity: ${[x * {[y * z]}_L]}_L = {[{[x* y]}_L * z]}_L + \left(\sigma  \otimes_H \mathrm{id}_L\right){[y * {[x * z]}_L]}_L$.
\end{enumerate}
The $H^{\otimes 2}$-linearity means that ${[fx * gy]}_L = ((f \otimes g) \otimes_H \mathrm{id}_L){[x* y]}_L$, for all $f, g \in H$. This operation can be generalised as follow
$$[Fx*Gy]=(F\Delta^n\otimes G\Delta^m\otimes_H\mathrm{id}_L)[x*y]$$ for $F\in H^{\otimes n+1},$  $G\in H^{\otimes m+1}$ and $x,y\in L$.
\end{definition}
\begin{example}
When $H = \mathbb{k}[\partial]$, a Lie $H$-pseudoalgebra is a \emph{Lie conformal algebra}. For example, the Virasoro conformal algebra $\mathrm{Vir}$ is generated by an element $L$ with the $\lambda$-bracket $[L_\lambda L] = (\partial + 2\lambda)L$, or equivalently, $[L*L]=(\partial\otimes 1-1\otimes\partial)\otimes _HL$.
\end{example}

\begin{definition}{\cite[Definition 3.3]{Boyallian-Liberati-On-pseudo-bialgebras}}
Let $L$ be a Lie $H$-pseudoalgebra. A \emph{representation} of $L$ (or an $L$-module) is a left $H$-module $M$ equipped with a $H^{\otimes 2}$-linear map $\rho: L \otimes M \to H^{\otimes 2} \otimes_H M$, defined by $x * u$, such that for all $x, y \in L$ and $u \in M$,
\begin{align*}
  {[x * y]}_L * u  = x * (y * u) - ((12)\otimes_H \mathrm{id}_M)(y * (x * u)) .
\end{align*}
\end{definition}
For any two $H$-modules $M$ and $N$, the space $\mathrm{Chom}(M, N) := \mathrm{Hom}_H(M, H \otimes N)$ plays a crucial role. When $N = \mathbb{k}$, $\mathrm{Chom}(M, \mathbb{k})$ is called the \emph{dual} of $M$.

    Let $(L, {[\cdot * \cdot]}_L)$ be a Lie $H$-pseudoalgebra. The \emph{adjoint representation} of $L$ is the representation of $L$ on itself given by the pseudobracket ${[\cdot * \cdot]}_L$. We denote the adjoint representation of Lie $H$-pseudoalgebra by $ad_L$.

Let $(L, {[\cdot * \cdot]}_L)$ be a Lie $H$-pseudoalgebra and let $M$ be an $L$-module with the action map $\rho$ defined by $x * u$ for $x \in L$, $u \in M$. The \emph{semidirect product} of $L$ and $M$, denoted by $L \ltimes M$, is the Lie $H$-pseudoalgebra defined on the direct sum $H$-module $L \oplus M$ with the pseudobracket given by
\begin{align*}
    {[(x,u) * (y, v)]}_{L \ltimes M} := \Big({[x * y]}_L, x * v - ((12) \otimes_H \mathrm{id}_M)(y * u)\Big),
\end{align*}
for all $(x,u), (y, v) \in L \oplus M$.
\subsection{Cohomology for Lie $H$-Pseudoalgebras}
Let $L$ be a Lie $H$-pseudoalgebra and $M$ be an $L$-module. For $n \geq 0$, the space of $n$-cochains $C^n(L, M)$ is defined~as
\begin{align*}
    C^n(L, M) := 
    \begin{cases}
        \mathbb{k} \otimes_H M & \text{if } n = 0, \\
        \mathrm{Lin}(\{L, \dots, L\}, M) = \mathrm{Hom}_{H^{\otimes n}}\left(L^{\otimes n}, H^{\otimes n} \otimes_H M\right) & \text{if } n \geq 1.
    \end{cases}
\end{align*}
The coboundary operator $d: C^n(L, M) \to C^{n+1}(L, M)$ is defined as follows.

For $n = 0$, let $1 \otimes_H u \in C^0(L, M)$. Then $d(1 \otimes_H u) \in C^1(L, M) = \mathrm{Hom}_H(L, M)$ is given by
\begin{align*}
    (d(1 \otimes_H u))(x) = \sum_i (\mathrm{id} \otimes \varepsilon)x*u.
\end{align*}

For $n \geq 1$, let $f \in C^n(L, M)$. Then $(df)(x_1, \dots, x_{n+1})$ is given by
\begin{align*}
    (df)(x_1, \dots, x_{n+1}) &= \sum_{i=1}^{n+1} (-1)^{i+1} (\sigma_{1\to i} \otimes_H \mathrm{id}_M)\Big( x_i * f(x_1, \dots, \hat{x}_i, \dots, x_{n+1}) \Big) \\&+ \sum_{1 \leq i < j \leq n+1} (-1)^{i+j} (\sigma_{1\to i, 2\to j} \otimes_H \mathrm{id}_M)f\Big( {[x_i * x_j]}_L, x_1,\cdots,\hat{x_i},\cdots \hat{x_j},\cdots x_{n+1}\Big),
\end{align*}
where $\sigma_{1\to i}$ is the permutation that moves the factor corresponding to $x_i$ to the first position in $H^{\otimes (n+1)}$, and $\sigma_{1\to i, 2\to j}$ is the permutation that moves the factors corresponding to $x_i$ and $x_j$ to the first two positions in $H^{\otimes (n+1)}$.
  The resulting complex $\Big(C^\bullet(L, M)= \bigoplus C^n(L, M), d\Big)$ defines the cohomology groups $H^\bullet(L, M)$. 

\subsection{Rota--Baxter Operators on Lie $H$-Pseudoalgebras}
We now recall Rota--Baxter operators on Lie $H$-pseudoalgebras.
\begin{definition} {\cite[Definition 2.5]{Liu-Wang-2020}}
Let $L$ be a Lie $H$-pseudoalgebra. A \emph{Rota--Baxter operator of weight $\lambda \in \mathbb{k}$} on $L$ is an $H$-linear map $T: L \to L$ that satisfies the \emph{Rota--Baxter identity} for all $x,y \in L$:
\begin{align*}
     {[T(x) * T(y)]}_L = (id_{H^{\otimes2}} \otimes_H T)\Big( {[T(x) * y]}_L + {[x * T(y)]}_L + \lambda {[x*y]}_L \Big).
\end{align*} 

A \emph{Rota--Baxter Lie $H$-pseudoalgebra} is a pair $(L, T)$ where $L$ is a Lie $H$-pseudoalgebra and $T: L \to L$ is a Rota--Baxter operator on $L$. We often denote such a structure simply by $L_T$.
\end{definition}

\begin{example} Let $\pi :\mathfrak{g}\to\mathfrak{b}$ be a homomorphism  of Lie algebras and $L$ be a Rota-Baxter operator  $T$ of weight $\lambda$ Lie $U(\mathfrak{g})$-pseudoalgebra. Then $U(\mathfrak{b})\otimes_{U(\mathfrak{g})}L$ is a Rota-Baxter $U(\mathfrak{b})$-pseudoalgebra with the Rota-Baxter operator $id_{U(\mathfrak{b})}\otimes_{U(\mathfrak{g})}T$ of weight $\lambda$. In particular, let $\mathfrak{g}=0$.
Then the current pseudoalgebra $\mathrm{Cur}(L) := U(\mathfrak{b}) \otimes L$ is a Rota-Baxter Lie $U(\mathfrak{b})$-pseudoalgebra. This construction is generalized in \cite[Example 2.9]{Liu-Wang-2020}.
\end{example}

A morphism between two Rota--Baxter Lie $H$-pseudoalgebras $(L, T)$ and $(L', T')$ is an $H$-module homomorphism $\phi: L \to L'$ such that
$(\mathrm{id}_{H^{\otimes 2}} \otimes_H \phi)\Big({[x * y]}_L\Big) = {[\phi(x) * \phi(y)]}_{L'} \quad \text{and} \quad  \phi \Big(T(x)\Big) = T'\Big(\phi(x)\Big),$
for all $x,y \in L$. An automorphism of $(L, T)$ is an invertible endomorphism of $L_T$.

\begin{proposition}\label{inducedrb}
Let $(L, {[\cdot * \cdot]}_L)$ be a Lie $H$-pseudoalgebra and $T: L \to L$ be a Rota--Baxter operator of weight $\lambda$ on $L$. Define a new pseudobracket ${[\cdot * \cdot]}_T$ on $L$ by
   $ {[x * y]}_T := {[T(x) * y]}_L + {[x * T(y)]}_L + \lambda {[x * y]}_L, \quad \text{for all } x, y \in L.$
Then:
\begin{enumerate}
    \item $(L, {[\cdot * \cdot]}_T)$ is a Lie $H$-pseudoalgebra.
    \item The map $T: (L, {[\cdot * \cdot]}_T) \to (L, {[\cdot * \cdot]}_T)$ is a Rota--Baxter operator of weight zero on the new Lie $H$-pseudoalgebra $(L, {[\cdot * \cdot]}_T)$.
     \item The map $T: (L, {[\cdot * \cdot]}_T) \to (L, {[\cdot * \cdot]}_L)$ is a morphism of Lie $H$-pseudoalgebras.
\end{enumerate}
\end{proposition}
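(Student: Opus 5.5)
Prove the three items in the order (3), (1), (2), since (3) is just a restatement of the Rota--Baxter identity and (1) is easiest to obtain from it.

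\textbf{Item (3) and preliminaries.} By definition of ${[\cdot*\cdot]}_T$, the Rota--Baxter identity reads
$$(\mathrm{id}_{H^{\otimes 2}}\otimes_H T){[x*y]}_T={[T(x)*T(y)]}_L,$$
which is exactly the statement that $T$ intertwines the two pseudobrackets; $T$ is $H$-linear by hypothesis. Before using this, check that ${[\cdot*\cdot]}_T$ is $H^{\otimes 2}$-linear. This holds because $T$ is $H$-linear: for instance ${[T(fx)*gy]}_L={[fT(x)*gy]}_L=((f\otimes g)\otimes_H\mathrm{id}){[T(x)*y]}_L$, and similarly for the other two terms. Skew-symmetry of ${[\cdot*\cdot]}_T$ is immediate term by term, since ${[T(x)*y]}_L$ and ${[x*T(y)]}_L$ are exchanged under $\sigma$ (with a sign) and ${[x*y]}_L$ is skew.

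\textbf{Jacobi identity for item (1).} Expand ${[x*{[y*z]}_T]}_T$ using the generalized $H$-linearity formula $[Fx*Gy]=(F\Delta^n\otimes G\Delta^m\otimes_H\mathrm{id})[x*y]$; this is what makes iterated pseudobrackets with coefficients in $H^{\otimes 2}\otimes_H L$ meaningful. Write $J_T(x,y,z)$ for the Jacobiator, i.e. the left side minus the right side of the Jacobi identity for ${[\cdot*\cdot]}_T$. The outer bracket produces a term of the form ${[x*(\mathrm{id}\otimes_H T){[y*z]}_T]}_L$. By the Rota--Baxter identity this equals ${[x*{[Ty*Tz]}_L]}_L$. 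Do the same for the other two terms of the Jacobi identity. Then group the resulting expressions by how many of $x,y,z$ carry a $T$ and by powers of $\lambda$. Each group is an instance of the $L$-Jacobi identity, with arguments $(Tx,Ty,z)$, $(Tx,y,Tz)$, $(x,Ty,Tz)$, $(Tx,y,z)$, etc., weighted by $1$, $\lambda$ or $\lambda^2$. So $J_T(x,y,z)$ is a sum of $L$-Jacobiators and hence vanishes. This mirrors the classical computation for Rota--Baxter Lie algebras.

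\textbf{Main obstacle.} The bookkeeping of the permutations $\sigma$ and $(12)$ in $H^{\otimes 3}\otimes_H L$ is the delicate part. Each term must sit in the correct tensor slot ordering so that the regrouped terms literally match the three terms of the $L$-Jacobi identity with the same $\sigma$-factors. I would first fix the convention that $\mathrm{id}\otimes_H T$ commutes with all permutation and coproduct operators on the $H$-factors, which it does because $T$ acts only on the $L$-factor. I would then verify the matching for one representative group, for example the weight-$\lambda$ terms with exactly one $T$, and argue that the remaining groups follow in the same way.

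\textbf{Item (2).} Apply $\mathrm{id}\otimes_H T$ to ${[x*y]}_T$ and use item (3): ${[Tx*Ty]}_T$ should equal $(\mathrm{id}\otimes_H T)\big({[Tx*y]}_T+{[x*Ty]}_T\big)$. Expanding both sides in terms of ${[\cdot*\cdot]}_L$, the left side is
$${[T^2x*Ty]}_L+{[Tx*T^2y]}_L+\lambda{[Tx*Ty]}_L.$$
Rewrite each summand via the Rota--Baxter identity as $T$ applied to an $L$-expression. Comparing with the right side, the terms of the form ${[Tx*Ty]}$ appear with matching coefficients on both sides and cancel. What remains is an equality of terms $T{[T^2x*y]}$ and $T{[x*T^2y]}$, weighted by $1$ and $\lambda$, which match directly. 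This is a short direct check; the only care needed is again that $T$ passes through the $H^{\otimes 2}$-coefficients.
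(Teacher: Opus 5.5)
The gap is in item (2), and it is not a bookkeeping problem: the weight-zero identity you set out to verify is false when $\lambda\neq0$. The third summand $\lambda{[Tx*Ty]}_L$ on the left must itself be rewritten by the Rota--Baxter identity. Once you do that, the two sides are
\begin{align*}
{[Tx*Ty]}_T&=(\mathrm{id}_{H^{\otimes 2}}\otimes_H T)\Big({[T^2x*y]}_L+2{[Tx*Ty]}_L+{[x*T^2y]}_L+2\lambda{[Tx*y]}_L+2\lambda{[x*Ty]}_L+\lambda^2{[x*y]}_L\Big),\\
(\mathrm{id}_{H^{\otimes 2}}\otimes_H T)\big({[Tx*y]}_T+{[x*Ty]}_T\big)&=(\mathrm{id}_{H^{\otimes 2}}\otimes_H T)\Big({[T^2x*y]}_L+2{[Tx*Ty]}_L+{[x*T^2y]}_L+\lambda{[Tx*y]}_L+\lambda{[x*Ty]}_L\Big).
\end{align*}
The $T^2$-terms and the ${[Tx*Ty]}_L$-terms match, but the $\lambda$-terms do not. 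The difference is $\lambda(\mathrm{id}_{H^{\otimes 2}}\otimes_H T){[x*y]}_T=\lambda{[Tx*Ty]}_L$, which is nonzero in general. For instance, $T=-\lambda\,\mathrm{id}_L$ is a Rota--Baxter operator of weight $\lambda$ with ${[x*y]}_T=-\lambda{[x*y]}_L$. For it, the left side minus the right side equals $\lambda^3{[x*y]}_L$, which is nonzero for $\lambda\neq0$ on any non-abelian $L$ such as $\mathrm{Vir}$. So your sentence that the remaining terms ``match directly'' is exactly where the argument fails.

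What is true is that $T$ is a Rota--Baxter operator of weight $\lambda$ on $(L,{[\cdot*\cdot]}_T)$, since the leftover term is precisely $\lambda(\mathrm{id}_{H^{\otimes 2}}\otimes_H T){[x*y]}_T$. This is what the paper's proof of (2) actually establishes: it rewrites the same three summands and adds them. The ``weight zero'' in the statement is inconsistent with that proof and holds only for $\lambda=0$. Aim your expansion at the weight-$\lambda$ identity and it closes exactly.

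Your items (3) and (1) are fine. For the Jacobi identity you take a different, self-contained route; the paper simply cites Liu--Wang's result for Leibniz pseudoalgebras. Your decomposition of the ${[\cdot*\cdot]}_T$-Jacobiator works. It consists of the $L$-Jacobiators at $(Tx,Ty,z)$, $(Tx,y,Tz)$ and $(x,Ty,Tz)$, plus $\lambda$ times those at the three one-$T$ triples, plus $\lambda^2$ times the one at $(x,y,z)$. Because the Jacobi identity here is in Leibniz form, each of the $21$ expanded terms keeps the permutation factor of the parent term it came from. So the $\sigma$-bookkeeping you worried about is automatic.
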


\begin{proof}
We prove each statement in turn.
\begin{enumerate}
\item To show that $(L, {[\cdot * \cdot]}_T)$ is a Lie $H$-pseudoalgebra, we must verify skew-symmetry and the Jacobi identity.
\begin{enumerate}
    \item For any $x, y \in L$, we have
    \begin{align*}
        {[y * x]}_T &= {[T(y) * x]}_L + {[y * T(x)]}_L + \lambda {[y * x]}_L \\
        &= -((12)\otimes_H \mathrm{id}_L){[x * T(y)]}_L - ((12)\otimes_H \mathrm{id}_L){[T(x) * y]}_L - \lambda ((12)\otimes_H \mathrm{id}_L){[x * y]}_L \\
        &= -((12)\otimes_H \mathrm{id}_L)\Big( {[T(x) * y]}_L + {[x * T(y)]}_L + \lambda {[x * y]}_L \Big) \\
        &= -((12)\otimes_H \mathrm{id}_L){[x * y]}_T,
    \end{align*}
    where we used the skew-symmetry of the original bracket ${[\cdot * \cdot]}_L$.
    \item  The Jacobi identity for ${[\cdot * \cdot]}_T$ follows from Proposition 3.2 in \cite{Liu-Wang-2020} which shows that if $L$ is a Leibniz $H$-pseudoalgebra (a generalization of a Lie $H$-pseudoalgebra) and $T$ is a Rota--Baxter operator, then the new bracket defined as above also satisfies the Jacobi identity. Since every Lie $H$-pseudoalgebra is a Leibniz $H$-pseudoalgebra, the result holds.
\end{enumerate}
Therefore, $(L, {[\cdot * \cdot]}_T)$ is a Lie $H$-pseudoalgebra.
\item We need to show that $T$ is a Rota--Baxter operator of weight $\lambda$ on $(L, {[\cdot * \cdot]}_T)$. Consider that
\begin{align*}
    {[T(x) * T(y)]}_T = {[T(T(x)) * T(y)]}_L + {[T(x) * T(T(y))]}_L + \lambda ({[T(x) * T(y)]}_L).
\end{align*}
Computing each term individually, we get
\begin{align*} {[T(T(x)) * T(y)]}_L&=(\mathrm{id}_{H^{\otimes 2}} \otimes_H T)\Big({[T(T(x)) * y]}_L + {[T(x) * T(y)]}_L + \lambda {[T(x) * y]}_L\Big), \\
    {[T(x) * T(T(y))]}_L&= (\mathrm{id}_{H^{\otimes 2}} \otimes_H T)\Big({[T(x) * T(y)]}_L + {[x * T(T(y))]}_L + \lambda {[x * T(y)]}_L\Big),\\
     {[T(x) * T(y)]}_L  &=(\mathrm{id}_{H^{\otimes 2}} \otimes_H T) \Big({[T(x) * y]}_L + {[x * T(y)]}_L + \lambda {[x * y]}_L\Big).
\end{align*}
Adding these terms yields
\begin{align*}
    &(\mathrm{id}_{H^{\otimes 2}} \otimes_H T)\Big( {[T(x) * y]}_T + {[x * T(y)]}_T +\lambda  {[x * y]}_T\Big).
\end{align*}
Thus, we have  \begin{align*}
     {[T(x) * T(y)]}_T= &(\mathrm{id}_{H^{\otimes 2}} \otimes_H T)\Big( {[T(x) * y]}_T + {[x * T(y)]}_T +\lambda  {[x * y]}_T\Big).
\end{align*}
\item We need to show that $T$ preserves the pseudobracket, i.e., ${[T(x) * T(y)]}_L = (id_{H^{\otimes 2}} \otimes_H T)({[x * y]}_T)$ for all $x, y \in L$. By the definition of ${[\cdot * \cdot]}_T$, we have
\begin{align}
    {[x * y]}_T = {[T(x) * y]}_L + {[x * T(y)]}_L + \lambda {[x * y]}_L.
\end{align}
Applying $(id_{H^{\otimes 2}} \otimes_H T)$ to both sides gives
\begin{align*}
    (id_{H^{\otimes 2}} \otimes_H T)({[x * y]}_T) = (id_{H^{\otimes 2}} \otimes_H T)\Big( {[T(x) * y]}_L + {[x * T(y)]}_L + \lambda {[x * y]}_L \Big).
\end{align*}
Since $T$ is a Rota--Baxter operator of weight $\lambda$ on $(L, {[\cdot * \cdot]}_L)$, we obtain
\begin{align*}
    (id_{H^{\otimes 2}} \otimes_H T)({[x * y]}_T) = {[T(x) * T(y)]}_L.
\end{align*}
which shows that $T$ is a morphism of two Lie $H$-pseudoalgebras.
\end{enumerate}It completes the proof.
 \end{proof} We call $(L, {[\cdot * \cdot]}_T)$ the algebra as ``deformed Lie $H$-pseudoalgebra" and its corresponding ``deformed Rota--Baxter Lie $H$-pseudoalgebra" is denoted by $(L, {[\cdot * \cdot]}_T, T)$ or  $(L_T, {[\cdot * \cdot]}_T)$. 

\begin{definition}\label{rbrep}
A representation of a Rota--Baxter Lie $H$-pseudoalgebra $(L, {[\cdot*\cdot]}_L, T)$ of weight $\lambda \in \mathbb{k}$ is a triple $(M, S, \rho)$ where $M$ is an $L$-module, $S: M \to M$ is an $H$-linear map $\rho$ is the action map defined by $\rho(x, u)=x*u$ such that for all $x \in L$, $u \in M$,
\begin{align*}
       T(x) * S(u) = (\mathrm{id}_{H^{\otimes 2}} \otimes_H S)\Big( T(x) * u + x * S(u) + \lambda (x * u )\Big).
\end{align*}
\end{definition}This condition ensures that the semidirect product $L \ltimes M$ can be endowed with a Rota--Baxter operator $T \oplus S$, see Proposition \ref{propsemidirect}.
\begin{example}
Consider a Rota--Baxter Lie $H$-pseudoalgebra $(L, {[\cdot * \cdot]}_L, T)$, then
\begin{enumerate}
    \item The triple $(L, {[\cdot * \cdot]}_L, T)$ is a representation of itself, where the action $\rho$ is given by the pseudobracket ${[\cdot * \cdot]}_L$. This is called the adjoint representation.
    \item For any representation $(M, \rho)$ of the underlying Lie $H$-pseudoalgebra $L$, the triple $(M, \rho, 0)$ is a representation of the Rota--Baxter Lie $H$-pseudoalgebra $(L, T)$.
\end{enumerate}

Let $(L, T)$ be a Rota--Baxter Lie $H$-pseudoalgebra of weight $\lambda$ and let $(M, \pm S)$ be a representation of it. Then $(M, \pm S-\lambda \mathrm{id}_M)$ is also a representation of Rota--Baxter Lie $H$-pseudoalgebra.
\end{example} 

Similar to Definition~\ref{rbrep}, we can define a representation on the \emph{deformed Rota--Baxter Lie $H$-pseudoalgebra}.
\begin{proposition}\label{inducedrbrep}
Let $(L, {[\cdot * \cdot]}_L, T)$ be a Rota--Baxter Lie $H$-pseudoalgebra of weight $\lambda$ and let $(M, S, \rho)$ be a representation on it. Define a new action $\rho^T: L \otimes M \to H^{\otimes 2} \otimes_H M$ defined by \begin{align*}
\rho^T(x, u) =x *_T u := T(x) * u + x * S(u) + \lambda (x * u),
\end{align*}
for all $x \in L$, $u \in M$. Then $(M, S, \rho^T)$ is also a representation of the deformed Rota--Baxter Lie $H$-pseudoalgebra  $(L, {[\cdot * \cdot]}_{T}, T)$  with respect to this new action $*_T$.
\end{proposition}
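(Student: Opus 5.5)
The plan is to reduce everything to Proposition~\ref{inducedrb} by passing through the semidirect product $L\ltimes M$, so that no new Jacobi-type computation is needed.

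\textbf{Step 1: $T\oplus S$ is a Rota--Baxter operator on $L\ltimes M$.} I will show that $T\oplus S:(x,u)\mapsto (T(x),S(u))$ is a Rota--Baxter operator of weight $\lambda$ on $L\ltimes M$. The bracket ${[(x,u)*(y,v)]}_{L\ltimes M}$ is bilinear, so it suffices to check the Rota--Baxter identity on three kinds of pairs.
\begin{enumerate}
\item On pairs $(x,0),(y,0)$ it is the Rota--Baxter identity for $T$.
\item On pairs $(x,0),(0,v)$ it is exactly the condition in Definition~\ref{rbrep}.
\item On pairs $(0,u),(y,0)$ the bracket is $-((12)\otimes_H\mathrm{id}_M)(y*u)$. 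This case follows from case (2) after applying $((12)\otimes_H \mathrm{id}_M)$. That map commutes with $\mathrm{id}_{H^{\otimes 2}}\otimes_H S$, since the two act on different tensor factors.
\item Pairs $(0,u),(0,v)$ bracket to zero on both sides.
\end{enumerate}
I expect this step to be the main, though mild, obstacle. The difficulty is only bookkeeping: one must confirm that the transposition $(12)$ passes through $\mathrm{id}\otimes_H S$, and that $H^{\otimes 2}$-linearity causes no trouble. (This is presumably the content of Proposition~\ref{propsemidirect}, but I will verify it directly here.)

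\textbf{Step 2: apply Proposition~\ref{inducedrb} to $(L\ltimes M, T\oplus S)$.} The deformed bracket ${[\cdot*\cdot]}_{T\oplus S}$ makes $L\oplus M$ a Lie $H$-pseudoalgebra. I then compute it on the three types of pairs:
\begin{align*}
{[(x,0)*(y,0)]}_{T\oplus S}&=({[x*y]}_T,0),\\
{[(x,0)*(0,u)]}_{T\oplus S}&=\big(0,\,T(x)*u+x*S(u)+\lambda(x*u)\big)=(0,\,x*_Tu),\\
{[(0,u)*(0,v)]}_{T\oplus S}&=0 .
\end{align*}
These show three things. First, $L$ is a subalgebra whose induced bracket is ${[\cdot*\cdot]}_T$. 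Second, $M$ is an abelian ideal. Third, the adjoint action of $L$ on $M$ is $\rho^T$. The Jacobi identity of the deformed bracket, evaluated on $(x,0),(y,0),(0,u)$, is then exactly the module axiom ${[x*y]}_T*_Tu=x*_T(y*_Tu)-((12)\otimes_H\mathrm{id}_M)(y*_T(x*_Tu))$. Hence $(M,\rho^T)$ is an $(L,{[\cdot*\cdot]}_T)$-module.

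\textbf{Step 3: the Rota--Baxter compatibility of $S$.} By Proposition~\ref{inducedrb}(2), $T\oplus S$ is a Rota--Baxter operator on the deformed algebra $(L\oplus M,{[\cdot*\cdot]}_{T\oplus S})$, with the same weight that item gives for $T$ on $(L,{[\cdot*\cdot]}_T)$. I will evaluate this identity on the pair $(x,0),(0,u)$ and read off the $M$-component. It gives
\[
T(x)*_TS(u)=(\mathrm{id}_{H^{\otimes 2}}\otimes_H S)\big(T(x)*_Tu+x*_TS(u)+\lambda(x*_Tu)\big),
\]
which is the condition of Definition~\ref{rbrep} for $(M,S,\rho^T)$ over $(L,{[\cdot*\cdot]}_T,T)$. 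This completes the argument.
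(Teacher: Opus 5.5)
Your proof is correct, and it takes a genuinely different route from the paper.

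The paper does not actually prove this statement. It lists the two identities to be checked: the module axiom for $*_T$ over ${[\cdot*\cdot]}_T$, and the compatibility of $S$ with $*_T$. It then says they follow directly from the definitions and omits the details. The intended argument is a brute-force expansion of ${[x*y]}_T*_Tu$, $x*_T(y*_Tu)$ and $T(x)*_TS(u)$ in terms of the original action, using the Rota--Baxter identity for $T$, the condition of Definition~\ref{rbrep}, and the original module axiom.

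You instead observe that deforming the Rota--Baxter semidirect product $(L\ltimes M,T\oplus S)$ gives exactly the semidirect product of $(L,{[\cdot*\cdot]}_T)$ with $(M,\rho^T)$. Both required identities are then the $M$-components of the Jacobi identity and of the Rota--Baxter identity supplied by Proposition~\ref{inducedrb} applied to $L\ltimes M$. Your Step~1 is Proposition~\ref{propsemidirect}, which you sensibly re-verify.

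Your route avoids any new computation and yields a cleaner structural statement. The cost is that it rests entirely on Proposition~\ref{inducedrb}, whose Jacobi part the paper only cites from Liu--Wang. The direct expansion would be self-contained, but it amounts to redoing that Jacobi computation with the third argument in $M$.

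One point to make explicit in Step~3: you need Proposition~\ref{inducedrb}(2) with weight $\lambda$, not \emph{weight zero} as its statement is printed. The proof of that item does establish weight $\lambda$. Weight zero is false in general: it fails for $T=-\lambda\,\mathrm{id}$ on a non-abelian $L$ with $\lambda\neq 0$. So the identity you display in Step~3 is the correct one, but you should cite the weight as $\lambda$ rather than ``the same weight that item gives''.
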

\begin{proof}
To show that $(M, S, \rho^T)$ is a representation of the deformed Rota--Baxter Lie $H$-pseudoalgebra $(L, {[\cdot * \cdot]}_{T}, T)$ with the new action $*_T$, we need to verify the following equations for all $x \in L$, $m \in M$,
\begin{align*}
  ( {[x * y]}_T) *_T u  &=  x *_T (y *_T u) - ((12)\otimes_H \mathrm{id}_M)(y *_T (x *_T u)),\\
   T(x) *_T S(u) &= (\mathrm{id}_{H^{\otimes 2}} \otimes_H S)\Big( T(x) *_T u + x *_T S(u) + \lambda (x *_T u) \Big).
\end{align*}
This follows directly from the definition of the new action $*_T$ and the fact that $(M, S)$ is a representation of $L_T$. The details are omitted due to space constraints.
\end{proof}
\begin{proposition}\label{propsemidirect}
Let $(L_T, {[\cdot * \cdot]}_L)$ be a Rota--Baxter Lie $H$-pseudoalgebra of weight $\lambda$ and let $(M, S)$ be a representation of $L_T$. Then the semidirect product Lie $H$-pseudoalgebra $L \ltimes M$ can be endowed with a Rota--Baxter operator $T \oplus S$ of weight $\lambda$, defined by
\begin{align*}
     (T \oplus S)(x, u) := \Big(T(x), S(u)\Big),\qquad \forall \quad (x,u) \in L \oplus M.
\end{align*}
\end{proposition}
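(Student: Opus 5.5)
We verify the Rota--Baxter identity of weight $\lambda$ for $T\oplus S$ on $L\ltimes M$ directly from the definitions, reducing it componentwise to the Rota--Baxter identity for $T$ on $L$ and the compatibility condition of Definition~\ref{rbrep}. First, $T\oplus S$ is $H$-linear since $T$ and $S$ are and the $H$-module structure on $L\oplus M$ is componentwise. Note also that $(\mathrm{id}_{H^{\otimes 2}}\otimes_H (T\oplus S))$ acts on $H^{\otimes 2}\otimes_H(L\oplus M)=(H^{\otimes2}\otimes_H L)\oplus(H^{\otimes2}\otimes_H M)$ as $(\mathrm{id}\otimes_H T)\oplus(\mathrm{id}\otimes_H S)$, so the identity can be checked separately in the two components.

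For $(x,u),(y,v)\in L\oplus M$ we expand the left-hand side:
\begin{align*}
{[(T(x),S(u)) * (T(y),S(v))]}_{L\ltimes M}=\Big({[T(x)*T(y)]}_L,\ T(x)*S(v)-((12)\otimes_H\mathrm{id}_M)(T(y)*S(u))\Big).
\end{align*}
On the right-hand side we expand ${[(T(x),S(u))*(y,v)]}+{[(x,u)*(T(y),S(v))]}+\lambda{[(x,u)*(y,v)]}$. Its $L$-component is ${[T(x)*y]}_L+{[x*T(y)]}_L+\lambda{[x*y]}_L$, whose image under $\mathrm{id}\otimes_H T$ equals ${[T(x)*T(y)]}_L$ by the Rota--Baxter identity for $T$. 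Its $M$-component, after grouping, is
\begin{align*}
\Big(T(x)*v+x*S(v)+\lambda(x*v)\Big)-((12)\otimes_H\mathrm{id}_M)\Big(T(y)*u+y*S(u)+\lambda(y*u)\Big).
\end{align*}

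The only point requiring care (the mild obstacle) is that $(12)\otimes_H\mathrm{id}_M$ commutes with $\mathrm{id}_{H^{\otimes2}}\otimes_H S$, which holds because one acts on the $H^{\otimes 2}$ factor and the other on $M$ (and both are well defined on $\otimes_H$ since $S$ is $H$-linear and $(12)$ is an $H$-bimodule automorphism of $H^{\otimes 2}$ with respect to the diagonal right action, by cocommutativity). Granting this, applying $\mathrm{id}\otimes_H S$ to the $M$-component and using the compatibility condition of Definition~\ref{rbrep} twice (once for $(x,v)$, once for $(y,u)$) yields $T(x)*S(v)-((12)\otimes_H\mathrm{id}_M)(T(y)*S(u))$, matching the left-hand side. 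Hence $T\oplus S$ is a Rota--Baxter operator of weight $\lambda$ on $L\ltimes M$.
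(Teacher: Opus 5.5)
Your proof is correct and takes the same approach as the paper. Both expand the Rota--Baxter identity for $T\oplus S$ on $L\ltimes M$ componentwise, then use the Rota--Baxter identity for $T$ in the $L$-component and the compatibility condition of Definition~\ref{rbrep} (applied to $(x,v)$ and $(y,u)$) in the $M$-component. Your explicit check that $(12)\otimes_H\mathrm{id}_M$ is well defined (by cocommutativity) and commutes with $\mathrm{id}_{H^{\otimes 2}}\otimes_H S$ makes precise a step the paper's computation uses without comment.
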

\begin{proof}
To show that $T \oplus S$ is a Rota--Baxter operator of weight $\lambda$ on $L \ltimes M$, we need to verify that for all $(x, u), (y, v) \in L \oplus M$,
\begin{align*}
     &{\Big[(T \oplus S)(x, u) * (T \oplus S)(y, v)\Big]}_{L \ltimes M}\\ &= (\mathrm{id}_{H^{\otimes 2}} \otimes_H (T \oplus S))\Big( {[(T \oplus S)(x, u) * (y, v)]}_{L \ltimes M} + {[(x, u) * (T \oplus S)(y, v)]}_{L \ltimes M} + \lambda {[(x, u) * (y, v)]}_{L \ltimes M} \Big).
\end{align*}
The left-hand side is
\begin{align}
    {\Big[(T(x), S(u)) * (T(y), S(v))\Big]}_{L \ltimes M} = \Big({[T(x) * T(y)]}_L, T(x) * S(v) - ((12)\otimes_H \mathrm{id}_M)(T(y) * S(u))\Big).
\end{align}
For the right-hand side, we first compute
\begin{align*}
    {\Big[(T(x), S(u)) * (y, v)\Big]}_{L \ltimes M} &= \Big({[T(x) * y]}_L, T(x) * v - ((12)\otimes_H \mathrm{id}_M)(y * S(u))\Big), \\
    {\Big[(x, u) * (T(y), S(v))\Big]}_{L \ltimes M} &= \Big({[x * T(y)]}_L, x * S(v) - ((12)\otimes_H \mathrm{id}_M)(T(y) * u)\Big), \\
    \lambda {\Big[(x, u) * (y, v)\Big]}_{L \ltimes M} &= \lambda \Big({[x * y]}_L, x * v - ((12)\otimes_H \mathrm{id}_M)(y * u)\Big).
\end{align*}
Adding these together and applying $(\mathrm{id}_{H^{\otimes 2}} \otimes_H (T \oplus S))$ gives
\begin{align*}
      &(\mathrm{id}_{H^{\otimes 2}} \otimes_H T)\Big( {[T(x) * y]}_L + {[x * T(y)]}_L + \lambda {[x * y]}_L, \\& T(x) * v + x * S(v) + \lambda x * v - ((12)\otimes_H \mathrm{id}_M)(y * S(u) + T(y) * u + \lambda y * u) \Big)\\&=(\mathrm{id}_{H^{\otimes 2}} \otimes_H T)\Big( {[x * y]}_T,  x*_T v - ((12)\otimes_H \mathrm{id}_M)(y * _T u) \Big)\\&=\Big( (\mathrm{id}_{H^{\otimes 2}} \otimes_H T){[x * y]}_T, (\mathrm{id}_{H^{\otimes 2}} \otimes_H S) (x*_T v - ((12)\otimes_H \mathrm{id}_M)y * _T u) \Big)
      \\&=\Big({[T(x) * T (y)]}_L, T(x) *_T S(v) - ((12)\otimes_H \mathrm{id}_M)(T(y) *_T S(u)) \Big).
\end{align*}This completes the proof.
\end{proof}

\section{Cohomology of Rota--Baxter Lie $H$-Pseudoalgebras}\label{sec:cohomology}
\label{sec:RB-cohomology}
In this section, we develop a cohomology of  $\lambda$-weighted relative Rota--Baxter Lie $H$-pseudoalgebras, following the Maurer--Cartan approach introduced in \cite{Das2022}. Let $L$ and $M$ be Lie $H$-pseudoalgebras, and suppose $M$ is a module over $L$ by an action $\rho$ defined by $[x * u]$ for $x \in L$, $u \in M$. We assume that the action $\rho$ is given by derivations, i.e., for all $x \in L$ and $u, v \in M$,
\begin{align}\label{eq:derivation-condition}
    \rho(x, {[u * v]}_M )= {[\rho(x, u) * v]}_M + ((12)\otimes_Hid){[u * \rho(x, v)]}_M.
\end{align}This ensures that $(L, M)$ forms a matched pair of Lie $H$-pseudoalgebras.
A $\lambda$-weighted relative Rota--Baxter operator $T : M \to L$ is an $H$-linear map satisfying the identity
  $ { [T(u) * T(v)]}_L = (\mathrm{id}_{H^{\otimes 2}} \otimes_H T)\big(\rho(T(u),v )- \rho(T(v),u) + \lambda {[u * v]}_M \big), \quad \forall u,v \in M,$
where $\rho :L \otimes M \to H^{\otimes 2} \otimes_H M$ is the representation map encoding the $L$-module structure on $M$. 
Our goal is to define a cohomology theory for such an operator $T$ that gives infinitesimal and formal deformations. We proceed in two steps.  

\begin{enumerate}
    \item First is to construct a differential graded Lie algebra (dgLA) whose Maurer--Cartan elements are precisely $\lambda$-weighted relative Rota--Baxter operators.
    \item Second is to use the twisting procedure to obtain a cochain complex $(C^\bullet_T(M,L), d_T)$, whose cohomology $H^\bullet_T(M,L)$ controls the deformation theory of $T$.
\end{enumerate}
This framework generalizes the Chevalley–Eilenberg cohomology of Lie $H$-pseudoalgebras.
\subsection{Maurer--Cartan Characterization and Cohomology}
\label{subsec:MC-characterization}
In this subsection, we construct a differential graded Lie algebra (dgLA) whose Maurer--Cartan elements are precisely $\lambda$-weighted relative Rota--Baxter operators of Lie $H$-pseudoalgebras. Using this characterization, we define the cohomology of such an operator. Recall that a Maurer--Cartan element in a differential graded Lie algebra (dgLA) is defined as follows.
 Let $(L = \bigoplus_{k=0}^{\infty}
L_k, [\cdot,\cdot], d) $ be a differential graded Lie algebra. A degree $1$ element $\theta\in L_1$ is called a Maurer--Cartan element of $L$ if it satisfies the Maurer--Cartan
equation $d\theta +\frac{1}{2}
[\theta,\theta ] = 0.$
A graded Lie algebra is a differential graded Lie algebra with $d = 0.$ Then we have

\begin{proposition}  Let  $(L = \bigoplus_{k=0}^{\infty}
L_k, [\cdot,\cdot]) $ be a graded Lie algebra and let $\theta\in L_1$ be a Maurer--Cartan element. Then the Maurer--Cartan element $\theta$ induces a  map $d_\theta : L \to L,$  defined by $d_\theta (u) := [\theta, u], \quad \forall u \in L.$ We call it
  differential on $L$. For any $v \in L_1,$ the sum $\theta + v $ is a Maurer--Cartan element of the graded Lie algebra $(L, [\cdot,\cdot])$ if and only if $v $ is a Maurer--Cartan element of the differential graded Lie algebra $(L, [\cdot,\cdot], d_\theta)$.
\end{proposition}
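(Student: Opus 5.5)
This is the standard twisting argument. It uses only the axioms of a graded Lie algebra: graded skew-symmetry $[a,b]=-(-1)^{|a||b|}[b,a]$ and the graded Jacobi identity $[a,[b,c]]=[[a,b],c]+(-1)^{|a||b|}[b,[a,c]]$. Here $(L,[\cdot,\cdot])$ has zero differential, so the Maurer--Cartan equation for $\theta\in L_1$ reads simply $[\theta,\theta]=0$.

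\textbf{Step 1: $d_\theta$ is a degree-$1$ derivation.} Since $\theta$ has degree $1$, the map $d_\theta=[\theta,\cdot]$ raises degree by one. The graded Jacobi identity with $a=\theta$ gives
\[
d_\theta[u,v]=[d_\theta u,v]+(-1)^{|u|}[u,d_\theta v],
\]
so $d_\theta$ is a graded derivation of degree $1$.

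\textbf{Step 2: $d_\theta^2=0$.} Apply the Jacobi identity with $a=b=\theta$ and $c=u$:
\[
[\theta,[\theta,u]]=[[\theta,\theta],u]-[\theta,[\theta,u]].
\]
Hence $2\,d_\theta^2(u)=[[\theta,\theta],u]=0$, using $[\theta,\theta]=0$. Because $\operatorname{char}\mathbb k=0$, we conclude $d_\theta^2=0$. Therefore $(L,[\cdot,\cdot],d_\theta)$ is a differential graded Lie algebra, which justifies calling $d_\theta$ a differential.

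\textbf{Step 3: the Maurer--Cartan equivalence.} Let $v\in L_1$ and expand using bilinearity. Graded skew-symmetry on degree-$1$ elements gives $[v,\theta]=[\theta,v]$, so
\[
[\theta+v,\theta+v]=[\theta,\theta]+2[\theta,v]+[v,v]=2\Big(d_\theta v+\tfrac12[v,v]\Big).
\]
Thus $\theta+v$ satisfies $[\theta+v,\theta+v]=0$ if and only if $d_\theta v+\frac12[v,v]=0$. The latter is exactly the statement that $v$ is a Maurer--Cartan element of $(L,[\cdot,\cdot],d_\theta)$.

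The only delicate points are the sign conventions: the symmetry $[v,\theta]=[\theta,v]$ in degree one, and the sign in Step 2. Both follow directly from the graded axioms, so I expect no real obstacle.
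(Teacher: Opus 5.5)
Your proof is correct and complete. The paper states this proposition without proof, as a recalled standard fact, so there is no argument of its own to compare against.

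The only related computation in the paper is the bilinear expansion in the proof of Theorem~\ref{thm:deformation-MC}. It is the same as your Step~3, the identity $[\theta+v,\theta+v]=2\big(d_\theta v+\tfrac12[v,v]\big)$ using $[v,\theta]=[\theta,v]$ in degree one, carried out there in the dg setting. Your Steps~1--2 show that $d_\theta$ is a degree-one derivation with $2d_\theta^2(u)=[[\theta,\theta],u]=0$. This supplies what the paper only asserts when it calls $d_\theta$ a differential, and it is needed for $(L,[\cdot,\cdot],d_\theta)$ to be a dgLA at all.
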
 
Let $(M, \rho)$ be a representation of a Lie $H$-pseudoalgebra $L$, where $\rho:L \otimes M \to H^{\otimes 2} \otimes_H M$ denotes the action map. Consider the graded space $  C^\bullet(M, L) := \bigoplus_{k=0}^{\infty} Hom_{H^{\otimes k}}(M^{\boxtimes k}, H^{\otimes k} \otimes_H L),$ 
with the convention that $M^{\boxtimes 0} = \mathbb{k}$ and the $0$-cochains are given by $C^0(M, L) = L$. This space carries a degree $0$ skew-symmetric bracket operation
\[ \llbracket \cdot, \cdot \rrbracket : Hom_{H^{\otimes m}}(M^{\boxtimes m}, H^{\otimes m} \otimes_H L) \times Hom_{H^{\otimes n}}(M^{\boxtimes n}, H^{\otimes n} \otimes_H L) \longrightarrow Hom_{H^{\otimes (m+n)}}(M^{\boxtimes (m+n)}, H^{\otimes (m+n)} \otimes_H L)
\]
defined by
\begin{equation}
    \begin{aligned}\label{derivedbracket}
    \llbracket P, Q \rrbracket &(u_1, \dots, u_{m+n}) = \\
    &\sum_{\sigma \in S(n,1,m-1)} (-1)^\sigma (\sigma \otimes_H P)\Big( \rho\big(Q(u_{\sigma(1)}, \dots, u_{\sigma(n)}), u_{\sigma(n+1)}\big), u_{\sigma(n+2)}, \dots, u_{\sigma(n+m)} \Big) \\
    &- (-1)^{mn} \sum_{\sigma \in S(m,1,n-1)} (-1)^\sigma (\sigma \otimes_H Q) \Big( \rho\big(P(u_{\sigma(1)}, \dots, u_{\sigma(m)}), u_{\sigma(m+1)}\big), u_{\sigma(m+2)}, \dots, u_{\sigma(m+n)} \Big) \\
    &+ (-1)^{mn} \sum_{\sigma \in S(m,n)} (-1)^\sigma (\sigma\otimes_Hid){\big[P(u_{\sigma(1)}, \dots, u_{\sigma(m)})* Q(u_{\sigma(m+1)}, \dots, u_{\sigma(m+n)})\big]}_L,
    \end{aligned}
\end{equation}
for all $P \in Hom_{H^{\otimes m}}(M^{\boxtimes m}, H^{\otimes m} \otimes_H L)$, $Q \in Hom_{H^{\otimes n}}(M^{\boxtimes n}, H^{\otimes n} \otimes_H L)$, and $u_1,\dots,u_{m+n} \in M$. Here $S(n,m)$ denotes the set of $(n,m)$-unshuffles, that is permutations $\sigma$ of $\{1,\dots,m+n\}$ such that
    \[  \sigma(1) < \cdots < \sigma(n), \quad \sigma(n+1) < \cdots < \sigma(n+m);
    \] and $S(n,1,m-1)$ is the set of permutations $\sigma$ of $\{1,\dots,n+m\}$ satisfying
    \[
        \sigma(1) < \cdots < \sigma(n), \quad \sigma(n+1), \quad \sigma(n+2) < \cdots < \sigma(n+m).
    \] whereas $(-1)^\sigma$ is the Koszul sign associated to the permutation $\sigma$, determined by the grading in the pseudotensor category $M^*(H)$.
    
   Note that for all $x,y \in L$, we identify ${[x * y]}_L$ with the image of the pseudobracket $\pi_L(x,y) \in H^{\otimes 2} \otimes_H L$.
   Furthermore, we have the following key result.
\begin{theorem}\label{thm:MC-Rota--Baxter}
    With the above notations, $(C^\bullet(M, L), \llbracket \cdot, \cdot \rrbracket)$ is a graded Lie algebra. Its Maurer--Cartan elements of degree $1$ are precisely the $\lambda$-weighted relative Rota--Baxter operators $T: M \to L$. When $\lambda = 0$, it corresponds to $\mathcal{O}$-operators or (relative Rota--Baxter operators of weight $0$) on $L$ with respect to the representation $(M, \rho)$.
\end{theorem}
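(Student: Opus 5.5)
The plan is to realize $\llbracket\cdot,\cdot\rrbracket$ as a derived bracket, following \cite{Das2022}, so that the graded Jacobi identity comes for free, and then to read off the Maurer--Cartan equation by a direct computation in degree $1$. Consider the semidirect-type Lie $H$-pseudoalgebra $L\oplus M$. Its pseudobracket is
\[
[(x,u)*(y,v)]=\big([x*y]_L,\ \rho(x,v)-((12)\otimes_H\mathrm{id})\rho(y,u)+\lambda[u*v]_M\big).
\]
This is a Lie $H$-pseudoalgebra precisely because of the derivation condition \eqref{eq:derivation-condition}; the factor $\lambda$ is harmless since $\lambda[\cdot*\cdot]_M$ is again a Lie pseudobracket compatible with $\rho$. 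Denote the structure by $\pi=\pi_L+\pi_\rho+\lambda\pi_M\in C^2(L\oplus M,L\oplus M)$.

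On the pseudo-analogue of the Nijenhuis--Richardson graded Lie algebra $\big(\bigoplus_k C^{k+1}(L\oplus M,L\oplus M),[\cdot,\cdot]_{NR}\big)$, the Lie pseudoalgebra condition says $[\pi,\pi]_{NR}=0$. The composition of pseudo-multilinear maps in $M^*(H)$ is associative in the operadic sense, so the pre-Lie identity for $\circ$ and hence the graded Jacobi identity for $[\cdot,\cdot]_{NR}$ hold exactly as in the classical case. The only change is that each permutation acts simultaneously on the arguments and on the $H^{\otimes n}$ factor, via $\sigma\otimes_H(\cdot)$.

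The subspace $\bigoplus_k\mathrm{Hom}_{H^{\otimes k}}(M^{\boxtimes k},H^{\otimes k}\otimes_H L)$ is abelian for $[\cdot,\cdot]_{NR}$, since outputs in $L$ are never fed into inputs from $M$. It is therefore an abelian subalgebra complementary to a subalgebra, and Voronov's derived bracket construction applies. It gives the graded Lie bracket
\[
\llbracket P,Q\rrbracket=(-1)^{m}[[\pi,P]_{NR},Q]_{NR},
\]
with the sign chosen to match conventions. Expanding this iterated bracket term by term, the $\pi_\rho$ part produces the first two sums in \eqref{derivedbracket}, and the $\pi_L$ part produces the third sum. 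The $\pi_M$ part gives nothing, because it would need $L$-valued inputs.

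The main obstacle is this identification, namely checking the signs and the $H^{\otimes(m+n)}$-permutations $(\sigma\otimes_H\cdot)$ term by term. If the derived-bracket route proves too delicate in the pseudotensor setting, the fallback is to verify skew-symmetry and Jacobi directly. Skew-symmetry is immediate from the $(-1)^{mn}$ symmetry of the formula together with skew-symmetry of $[\cdot*\cdot]_L$. Jacobi needs the Jacobi identity of $L$, the representation axiom for $\rho$, and the derivation property \eqref{eq:derivation-condition}, with the terms grouped by type.

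This is also the point where the $\lambda$-term must be reconciled. For $T\in C^1$ of degree $1$, formula \eqref{derivedbracket} gives
\[
\llbracket T,T\rrbracket(u,v)=2\Big((\mathrm{id}\otimes_H T)\big(\rho(T(u),v)-((12)\otimes_H\mathrm{id})\rho(T(v),u)\big)-[T(u)*T(v)]_L\Big)
\]
up to overall sign. The weight term $\lambda(\mathrm{id}\otimes_H T)[u*v]_M$ has to be incorporated. I would do this by adding to $\llbracket\cdot,\cdot\rrbracket$ the contribution of $\lambda\pi_M$ in the derived bracket, or equivalently by regarding $\rho$ as the action on the Lie pseudoalgebra $(M,\lambda[\cdot*\cdot]_M)$.

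With that in place, $\frac12\llbracket T,T\rrbracket=0$ for $H$-linear $T$ is exactly the relative Rota--Baxter identity of weight $\lambda$, which gives the Maurer--Cartan characterization. Setting $\lambda=0$ makes the $M$-bracket irrelevant and recovers $\mathcal O$-operators with respect to $(M,\rho)$.
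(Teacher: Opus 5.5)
Your derived-bracket construction of $\llbracket\cdot,\cdot\rrbracket$ is fine and matches the paper's. The gap is in the Maurer--Cartan step. You correctly note that $\lambda\pi_M$ contributes nothing to the binary bracket, so $\llbracket\cdot,\cdot\rrbracket$ does not depend on $\lambda$. Consequently both of your proposed fixes (``adding the contribution of $\lambda\pi_M$ in the derived bracket'' and ``regarding $\rho$ as an action on $(M,\lambda[\cdot*\cdot]_M)$'') add exactly zero. You are then left claiming that $\tfrac12\llbracket T,T\rrbracket=0$ is the weight-$\lambda$ identity, and no choice of bilinear bracket can make that true. The weight term $\lambda(\mathrm{id}_{H^{\otimes 2}}\otimes_H T)[u*v]_M$ is linear in $T$, while $\llbracket T,T\rrbracket$ is quadratic. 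Concretely, the solutions of $\llbracket T,T\rrbracket=0$ are stable under $T\mapsto cT$, but weight-$\lambda$ operators are not. Take $M=L$ with the adjoint action and any nonabelian $L$. Then $T=-\lambda\,\mathrm{id}$ satisfies the identity, whereas $2T$ gives $4\lambda^2[u*v]_L$ on the left and $6\lambda^2[u*v]_L$ on the right. So for $\lambda\neq 0$, no graded Lie algebra with zero differential has these operators as its Maurer--Cartan set.

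The missing ingredient is a differential, and your own set-up already contains it. Let $\mathfrak a=\bigoplus_k\mathrm{Hom}_{H^{\otimes k}}(M^{\boxtimes k},H^{\otimes k}\otimes_H L)$; for $P\in\mathfrak a$, the bracket $[\lambda\pi_M,P]_{NR}$ is not zero. The composite $\pi_M\circ P$ vanishes, which is all your ``$L$-valued inputs'' argument sees. But $P\circ\pi_M$, i.e.\ $P$ evaluated on $\lambda[u_i*u_j]_M$, lies back in $\mathfrak a$. It drops out of $\llbracket P,Q\rrbracket$ only because it is then bracketed with $Q$ inside the abelian $\mathfrak a$. This term is the unary Voronov bracket $d=\mathrm{pr}_{\mathfrak a}[\pi,\cdot]_{NR}$, which on $\mathfrak a$ equals $[\lambda\pi_M,\cdot]_{NR}$ (the paper uses $-\lambda\pi_M$ to fix signs). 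Up to sign convention, $(dT)(u,v)=(\mathrm{id}_{H^{\otimes 2}}\otimes_H T)(\lambda[u*v]_M)$. The condition $[\pi,\pi]_{NR}=0$ splits by input type into $[\pi_L+\rho,\pi_L+\rho]_{NR}=0$, $[\pi_L+\rho,\pi_M]_{NR}=0$ (this is where \eqref{eq:derivation-condition} genuinely enters) and $[\pi_M,\pi_M]_{NR}=0$. Voronov's theorem then gives $d^2=0$, $d$ a derivation of $\llbracket\cdot,\cdot\rrbracket$, and a vanishing ternary bracket. So you get a dgLA $(C^\bullet(M,L),\llbracket\cdot,\cdot\rrbracket,d)$. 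Its Maurer--Cartan equation $dT+\tfrac12\llbracket T,T\rrbracket=0$ is what reproduces the weight-$\lambda$ relative Rota--Baxter identity, exactly as in the paper's proof. Only at $\lambda=0$, where $d=0$, does the Maurer--Cartan characterization hold in the graded Lie algebra itself, giving the $\mathcal O$-operators.
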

\begin{proof}
The structure arises via the derived bracket construction associated with the Nijenhuis--Richardson bracket in the category of Lie $H$-pseudoalgebras. Let $W = L \oplus M$ be the direct sum $H$-module. Consider the graded space \begin{align*}
    \mathfrak{g} = \bigoplus_{k=0}^{\dim W} Hom_{H^{\otimes k}}(W^{\boxtimes k}, H^{\otimes k} \otimes_H W),
\end{align*} equipped with the Nijenhuis--Richardson bracket ${[\cdot,\cdot]}_{NR}$ of degree $-1$, defined by \begin{align*}
    {[f,g]}_{NR} = f \diamond g - (-1)^{(m-1)(n-1)} (\mathrm{id}_{H^{\otimes m}} \otimes_H g) \diamond f,
\end{align*} defined by
 {\begin{align*}
      (f \diamond g)(w_1, \dots, w_{m+n-1}) = \sum_{\sigma \in S(n,m-1)} (-1)^\sigma (\sigma\otimes_Hid)f\left(g(w_{\sigma(1)}, \dots, w_{\sigma(n)}), w_{\sigma(n+1)}, \dots, w_{\sigma(m+n-1)}\right).
  \end{align*}}
  for $f \in Hom_{H^{\otimes m}}(W^{\boxtimes m}, H^{\otimes m} \otimes_H W)$, $g \in Hom_{H^{\otimes n}}(W^{\boxtimes n}, H^{\otimes n} \otimes_H W)$.  Denote by $\pi_L \in  Hom_{H^{\otimes 2}}(L^{\boxtimes 2}, H^{\otimes 2} \otimes_H L)$ the pseudobracket on $L$, and let $\rho \in Hom_{H^{\otimes 2}}(L \boxtimes M, H^{\otimes 2} \otimes_H M)$ encode the action of $L$ on $M$. Then the element $\mu_0 = \pi_L + \rho \in \mathfrak{g}_1$ satisfies the Maurer--Cartan equation $ {[\mu_0, \mu_0]}_{NR} = 0$ if and only if $L$ is a Lie $H$-pseudoalgebra and $M$ is an $L$-module. Moreover, under the derivation condition \eqref{eq:derivation-condition}, the action $\rho$ is by derivations of the Lie $H$-pseudoalgebra structure on $M$. This implies that the Nijenhuis--Richardson bracket between $\mu_0 = \pi_L + \rho$ and $\pi_M$ vanishes:
\begin{align*}
   { [\mu_0, \pi_M]}_{NR} ={ [\pi_L, \pi_M]}_{NR} + {[\rho, \pi_M]}_{NR} = 0.
\end{align*}
Indeed, ${[\pi_L, \pi_M]}_{NR} = 0$ because $\pi_L$ and $\pi_M$ act on disjoint components ($L$ and $M$, respectively), and ${[\rho, \pi_M]}_{NR} = 0$ precisely encodes the derivation property \eqref{eq:derivation-condition}. Consequently, the elements $\mu_0$ and $-\lambda \pi_M$ commute in $(\mathfrak{g}, {[\cdot,\cdot]}_{NR})$, and the operator $d := {[-\lambda \pi_M, \cdot]}_{NR}$ defines a differential that preserves the subspace $\mathfrak{a}$ and is compatible with the derived bracket $\llbracket \cdot, \cdot \rrbracket$.

This induces a square-zero differential $d_{\mu_0} = {[\mu_0, -]}_{NR}$ on $\mathfrak{g}$. Now consider the subspace
$$\mathfrak{a} = \bigoplus_{k=0}^{\infty} Hom_{H^{\otimes k}}(M^{\boxtimes k}, H^{\otimes k} \otimes_H L) \subset \mathfrak{g}.$$
It is straightforward to verify that $\mathfrak{a}$ is an abelian subalgebra, i.e., ${[f,g]}_{NR} = 0$ for all $f,g \in \mathfrak{a}$, due to disjoint domains. By the derived bracket construction by Voronov in \cite{Voronov2005}, the shifted space $\mathfrak{a}[-1]$ carries a graded Lie bracket defined by
 \begin{align*}
      \llbracket P, Q \rrbracket := (-1)^m {[{[\mu_0, P]}_{NR}, Q]}_{NR}, \quad P \in \mathfrak{a}[-1]^m,\ Q \in \mathfrak{a}[-1]^n.
 \end{align*}
This coincides exactly with the derived bracket in Eq. \eqref{derivedbracket}. Therefore, $(C^\bullet(M,L), \llbracket \cdot,\cdot \rrbracket)$ is a graded Lie algebra.

Next, we introduce the weight $\lambda\in \mathbb{k}$  by the element $-\lambda \pi_M \in Hom_{H^{\otimes 2}}(M^{\boxtimes 2}, H^{\otimes 2} \otimes_H M)$. This is also a Maurer--Cartan element, because ${[\pi_M, \pi_M]}_{NR} = 0$ 
. It induces a differential by\begin{align*} d_{-\lambda \pi_M} := {[-\lambda \pi_M,\cdot]}_{NR}.
\end{align*} This differential preserves the subspace $\mathfrak{a}$, since the bracket ${[-\lambda \pi_M, f]}_{NR}$ remains in $Hom(-, H^{\otimes \bullet} \otimes_H L)$ for $f \in \mathfrak{a}$. We denote the restriction of this differential to $\mathfrak{a}$ by $ d : \mathfrak{a} \to \mathfrak{a},$ and its explicit action on $f \in Hom_{H^{\otimes n}}(M^{\boxtimes n}, H^{\otimes n} \otimes_H L)$ is given by
\begin{align}
    (df)(u_1, \dots, u_{n+1}) = (-1)^{n-1} \sum_{i=1}^{n} (-1)^{i-1} f(u_1, \dots, u_{i-1}, \lambda ({[u_i * u_{i+1}]}_M), u_{i+2}, \dots, u_{n+1}).
\end{align}
In particular, if $n=1$, we have $(df)(u_1,  u_{2}) =  f(  \lambda ({[u_1 * u_{2}]}_M)) $  

Finally, the elements $\mu_0=\pi_L + \rho$ and $-\lambda \pi_M$ satisfy the 
compatibility condition $ {[\pi_L + \rho, -\lambda \pi_M]}_{NR} = 0$. This holds because their domains do not interact nontrivially, i.e., $\pi_L + \rho$ acts between $L$ and $M$, while $-\lambda \pi_M$ acts entirely within $M$. As a result, the differential $d$ is a graded derivation for the derived bracket $\llbracket \cdot,\cdot \rrbracket$, meaning that for homogeneous $P, Q \in \mathfrak{a}[-1]$,
\begin{align*}
     d\llbracket P, Q\rrbracket= \llbracket dP, Q\rrbracket+ (-1)^m \llbracket P, dQ\rrbracket, \quad \text{for } P \in \mathfrak{a}[-1]^m.
\end{align*}
Therefore, the triple $   \left( \mathfrak{a}[-1], \llbracket \cdot,\cdot\rrbracket, d \right)$ forms a differential graded Lie algebra (dgLA).

Now, let $T \in Hom_H (M,L) $ be a degree $1$ element, then derived bracket in Eq. \eqref{derivedbracket} is given by
\begin{equation}\begin{aligned}\label{derivedbracketrota}\llbracket T, T \rrbracket (u_1, u_2) 
    &= \sum_{\sigma \in S(1,1)} (-1)^\sigma (\sigma \otimes_H T) \Big( \rho\big(T(u_{\sigma(1)}), u_{\sigma(2)}\big) \Big) \\
    &\quad + \sum_{\sigma \in S(1,1)} (-1)^\sigma (\sigma \otimes_H T) \Big( \rho\big(T(u_{\sigma(1)}), u_{\sigma(2)}\big) \Big) \\
    &\quad - \sum_{\sigma \in S(1,1)} (-1)^\sigma (\sigma\otimes_Hid)\big{[T(u_{\sigma(1)}) * T(u_{\sigma(2)})\big]}_L
    \\ &= 2\Big((\mathrm{id}_{H^{\otimes 2}} \otimes_H T)(\rho(T(u_1), u_2)) - ((12) \otimes_H T)(\rho(T(u_2), u_1))\Big) \\
&\quad - \Big({[T(u_1) * T(u_2)]}_L - ((12)\otimes_Hid){[T(u_2) * T(u_1)]}_L\Big)
\end{aligned}
\end{equation}

and $d_{\mu_0} T $ after restriction to subspace $\mathfrak{a}$ is given by 
\begin{align}\label{restricteddifferentialrota}
   d_{-\l\pi_M} T (u_1, u_2)={[ -\l\pi_M, T]}_{NR}(u_1, u_2) = (\mathrm{id}_{H^{\otimes 2}} \otimes_H T)(\lambda {[u_1 * u_{2}]}_M) .
\end{align}
Then $T$ is a Maurer--Cartan element if it satisfies
\begin{align*}
 d_{ -\lambda\pi_M} T+ \frac{1}{2}\llbracket T, T \rrbracket(u_1,u_2) &=(\mathrm{id}_{H^{\otimes 2}} \otimes_H T)(\lambda {[u_1 *u_2]}_M) \\&+\Big( (\mathrm{id}_{H^{\otimes 2}} \otimes_H T)\rho(T(u_1),u_2) - ((12)\otimes_H T)\rho(T(u_2),u_1 )- {[T(u_1)* T(u_2)]}_L \Big) =0.
\end{align*} 
This is precisely a weighted relative Rota--Baxter operator. Thus, the Maurer--Cartan elements in $(C^\bullet(M, L), \llbracket \cdot,\cdot \rrbracket)$ are exactly the weighted relative Rota--Baxter operator $T: M \to L$ with respect to the representation $(M, \rho)$.
\end{proof}
\subsection{Cohomology of $\lambda$-Weighted Relative Rota--Baxter Operators}Given a $\lambda$-weighted relative Rota--Baxter operator $T : M \to L$, Theorem~\ref{thm:MC-Rota--Baxter} establishes that $T$ corresponds to a Maurer--Cartan element in the differential graded Lie algebra $(C^\bullet(M,L), \llbracket \cdot, \cdot \rrbracket, d)$. This allows us to twist the dgLA by $T$ and define its cohomology.
\begin{definition}
The \textbf{cochain complex of $T$} is defined on the graded space
\[ C^\bullet_T(M, L) := \bigoplus_{n \geq 0} Hom_{H^{\otimes n}}(M^{\boxtimes n}, H^{\otimes n} \otimes_H L),
\] with $C^0_T(M, L) = L$, and differential
  $d_T := d + \llbracket T,.\rrbracket.$ 
\end{definition}
Since $T$ satisfies the Maurer--Cartan equation, we have $d_T^2 = 0$, that yields  a cochain complex $(C^\bullet_T(M, L), d_T)$.
\begin{definition}
The \textbf{$n$-th cohomology group} of the $\lambda$-weighted relative Rota--Baxter operator $T$ is
\[
    H^n_T(M, L) := \frac{Z^n_T(M, L)}{B^n_T(M, L)}, \quad \text{where }
    Z^n_T(M, L) = ker(d_T), \quad B^n_T(M, L) = im(d_T).
\]
The total cohomology is denoted by $H^\bullet_T(M, L)$.
\end{definition}
Moreover, the twisted differential $d_T$ upgrades the graded Lie algebra $(\mathfrak{a}[-1], \llbracket\cdot,\cdot\rrbracket)$ to a new differential graded Lie algebra $(\mathfrak{a}[-1], \llbracket\cdot,\cdot\rrbracket, d_T)$.
\begin{theorem}
\label{thm:deformation-MC}
Let $T : M \to L$ be a $\lambda$-weighted relative Rota--Baxter operator. For any linear map $T' : M \to L$, the sum $T + T'$ is also a $\lambda$-weighted relative Rota--Baxter operator if and only if $T'$ is a Maurer--Cartan element in the twisted dgLA $(\mathfrak{a}[-1], \llbracket \cdot,\cdot\rrbracket, d_T)$, i.e.,
\begin{align*}
     d_T T' + \frac{1}{2} \llbracket T', T'\rrbracket = 0.
\end{align*}
\end{theorem}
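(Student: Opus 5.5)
The plan is a formal expansion of the Maurer--Cartan equation. I will use Theorem~\ref{thm:MC-Rota--Baxter} as the characterization: an $H$-linear map $R\in C^1(M,L)=Hom_H(M,L)$ is a $\lambda$-weighted relative Rota--Baxter operator if and only if
\begin{align*}
dR+\tfrac12\llbracket R,R\rrbracket=0,
\end{align*}
where $d=d_{-\lambda\pi_M}$ is the differential on $\mathfrak{a}[-1]$. I read $T'$ as an $H$-linear map, so that it lies in $C^1(M,L)$. This is needed for $T+T'$ to make sense in the relevant category in any case.

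The first step is to apply the characterization to $R=T+T'$ and expand using bilinearity. The differential $d$ is linear, and $\llbracket\cdot,\cdot\rrbracket$ is bilinear and graded skew-symmetric. For degree-one elements the latter gives $\llbracket T',T\rrbracket=\llbracket T,T'\rrbracket$, since the sign is $-(-1)^{1\cdot1}=+1$. Hence
\begin{align*}
d(T+T')+\tfrac12\llbracket T+T',T+T'\rrbracket
=\Big(dT+\tfrac12\llbracket T,T\rrbracket\Big)+\Big(dT'+\llbracket T,T'\rrbracket+\tfrac12\llbracket T',T'\rrbracket\Big).
\end{align*}
The first bracketed term vanishes because $T$ is itself a Maurer--Cartan element. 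By definition $d_T=d+\llbracket T,\cdot\rrbracket$, so the second bracketed term is exactly $d_TT'+\frac12\llbracket T',T'\rrbracket$. Therefore $T+T'$ is Maurer--Cartan in $(\mathfrak{a}[-1],\llbracket\cdot,\cdot\rrbracket,d)$, that is, a $\lambda$-weighted relative Rota--Baxter operator, if and only if $T'$ satisfies the Maurer--Cartan equation for $d_T$. This is the abstract twisting statement, the dg analogue of the earlier Proposition on $d_\theta$.

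The second step is a check that $(\mathfrak{a}[-1],\llbracket\cdot,\cdot\rrbracket,d_T)$ is genuinely a dgLA, so that the phrase ``Maurer--Cartan element in the twisted dgLA'' is meaningful. That $d_T$ is a graded derivation follows because $d$ is one (Theorem~\ref{thm:MC-Rota--Baxter}) and $\llbracket T,\cdot\rrbracket$ is one by the graded Jacobi identity. For $d_T^2=0$, I will compute, for any $P$,
\begin{align*}
d_T^2P=d^2P+d\llbracket T,P\rrbracket+\llbracket T,dP\rrbracket+\llbracket T,\llbracket T,P\rrbracket\rrbracket.
\end{align*}
Here $d^2=0$. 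The derivation property gives $d\llbracket T,P\rrbracket=\llbracket dT,P\rrbracket-\llbracket T,dP\rrbracket$, and the Jacobi identity gives $\llbracket T,\llbracket T,P\rrbracket\rrbracket=\frac12\llbracket\llbracket T,T\rrbracket,P\rrbracket$. Together these yield
\begin{align*}
d_T^2P=\Big\llbracket dT+\tfrac12\llbracket T,T\rrbracket,\,P\Big\rrbracket=0.
\end{align*}

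I do not expect a genuine obstacle, since the argument is purely formal once Theorem~\ref{thm:MC-Rota--Baxter} is granted. The only points needing care are the sign conventions. First, the symmetry $\llbracket T',T\rrbracket=\llbracket T,T'\rrbracket$ in degree one must be consistent with the shift to $\mathfrak{a}[-1]$. Second, the normalization of $\frac12\llbracket T,T\rrbracket$ must match the one used in the explicit computation of \eqref{derivedbracketrota}. I will verify both directly from the formula \eqref{derivedbracket} with $m=n=1$.
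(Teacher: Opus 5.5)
Your proposal is correct and follows the paper's proof: you apply the Maurer--Cartan characterization of Theorem~\ref{thm:MC-Rota--Baxter} to $T+T'$, expand by bilinearity using $\llbracket T',T\rrbracket=\llbracket T,T'\rrbracket$ in degree one, and cancel $dT+\frac12\llbracket T,T\rrbracket=0$ to leave $d_TT'+\frac12\llbracket T',T'\rrbracket$. Your additional computation $d_T^2P=\llbracket dT+\frac12\llbracket T,T\rrbracket,P\rrbracket=0$ correctly justifies the claim $d_T^2=0$, which the paper only asserts just before the theorem, so it supplements the argument rather than changing it.
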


\begin{proof}
By Theorem~\ref{thm:MC-Rota--Baxter}, $T + T'$ is a Rota Baxter operator if it is a Maurer Cartan element in the twisted grade Lie algebra $(\mathfrak{a}[-1], \llbracket \cdot,\cdot\rrbracket, d)$. We compute
\begin{align*}
    d(T + T') + \frac{1}{2} \llbracket T + T', T + T'\rrbracket 
    &= \underbrace{dT + \frac{1}{2} \llbracket T, T\rrbracket}_{=0} + \left( dT' + \llbracket T, T'\rrbracket \right) + \frac{1}{2} \llbracket T', T'\rrbracket \\
    &= d_T T' + \frac{1}{2} \llbracket T', T'\rrbracket.
\end{align*}It implies that $ d_T T' + \frac{1}{2} \llbracket T', T'\rrbracket=0$ and
 the proof is complete.
\end{proof} In the following, we will show that the cohomology of 
Rota--Baxter operator $T$
on Lie $H$-pseudoalgebra, that can be described in terms of Chevalley--Eilenberg cohomology of Lie  $H$-pseudoalgebras. We define a pseudomap $\rho_T : M \otimes L\to  id_{H^{\otimes 2}} \otimes_H L $  by
\begin{align*}
  \rho_T(u, x)  = \rho_T(u)(x):= (id_{H^{\otimes2}}\otimes_H T)(\rho(x)u) + {[T u * x]}_L  \quad \text{for } u \in M, \; x \in L,
\end{align*}
where the pseudobracket $[- * -]$ incorporates the $H$-action.

 \begin{proposition}
The map $\rho_T : M \otimes L \to  id_{H^{\otimes 2}} \otimes_H L $   defines a representation of the Lie $H$-pseudoalgebra $(M, {[\cdot * \cdot]}_T)$ on the $H$-module $L$.
\end{proposition}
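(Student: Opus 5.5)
The plan is to verify the defining identity of a representation directly, organizing the computation so that every term is matched by one of the structural identities already available. Here $(M,{[\cdot*\cdot]}_T)$ is the descendent Lie $H$-pseudoalgebra
$${[u*v]}_T=\rho(T(u),v)-((12)\otimes_H\mathrm{id}_M)\rho(T(v),u)+\lambda{[u*v]}_M .$$
Its Jacobi identity and skew-symmetry follow from the Maurer--Cartan equation of Theorem \ref{thm:MC-Rota--Baxter}, exactly as in Proposition \ref{inducedrb}. We must show, for all $u,v\in M$ and $x\in L$,
\begin{align*}
\rho_T({[u*v]}_T)(x)=\rho_T(u)\big(\rho_T(v)x\big)-((12)\otimes_H\mathrm{id}_L)\,\rho_T(v)\big(\rho_T(u)x\big),
\end{align*}
together with $H^{\otimes 2}$-linearity of $\rho_T$. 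Linearity is immediate: $T$ is $H$-linear, and both $\rho$ and ${[\cdot*\cdot]}_L$ are $H^{\otimes 2}$-linear. We also need the convention that $\rho(x)u$ is reindexed by the flip $(12)$ so that the $H$-factor belonging to $u$ sits first.

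To control the index bookkeeping, I would first recast the statement structurally. Form the $\lambda$-twisted semidirect product $L\ltimes_\lambda M$ on $L\oplus M$ with bracket
$${[(x,u)*(y,v)]}=\big({[x*y]}_L,\ \rho(x,v)-((12)\otimes_H\mathrm{id})\rho(y,u)+\lambda{[u*v]}_M\big).$$
Its Jacobi identity follows from the representation axiom, the Jacobi identities of $L$ and $M$, and the derivation condition \eqref{eq:derivation-condition}; equivalently, it follows from $[\mu_0-\lambda\pi_M,\mu_0-\lambda\pi_M]_{NR}=0$, established in the proof of Theorem \ref{thm:MC-Rota--Baxter}. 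The Rota--Baxter identity for $T$ says exactly that the graph $\mathrm{Gr}(T)=\{(T(u),u)\}$ is a subalgebra. Moreover, $u\mapsto(T(u),u)$ is an isomorphism $(M,{[\cdot*\cdot]}_T)\cong \mathrm{Gr}(T)$, because the $L$-component of ${[(Tu,u)*(Tv,v)]}$ equals $(\mathrm{id}\otimes_H T)$ of its $M$-component.

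Next, identify $L$ with the complement $\{(x,0)\}$, so that $L\oplus M=\mathrm{Gr}(T)\oplus L$ as $H$-modules. Define the projection $p(x,u)=x-T(u)$ onto $L$ along $\mathrm{Gr}(T)$. Then compute $p\big({[(Tu,u)*(x,0)]}\big)$. Its $L$-part is ${[Tu*x]}_L$, and its $M$-part is $-((12)\otimes_H\mathrm{id})\rho(x,u)$. After applying $p$ and reindexing, the result is exactly $\rho_T(u)x$, up to the sign convention in the definition of $\rho_T$; I will fix that sign (or the orientation of $p$) accordingly. With this identification, $\rho_T$ is the standard action of a subalgebra on the quotient $(L\oplus M)/\mathrm{Gr}(T)\cong L$, namely $u\cdot \bar w=\overline{[(Tu,u)*w]}$. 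Such an action is automatically a representation: it is well defined because $\mathrm{Gr}(T)$ is closed under the bracket, and the representation axiom is just the Jacobi identity of $L\ltimes_\lambda M$ read modulo $\mathrm{Gr}(T)$.

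The main obstacle is the pseudoalgebra bookkeeping in the last two steps. First, passing to the quotient requires that $\mathrm{Gr}(T)$ generates an $H$-submodule such that $H^{\otimes2}\otimes_H \mathrm{Gr}(T)$ sits inside $H^{\otimes 2}\otimes_H(L\oplus M)$ compatibly with $p$; this holds since $\mathrm{Gr}(T)$ is a direct summand as an $H$-module. Second, the permutations $(12)$ and $\sigma$ on $H^{\otimes 3}$ that appear in the Jacobi identity must match those in the representation axiom. Should the structural route prove awkward, the fallback is the direct expansion. There, the terms with three $T$'s cancel via the Rota--Baxter identity applied to ${[Tu*Tv]}_L$. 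The mixed terms cancel via the representation axiom of $\rho$ together with the Jacobi identity of $L$. The $\lambda$-terms cancel via the derivation condition \eqref{eq:derivation-condition}.
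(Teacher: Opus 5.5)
Your proof is correct, and it takes a genuinely different route from the paper's. The paper argues by direct expansion. It writes out $\rho_T(u)(\rho_T(v)x)-((12)\otimes_H\mathrm{id}_L)\rho_T(v)(\rho_T(u)x)$ and applies the Rota--Baxter identity to the terms where $Tu$ or $Tv$ is bracketed with a $T$-image. It then combines the Jacobi identity of $L$ with the Rota--Baxter identity for ${[Tu*Tv]}_L$ to obtain ${[T({[u*v]}_T)*x]}_L$, and regroups the remaining terms by ``module compatibility and $H$-linearity''. You instead realize $(M,{[\cdot*\cdot]}_T)$ as the graph subalgebra $\mathrm{Gr}(T)\subset L\ltimes_\lambda M$, and $\rho_T$ as its action on the quotient $(L\oplus M)/\mathrm{Gr}(T)\cong L$.

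This identification is exact, not merely up to sign, so your hedge is unnecessary. With $p(x,u)=x-T(u)$ one gets $(\mathrm{id}_{H^{\otimes 2}}\otimes_H p)\,{[(Tu,u)*(x,0)]}={[Tu*x]}_L+((12)\otimes_H T)\rho(x,u)$, which is $\rho_T(u,x)$ under your flip convention. The Jacobi identity of $L\ltimes_\lambda M$ uses the same permutation $(12)$ on $H^{\otimes 3}$ as the representation axiom. Reading Jacobi modulo $H^{\otimes 3}\otimes_H\mathrm{Gr}(T)$ therefore gives the axiom verbatim.

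Your route buys economy and transparency. The Lie structure of $(M,{[\cdot*\cdot]}_T)$ comes for free from closure of the graph, so the appeal to Proposition~\ref{inducedrb}, which concerns the non-relative case, is unnecessary. The $H^{\otimes 3}$ bookkeeping is absorbed into the general fact that a subalgebra acts on the quotient. It also becomes visible that the derivation condition \eqref{eq:derivation-condition} enters only through the Jacobi identity of $L\ltimes_\lambda M$, i.e.\ precisely for the $\lambda$-terms. The paper's computation uses that condition only implicitly, and it is loose about which of ${[\cdot*\cdot]}_L$, $\rho$, ${[\cdot*\cdot]}_M$ is meant in mixed terms. Its merit is that each cancellation is displayed term by term, which is what your fallback sketch anticipates.
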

\begin{proof}Assume that $\rho(x)u=x*u\in   {H^{\otimes 2}} \otimes_H M $. We extend the $L$-module structure to $H^{\otimes 2} \otimes_H L$ by setting
\begin{align*}
    {[a * (f \otimes g \otimes_H x)]}_L :=(id\otimes (f \otimes g)\Delta\otimes_Hid)  {[a * x]}_L
\quad \text{for all } a, x \in L,\ f, g \in H.
\end{align*}
Then
\begin{align*}
&{[\rho_T(u)* \rho_T(v)]}_L(x) 
\\&= \rho_T(u)  (\rho_T(v)x) - (((12) \otimes_H \mathrm{id}_L)(\rho_T(v)  (\rho_T(u) x)) \\
&= \rho_T(u)  \left((id_{H^{\otimes 2}}\otimes_H T)(x * v) + {[T v * x]}_L\right) \\
&\quad - ((123)\otimes_Hid))\left(\rho_T(v)  \left((12)\otimes_H T)(x * u) + {[T u * x]}_L\right)\right) \\
&= ((13)\otimes_H T)\left(((id_{H^{\otimes 2}}\otimes_H T)(x * v)) * u\right)  + {[T u * ((12)\otimes_H T)(x * v)]}_L + ((132)\otimes_H T)\left(({[T v * x]}_L) * u\right) \\
&\quad+ {[T u * {[T v * x]}_L]}_L 
- ((123)\otimes_H T)\left((id\otimes_H T)(x * u)) * v\right)  - ((132) \otimes_H \mathrm{id}_L){[T v * (id_{H^{\otimes 2}}\otimes_H T)(x * u)]}_L  \\
&\quad-(id_{H^{\otimes 3}}\otimes_H T)\left((23)\otimes_Hid){[T u * x]}_L * v\right) -((\sigma  \otimes_H \mathrm{id}_L) {[T v * {[T u * x]}_L]}_L.
\end{align*}
Next, applying the Rota--Baxter identity:
\begin{align*}
{\Big[T u * (id_{H^{\otimes 2}}\otimes_H T)(x * v)\Big]}_L &= (id_{H^{\otimes 2}}\otimes_H T)\Big({[T u * (x * v)]}_L + {[u * (id_{H^{\otimes 2}}\otimes_H T)(x * v)]}_L + \lambda {[u * (x * v)]}_L\Big), \\
{\Big[T v * (id_{H^{\otimes 2}}\otimes_H T)(x * u)\Big]}_L &= (id_{H^{\otimes 2}}\otimes_H T)\Big({[T v * (x * u)]}_L + {[v * (id_{H^{\otimes 2}}\otimes_H T)(x * u)]}_L + \lambda {[v * (x * u)]}_L\Big).
\end{align*}
Substituting these:
\begin{align*}
&= (id_{H^{\otimes 3}}\otimes_H T)\left(((id_{H^{\otimes 2}}\otimes_H T)(x * v)) * u\right)  \\
&\quad+ ((123)\otimes_H T)\left({[T u * (x * v)]}_L + {[u * (id_{H^{\otimes 2}}\otimes_H T)(x * v)]}_L + \lambda {[u * (x * v)]}_L\right) \\
&\quad + (id\otimes_H T)\left(((123)\otimes_Hid)({[T v * x]}_L) * u\right) + ((132)\otimes_Hid){[T u * {[T v * x]}_L]}_L \\
&\quad -((23)\otimes_H T)\left(((id\otimes_H T)(x * u)) * v\right) \\&\quad -(((12)\otimes_H T)\left({[T v * (x * u)]}_L + {[v * (id_{H^{\otimes 2}}\otimes_H T)(x * u)]}_L + \lambda {[v * (x * u)]}_L\right)  \\
&\quad - ((123)\otimes_H T)\left(({[T u * x]}_L) * v\right) -((132)\otimes_H \mathrm{id}_L){[T v * {[T u * x]}_L]}_L.
\end{align*}

Using the Lie $H$-pseudoalgebra Jacobi identity:
\[
{[T u * {[T v * x]}_L]}_L - ((\sigma \otimes \mathrm{id}_H) \otimes_H \mathrm{id}_L) {[T v * {[T u * x]}_L]}_L = {[{[T u * T v]}_L * x]}_L,
\]
and the Rota--Baxter identity on ${[T u * T v]}_L$:
\[
{[T u * T v]}_L = (id_{H^{\otimes 2}}\otimes_H T)\left({[T u * v]}_L+ {[u * T v]}_L+ \lambda {[u * v]}_L\right),
\]
we obtain
\[
{[T u * {[T v * x]}_L]}_L - ((\sigma \otimes \mathrm{id}_H) \otimes_H \mathrm{id}_L){[T v * {[T u * x]}_L]}_L = {[(id_{H^{\otimes 2}}\otimes_H T)\left({[T u * v]}_L+ {[u * T v]}_L+ \lambda {[u * v]}_L\right) * x]}_L.
\]

Now reorganize the $T$-terms:
\begin{align*}
&(id\otimes_H T)\Big((id\otimes_H T)(x * v)) * u + ((123)\otimes_Hid)({[T u * (x * v)]}_L+ {[u * (id\otimes_H T)(x * v)]}_L)\\&+((12)\otimes_Hid) ({[T v * x]}_L) * u\Big) \\
- &(id\otimes_H T)\Big((((23)\otimes_H T)(x * u)) * v + ((12)\otimes_Hid){[T v * (x * u)]}_L+ {[v * (id_{H^{\otimes 2}}\otimes_H T)(x * u)]}_L\\&+((123)\otimes_Hid) ({[T u * x]}_L) * v\Big) \\
+ &\lambda (((123)\otimes_H T)\Big({[u * (x * v)]}_L- (((12) \otimes_H \mathrm{id}_L){[v * (x * u)]}_L\Big) \\&+((132)\otimes_Hid) {[(id\otimes_H T)\Big({[T u * v]}_L+ {[u * T v]}_L+ \lambda {[u * v]}_L\Big) * x]}_L.
\end{align*}

Using module compatibility and $H$-linearity, 
we get
\begin{align*}
&= (id\otimes_H T)\Big(x * ({[T u * v]}_L  + {[u * T v]}_L + \lambda {[u * v]}_L)\Big) + ((132)\otimes id){\Big[(id\otimes_H T)\Big({[T u * v]}_L + {[u * T v]}_L + \lambda {[u * v]}_L\Big) * x\Big]}_L \\
&=((132)\otimes_Hid)( \rho_T({[T u * v]}_L + {[u * T v]}_L + \lambda {[u * v]}_L)(x) )\\
&=((132)\otimes_Hid) \rho_T({[u * v]}_T)(x).
\end{align*}This completes the proof.
\end{proof}
Let $T : M \to L$ be a $\lambda$-weighted relative Rota--Baxter operator between Lie $H$-pseudoalgebras. Then one may consider the Chevalley--Eilenberg cohomology of the Lie $H$-pseudoalgebra $(M, {[\cdot * \cdot]}_T)$ with coefficients in the representation $(L, \rho_T)$. The $n$-th cochain group is 
\begin{align}
    C^n_{\mathrm{CE}}(M, L) = \mathrm{Hom}_{H^{\otimes n}}(M^{\boxtimes n}, H^{\otimes n} \otimes_H L), \quad \text{for } n \geq 0,
\end{align}
and the coboundary map $\delta_{\mathrm{CE}} : C^n_{\mathrm{CE}}(M, L) \to C^{n+1}_{\mathrm{CE}}(M, L)$ is given by the standard Chevalley--Eilenberg differential adapted to pseudomaps: 
\begin{align*}
(\delta_{\mathrm{CE}} f)(u_1, \dots, u_{n+1}) 
&= \sum_{i=1}^{n+1} (-1)^{i+1} ((\sigma_{1,i}\otimes_Hid)\rho_T(u_i) f(u_1, \dots, \widehat{u_i}, \dots, u_{n+1}) \\
&\quad + \sum_{1 \leq i < j \leq n+1} (-1)^{i+j}(\sigma_{i,j}\otimes_Hid) f\left({[u_i * u_j]}_T, u_1, \dots, \widehat{u_i}, \dots, \widehat{u_j}, \dots, u_{n+1}\right),
\end{align*} where $\sigma_{1,i}(h_1\otimes\cdots\otimes  h_n)=h_i\otimes h_1\otimes\cdots\otimes\hat{h_i}\otimes\cdots\otimes h_n$, and $\sigma_{i,j}(h_1\otimes\cdots\otimes h_n)=h_i\otimes h_j\otimes h_1\otimes\cdots\otimes\hat {h_i}\otimes\cdots\otimes \hat{h_j}\otimes\cdots\otimes h_n$ and the pseudobracket ${[\cdot * \cdot]}_T$ on $M$ is defined by  $ {[u * v]}_T = [T u * v] + [u * T v] + \lambda ([u * v]).$ The corresponding cohomology groups are denoted by $H^*_{\mathrm{CE}}(M, L)$. 

Moreover, one can verify that the twisted differential $d_T$ coincides with the Chevalley--Eilenberg differential $\delta_{\mathrm{CE}}$. Hence the cohomology $H^\bullet_T(M, L)$ is isomorphic to the Chevalley--Eilenberg cohomology $H^\bullet_{\mathrm{CE}}(M, L)$. 

\section{Non-Abelian Extensions and Cohomology}\label{sec:extensions}
\label{sec:non_abelian_extensions}
In this section, we develop the theory of non-abelian extensions for Rota--Baxter Lie $H$-pseudoalgebras. After defining non-abelian extensions, we introduce the associated $2$-cocycle data. We show that, there is an equivalence relation between non-abelian extensions $\mathrm{Ext}_{\text{nab}}(L_T , M_S )$ and cohomology set of non-abelian $2$-cocycle $H^2_{\text{nab}}(L_T, M_S)$. We begin by recalling the notion of a non-abelian extension.
\begin{definition}
\label{def:non_ab_ext}
Let $(L_T , {[ \cdot * \cdot ]}_L)$ and $(M_S, [ \cdot * \cdot ]_M)$ be two Rota--Baxter Lie $H$-pseudoalgebras. A  \emph{non-abelian extension} of $L_T$ by $M_S$ is a Rota--Baxter Lie $H$-pseudoalgebra $(R_U, [ \cdot * \cdot ]_R)$ equipped with a short exact sequence in the category of Rota--Baxter Lie $H$-pseudoalgebras, i.e., 
\begin{align}\label{nonab}0 \longrightarrow M_S  \xrightarrow{i} R_U \xrightarrow{p} L_T   \longrightarrow 0.
\end{align}
We often denote such an extension simply by $R_U$. The maps $i$ and $p$ are inclusion and projection maps respectively.
\end{definition}

\begin{definition} 
\label{def:equiv_ext}
Two non-abelian extensions $R_U$ and $R'_{U'}$ of $L_T  $ by $M_S $ are said to be \emph{equivalent} if there exists an isomorphism $\Theta: R_U \to R'_{U'}$ of Rota--Baxter Lie $H$-pseudoalgebras such that makes the following diagram commutes:
\[
\begin{tikzcd}
    0 \arrow[r] & M_S  \arrow[r, "i"] \arrow[d, equal] & R_U \arrow[r, "p"] \arrow[d, "\Theta"] & L_T   \arrow[r] \arrow[d, equal] & 0 \\
    0 \arrow[r] & M_S  \arrow[r, "i'"] & R'_{U'} \arrow[r, "p'"] & L_T   \arrow[r] & 0
\end{tikzcd}
\]
We denote by $\mathrm{Ext}_{\text{nab}}(L_T , M_S )$ the set of all equivalence classes of non-abelian extensions of $L_T  $ by $M_S $.
\end{definition}
To classify these extensions, we fix a section $s: L \to R$, which is an $H$-linear section map satisfying $p \circ s = \mathrm{id}_L$. The choice of section allows us to decompose any element of $R$ as $u + s(x)$ for $u \in M$ and $x \in L$. This decomposition leads to the following key definition. 
\begin{definition}
\label{def:cocycle_data}
Given a non-abelian extension $R_U$ and a section $s$, we define a triple $(\chi, \psi,\Phi)$ of three fundamental maps. In which $\chi: L \otimes L \to H^{\otimes 2} \otimes_H M$ is  a skew-symmetric $H^{\otimes 2}$-linear map, $\psi: L \otimes M \to H^{\otimes 2} \otimes_H M$ is an $H^{\otimes 2}$-linear map defining a representation of $L$ on $M$,  and  $\Phi: L \to M$ is an $H$-linear map satisfying
\begin{equation}
\label{eq:cocycle_def}
\begin{cases}
    \chi(x, y) &:= {[s(x) * s(y)]}_R - (\mathrm{id}_{H^{\otimes 2}} \otimes_H s){[x * y]}_L, \\
    \psi_x(u) &:= {{[s(x) * u]}}_R, \\
    \Phi(x) &:= U(s(x)) -  s  (T(x)),
\end{cases}
\end{equation}for all $x, y \in L$ and $u \in M$. 
The map $\chi$ measures the failure of $s$ to be a Lie pesudoalgebra homomorphism, $\psi$ describes the action of $L$ on $M$ via the adjoint action in $R$, and $\Phi$ measures the failure of $s$ to preserve the Rota--Baxter operator.
\end{definition}
Viewing the exact sequence \eqref{nonab} as a non-abelian extension of the underlying Lie 
$H$-pseudoalgebra $L$ by $M$ (by forgetting the Rota--Baxter operators), it follows from \cite{Das-Nonabelian-cohomology} that the associated non-abelian $2$-cocycle of $L$ with values in $M$ satisfy following equations: 
\begin{align}\label{2cocycle1}\psi_x(\psi_y(u)) - ((12) \otimes_H \mathrm{id}_M)\psi_y(\psi_x(u)) = \psi_{{[x*y]}_L}(u) + {[\chi(x,y)*u]}_M\\
\label{2cocycle2} \psi_x(\chi(y,z)) -((12)\otimes_Hid) \psi_y(\chi(x,z))+ ((123)\otimes_Hid)\psi_z(\chi(x,y)) \nonumber\\
\qquad\qquad = \chi(x,{[y*z]}_L)-((12)\otimes_Hid)\chi(y,{[x*z]}_L)+((123)\otimes_Hid)\chi(z,{[x*y]}_L)  
\end{align}for all $x,y,z \in L$ and $ u \in M$. In terms of $L_T$ and $M_S$, the above expression can be expressed in the following form.
\begin{lemma} The maps $\chi$, $\psi$, and $\Phi$ defined above satisfy the following compatible conditions: for all $x,y \in L$ and $u \in M$,
\begin{align}\label{E1}
&\psi_{T(x)}(S(u)) = (\mathrm{id}_{H^{\otimes 2}} \otimes_H S)\Big( \psi_{T(x)}(u) + \psi_{x}(S(u)) + \lambda \psi_{x}(u) \Big) \\
&\qquad\qquad\qquad + (\mathrm{id}_{H^{\otimes 2}} \otimes_H S)\big({[\Phi(x) * u]}_M\big) - {[\Phi(x) * S(u)]}_M, \nonumber
\end{align}

\begin{align}\label{E2}
&\chi(T(x), T(y)) - (\mathrm{id}_{H^{\otimes 2}} \otimes_H S)\Big(\chi(T(x), y) + \chi(x, T(y)) + \lambda \chi(x, y)\Big) \\
&\quad - (\mathrm{id}_{H^{\otimes 2}} \otimes_H \Phi)\Big({[T(x) * y]}_L + {[x * T(y)]}_L + \lambda {[x * y]}_L\Big) \nonumber \\
&\quad + \psi_{T(x)}(\Phi(y)) - ((12)\otimes_H \mathrm{id}_M)\,\psi_{T(y)}(\Phi(x)) \nonumber \\
&\quad + (\mathrm{id}_{H^{\otimes 2}} \otimes_H S)\Big( ((12) \otimes_H \mathrm{id}_M)\,\psi_y(\Phi(x)) - \psi_x(\Phi(y)) + (\mathrm{id}_{H^{\otimes 2}} \otimes_H \Phi)({[x * y]}_L) \Big) \nonumber \\
&\quad + {[\Phi(x) * \Phi(y)]}_M = 0. \nonumber
\end{align}
\end{lemma}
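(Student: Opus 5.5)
The plan is to expand the Rota--Baxter identity for $U$ on suitable pairs of elements of $R$, using the section $s$ to rewrite everything in terms of $\chi,\psi,\Phi$. We use three facts throughout. First, $U$ restricts to $S$ on $M$, since $i$ is a morphism of Rota--Baxter Lie $H$-pseudoalgebras. Second, $U(s(x)) = s(T(x)) + \Phi(x)$, by definition of $\Phi$. Third, $U(R)$ relates to $T$ via $p\circ U = T\circ p$. Brackets in $R$ then decompose as follows: ${[s(x)*s(y)]}_R = (\mathrm{id}\otimes_H s){[x*y]}_L + \chi(x,y)$, ${[s(x)*u]}_R=\psi_x(u)$, and ${[u*v]}_R={[u*v]}_M$. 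Every bracket involving $U(s(x))$ can therefore be split into a term involving $s(T(x))$ and a term involving $\Phi(x)\in M$.

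For \eqref{E1}, apply the Rota--Baxter identity for $U$ of weight $\lambda$ to the pair $(s(x),u)$ with $u\in M$:
\[
{[U(s(x))*U(u)]}_R=(\mathrm{id}_{H^{\otimes 2}}\otimes_H U)\big({[U(s(x))*u]}_R+{[s(x)*S(u)]}_R+\lambda{[s(x)*u]}_R\big).
\]
The left side becomes $\psi_{T(x)}(S(u))+{[\Phi(x)*S(u)]}_M$. On the right, all inner brackets lie in $H^{\otimes 2}\otimes_H M$, because $M$ is an ideal, so $U$ acts there as $S$. The right side is thus $(\mathrm{id}\otimes_H S)\big(\psi_{T(x)}(u)+{[\Phi(x)*u]}_M+\psi_x(S(u))+\lambda\psi_x(u)\big)$. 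Rearranging gives exactly \eqref{E1}.

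For \eqref{E2}, apply the identity to $(s(x),s(y))$. On the left, write $U(s(x))=s(T(x))+\Phi(x)$ and expand bilinearly. This produces $(\mathrm{id}\otimes_H s){[T(x)*T(y)]}_L+\chi(T(x),T(y))$, the two cross terms $\psi_{T(x)}(\Phi(y))$ and $-((12)\otimes_H\mathrm{id})\psi_{T(y)}(\Phi(x))$ (by skew-symmetry), and the term ${[\Phi(x)*\Phi(y)]}_M$. On the right, the inner sum is
\[
(\mathrm{id}\otimes_H s)\big({[T(x)*y]}_L+{[x*T(y)]}_L+\lambda{[x*y]}_L\big)+\chi(T(x),y)+\chi(x,T(y))+\lambda\chi(x,y)+\psi_x(\Phi(y))-((12)\otimes_H\mathrm{id})\psi_y(\Phi(x)).
\]
Apply $\mathrm{id}\otimes_H U$ to this sum. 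On the $s(\cdot)$ part, $U s = sT+\Phi$; combined with the Rota--Baxter identity for $T$, the $sT$-part reproduces $(\mathrm{id}\otimes_H s){[T(x)*T(y)]}_L$ and cancels the left side's $L$-component. The $\Phi$-part gives $(\mathrm{id}\otimes_H\Phi)({[T(x)*y]}_L+{[x*T(y)]}_L+\lambda{[x*y]}_L)$. On the $M$ part, $U$ acts as $S$. Collecting all terms in $M$ yields \eqref{E2}.

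The main obstacle is bookkeeping rather than anything conceptual. One must track the $H^{\otimes 2}$-permutations, such as the $(12)$ arising from skew-symmetry ${[\Phi(x)*s(T(y))]}_R=-((12)\otimes_H\mathrm{id})\psi_{T(y)}(\Phi(x))$. One must also justify commuting $\mathrm{id}\otimes_H U$ with the $H$-linear decompositions. The term $(\mathrm{id}\otimes_H\Phi)({[x*y]}_L)$ inside the $S$-bracket of \eqref{E2} needs particular care. I would check it against the exact grouping: it should arise from the $\lambda$-weight contributions and the decomposition $U\circ s$, and its coefficient has to be reconciled with the other $\Phi$-term. The sign conventions must then be aligned with those of \eqref{2cocycle1} and \eqref{2cocycle2}.
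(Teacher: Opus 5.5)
\textbf{Gap in the final step for \eqref{E2}: the printed identity has an extra term, and the statement is false as stated.}

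Your route is the paper's route. The paper runs the same computation backwards: it expands the left side of \eqref{E2} through the definitions of $\chi,\psi,\Phi$ and cancels using the Rota--Baxter identities for $T$ and $U$. Your derivation of \eqref{E1} is complete and correct.

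The problem is your last sentence for \eqref{E2}. Carry out your own computation exactly as you describe it. The $s$-parts cancel via the Rota--Baxter identity for $T$. What remains is \eqref{E2} \emph{with the summand} $(\mathrm{id}_{H^{\otimes 2}}\otimes_H S)\big((\mathrm{id}_{H^{\otimes 2}}\otimes_H \Phi)({[x*y]}_L)\big)$ \emph{deleted}. It is not \eqref{E2} as printed.

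Your guess that this summand arises from the $\lambda$-contributions or from $U\circ s$ does not work. Applying $\mathrm{id}_{H^{\otimes 2}}\otimes_H U$ to $\lambda{[s(x)*s(y)]}_R$ gives three terms:
\begin{itemize}
\item $\lambda(\mathrm{id}_{H^{\otimes 2}}\otimes_H sT)({[x*y]}_L)$, absorbed by the Rota--Baxter identity for $T$;
\item $\lambda(\mathrm{id}_{H^{\otimes 2}}\otimes_H \Phi)({[x*y]}_L)$, already inside the $\Phi$-group of \eqref{E2};
\item $\lambda(\mathrm{id}_{H^{\otimes 2}}\otimes_H S)\chi(x,y)$, already inside the $\chi$-group of \eqref{E2}.
\end{itemize}
Nothing else in the expansion produces an $S\circ\Phi$ term with coefficient $1$. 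So the claim that collecting terms yields \eqref{E2} is unjustified, and in fact it is false.

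\textbf{Counterexample.} No argument can close this gap, because \eqref{E2} as printed does not hold. Take the following data:
\begin{itemize}
\item $H=\mathbb{k}$ and $\lambda=0$;
\item $L=\mathrm{span}(e_1,e_2)$ with $[e_1,e_2]=e_2$, and $T=0$;
\item $M=\mathbb{k}$ abelian, with $S=\mathrm{id}$;
\item $R=L\oplus M$ as a direct sum of Lie algebras, with $U(x,u)=(0,u+\Phi(x))$, where $\Phi$ is a linear functional with $\Phi(e_2)=1$.
\end{itemize}
The image of $U$ is central, so $U$ is a Rota--Baxter operator of weight $0$. Moreover $U|_M=S$ and $p\circ U=T\circ p$, so this is a non-abelian extension. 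With $s(x)=(x,0)$ one gets $\chi=0$ and $\psi=0$, and the left side of \eqref{E2} reduces to $\Phi([x,y])$. This equals $1$ at $(x,y)=(e_1,e_2)$.

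\textbf{The paper's proof has the same hole.} In its long expansion, everything cancels by the two Rota--Baxter identities except
\[
(\mathrm{id}_{H^{\otimes 2}}\otimes_H S\circ U\circ s)({[x*y]}_L)-(\mathrm{id}_{H^{\otimes 2}}\otimes_H S\circ s\circ T)({[x*y]}_L)=(\mathrm{id}_{H^{\otimes 2}}\otimes_H S\circ\Phi)({[x*y]}_L).
\]
The proof then asserts without justification that this vanishes.

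What your computation actually establishes is the corrected identity, namely \eqref{E2} without that term. You should state and prove that version explicitly, rather than asserting a match with the printed formula.
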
 
\begin{proof}Consider that
\begin{align*}
 &\psi_{T(x)} S(u) - (\mathrm{id}_{H^{\otimes 2}} \otimes_H S)\Big( \psi_{T(x)} u + \psi_{x} S(u) + \lambda \psi_{x} u\Big) - (\mathrm{id}_{H^{\otimes 2}} \otimes_H S)({[\Phi(x) * u]}_M) + {[\Phi(x) * S(u)]}_M \\
&={[s(T(x)) * S(u)]}_R - (\mathrm{id}_{H^{\otimes 2}} \otimes_H S)\Big({[s(T(x)) * u]}_R + {[s(x) * S(u)]}_R + \lambda {[s(x) * u]}_R\Big) \\
&\quad - (\mathrm{id}_{H^{\otimes 2}} \otimes_H S)({[\Phi(x) * u]}_M) + {[\Phi(x) * S(u)]}_M \\
&= {[s(T(x)) * S(u)]}_R - (\mathrm{id}_{H^{\otimes 2}} \otimes_H S)\Big({[s(T(x)) * u]}_R + {[s(x) * S(u)]}_R + \lambda {[s(x) * u]}_R\Big) \\
&\quad - (\mathrm{id}_{H^{\otimes 2}} \otimes_H S)({[U(s(x)) * u]}_M - {[s(T(x)) * u]}_M) + {[U(s(x)) * S(u)]}_M - {[s(T(x)) * S(u)]}_M \\
&= - (\mathrm{id}_{H^{\otimes 2}} \otimes_H S)\Big({[U(s(x)) * u]}_M + {[s(x) * S(u)]}_M + \lambda {[s(x) * u]}_M\Big) + {[U(s(x)) * S(u)]}_M \\
&= - (\mathrm{id}_{H^{\otimes 2}} \otimes_H S)\Big({[U(s(x)) * u]}_M + {[s(x) * S(u)]}_M + \lambda {[s(x) * u]}_M\Big) \\
&\quad + (\mathrm{id}_{H^{\otimes 2}} \otimes_H S)\Big({[U(s(x)) * u]}_M + {[s(x) * S(u)]}_M + \lambda {[s(x) * u]}_M\Big) \quad (\text{as } S = {U|}_{M})\\
&= 0.
\end{align*}
This proves the identity in Eq. \eqref{E1}.
To prove the identity in Eq.~\eqref{E2}, we observe that
\begin{align*}
&\chi(T(x), T(y)) - (\mathrm{id}_{H^{\otimes 2}} \otimes_H S)\Big(\chi(T(x), y) + \chi(x, T(y)) + \lambda \chi(x, y)\Big) \\
&\quad - (\mathrm{id}_{H^{\otimes 2}} \otimes_H \Phi)\Big({[T(x) * y]}_L + {[x * T(y)]}_L + \lambda {[x * y]}_L\Big) \\
&\quad + \psi_{T(x)}(\Phi(y)) - ((12) \otimes_H \mathrm{id}_M)\,\psi_{T(y)}(\Phi(x)) \\
&\quad + (\mathrm{id}_{H^{\otimes 2}} \otimes_H S)\Big( ((12) \otimes_H \mathrm{id}_M)\,\psi_y(\Phi(x)) - \psi_x(\Phi(y)) + (\mathrm{id}_{H^{\otimes 2}} \otimes_H \Phi)({[x * y]}_L) \Big) \\
&\quad + {[\Phi(x) * \Phi(y)]}_M \\
&= {[s(T(x)) * s(T(y))]}_R - (\mathrm{id}_{H^{\otimes 2}} \otimes_H s)({[T(x) * T(y)]}_L) \\
&\quad - (\mathrm{id}_{H^{\otimes 2}} \otimes_H S)\Big( {[s(T(x)) * s(y)]}_R - (\mathrm{id}_{H^{\otimes 2}} \otimes_H s)({[T(x) * y]}_L) \\
&\qquad\qquad\qquad\quad +{ [s(x) * s(T(y))]}_R - (\mathrm{id}_{H^{\otimes 2}} \otimes_H s)({[x * T(y)]}_L) \\
&\qquad\qquad\qquad\quad + \lambda {[s(x) * s(y)]}_R - \lambda (\mathrm{id}_{H^{\otimes 2}} \otimes_H s)({[x * y]}_L) \Big) \\
&\quad - (\mathrm{id}_{H^{\otimes 2}} \otimes_H U \circ s)\Big({[T(x) * y]}_L + {[x * T(y)]}_L + \lambda {[x * y]}_L\Big) \\
&\quad + (\mathrm{id}_{H^{\otimes 2}} \otimes_H s \circ T)\Big({[T(x) * y]}_L + {[x * T(y)]}_L + \lambda {[x * y]}_L\Big) \\
&\quad + {[s(T(x)) * U(s(y))]}_R - {[s(T(x)) * s(T(y))]}_R \\
&\quad - ((12)\otimes_H \mathrm{id}_M){[s(T(y)) * U(s(x))]}_R + ((12)\otimes_H \mathrm{id}_M){[s(T(y)) * s(T(x))]}_R \\
&\quad + (\mathrm{id}_{H^{\otimes 2}} \otimes_H S)((12) \otimes_H \mathrm{id}_M){[s(y) * U(s(x))]}_R \\
&\quad - (\mathrm{id}_{H^{\otimes 2}} \otimes_H S)((12)\otimes_H \mathrm{id}_M){[s(y) * s(T(x))]}_R \\
&\quad - (\mathrm{id}_{H^{\otimes 2}} \otimes_H S){[s(x) * U(s(y))]}_R + (\mathrm{id}_{H^{\otimes 2}} \otimes_H S){[s(x) * s(T(y))]}_R \\
&\quad + (\mathrm{id}_{H^{\otimes 2}} \otimes_H S \circ U \circ s)({[x * y]}_L) - (\mathrm{id}_{H^{\otimes 2}} \otimes_H S \circ s \circ T)({[x * y]}_L) \\
&\quad + {[U(s(x)) * U(s(y))]}_R - {[U(s(x)) * s(T(y))]}_R \\
&\quad - {[s(T(x)) * U(s(y))]}_R + {[s(T(x)) * s(T(y))]}_R.
\end{align*}
The above expression vanishes because both $T$ and $U$ satisfy the Rota--Baxter identity on the Lie $H$-pseudoalgebras $(L, {[\cdot * \cdot]}_L)$ and $(R, {[\cdot * \cdot]}_R)$, respectively. In our computation, we use that $M$ is an ideal in $R$, so ${[r * u]}_R \in H^{\otimes 2} \otimes_H M$ for all $r \in R$, $u \in M$, and that $S = U|_M$. This completes the proof.
\end{proof}
\begin{definition}
\label{def:na_cocycle}
A \emph{non-abelian $2$-cocycle} of $ L_T$ with values in $M_S$ is a triple $(\chi, \psi, \Phi)$ consisting of maps as defined above, satisfying the conditions \eqref{2cocycle1},\eqref{2cocycle2},\eqref{E1}, and \eqref{E2}.
\end{definition} 

For a split sequence in the category of $H$-modules, the section is not unique in general. However, a non-abelian $2$-cocycle depends on the section. For
two different sections, we have the following results.\\
Let $s': L \to R$ be any other section of the exact sequence in Eq. \eqref{nonab}. Define the map $\tau :L \to M$ by
\begin{align*}
 \tau(x) := s(x) - s'(x) \quad \text{for all } x \in L.
\end{align*}
Let $\chi'_\l, \rho', \Phi'$ be the maps deformed by the section $s'$, given by Eqs. \eqref{eq:cocycle_def}. For all $x,y \in L $ and $u \in M$, we have:\begin{align}\label{equivalent1}
\psi _x(u) - \psi' _x(u) &={ [s(x) *u]}_R - {[s'(x) *u]}_R = {[\tau(x) *u]}_M,
\end{align}
\begin{align}\label{equivalent2}
&\chi(x, y) - \chi'(x, y) \\&= {[s(x) * s(y)]}_R - (\mathrm{id}_{H^{\otimes 2}} \otimes_H s)({[x * y]}_L) - {[s'(x) * s'(y)]}_R + (\mathrm{id}_{H^{\otimes 2}} \otimes_H s')({[x * y]}_L) \nonumber \\
&= {[(s' + \tau)(x) * (s' + \tau)(y)]}_R - (\mathrm{id}_{H^{\otimes 2}} \otimes_H (s' + \tau))({[x * y]}_L)  - {[s'(x) * s'(y)]}_R + (\mathrm{id}_{H^{\otimes 2}} \otimes_H s')({[x * y]}_L) \nonumber \\
&= {[s'(x) * s'(y)]}_R + {[s'(x) * \tau(y)]}_R + {[\tau(x) * s'(y)]}_R + {[\tau(x) * \tau(y)]}_R  - (\mathrm{id}_{H^{\otimes 2}} \otimes_H s')({[x * y]}_L)\nonumber \\
&\quad - (\mathrm{id}_{H^{\otimes 2}} \otimes_H \tau)({[x * y]}_L)  - {[s'(x) * s'(y)]}_R + (\mathrm{id}_{H^{\otimes 2}} \otimes_H s')({[x * y]}_L) \nonumber \\
&= {[\tau(x) * \tau(y)]}_M - (\mathrm{id}_{H^{\otimes 2}} \otimes_H \tau)({[x * y]}_L) + {[s'(x) * \tau(y)]}_R + {[\tau(x) * s'(y)]}_R \nonumber \\
&= {[\tau(x) * \tau(y)]}_M - (\mathrm{id}_{H^{\otimes 2}} \otimes_H \tau)({[x * y]}_L) + \psi'_x(\tau(y)) - ((12)\otimes_H \mathrm{id}_M)\psi'_y(\tau(x)),\nonumber
\end{align}
and
\begin{align}\label{equivalent3} 
\Phi(x) - \Phi'(x) &= (U \circ s -  s \circ T)(x) - (U \circ s' -  s'  \circ T)(x)  \\
&= U(s - s')(x) -  (s - s') (T(x)) \nonumber \\
&= S(\tau(x)) - \tau(T(x)). \nonumber
\end{align} 
The above discussion leads us to the following definition.
\begin{definition}
    Two non-abelian $2$-cocycles $(\chi, \psi, \Phi)$ and $(\chi', \psi', \Phi')$ are said to be \emph{equivalent} if there exists an $H$-linear map $\tau:L \to M$ such that for all $x, y \in L$ and $u \in M$, we have
\begin{align}\psi_x(u) - \psi'_x(u) &= {[\tau(x)*u]}_M, \label{eq:cocycle_equiv_1} \\\chi(x, y) - \chi'(x, y) &= \psi'_x(\tau(y)) - ((12) \otimes_H \mathrm{id}_M)\psi'_y(\tau(x)) - (\mathrm{id}_{H^{\otimes 2}} \otimes_H \tau){[x*y]}_L + {[\tau(x)*\tau(y)]}_M, \label{eq:cocycle_equiv_2} \\\Phi(x) - \Phi'(x) &= S(\tau(x)) -  \tau (T(x)). \label{eq:cocycle_equiv_3} \end{align}
\end{definition}
We denote by $H^2_{\text{nab}}( L_T, M_S)$ the set of all equivalence classes of non-abelian 2-cocycles of $ L_T$ with values in $M_S$. We call it the \emph{non-abelian cohomology group}.
We now provide the main theorem of this section, which relates the set of all equivalence classes of non-abelian cohomology groups $H^2_{\text{nab}}( L_T, M_S)$ with the set of all equivalence classes of non-abelian extensions $\mathrm{Ext}_{\text{nab}}(L_T, M_S)$.   
\begin{theorem} 
\label{thm:classification}
There is a bijection between the set of equivalence classes of non-abelian extensions $\mathrm{Ext}_{\text{nab}}(L_T, M_S)$ and the non-abelian cohomology set $H^2_{\text{nab}}(L_T, M_S)$ in the category of Lie $H$-pseudoalgebras.
\end{theorem}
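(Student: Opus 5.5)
We construct maps in both directions, $\Upsilon:\mathrm{Ext}_{\text{nab}}(L_T,M_S)\to H^2_{\text{nab}}(L_T,M_S)$ and $\Omega:H^2_{\text{nab}}(L_T,M_S)\to \mathrm{Ext}_{\text{nab}}(L_T,M_S)$, show that both are well defined, and check that they are mutually inverse.

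\emph{The map $\Upsilon$.} Given an extension $R_U$, choose an $H$-linear section $s$ and take the triple $(\chi,\psi,\Phi)$ of Definition \ref{def:cocycle_data}. By the cited result of \cite{Das-Nonabelian-cohomology}, this triple satisfies \eqref{2cocycle1} and \eqref{2cocycle2}, and by the preceding Lemma it satisfies \eqref{E1} and \eqref{E2}. So it is a non-abelian $2$-cocycle.

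Independence of the section follows from the computations \eqref{equivalent1}--\eqref{equivalent3} with $\tau=s-s'$: the cocycles coming from two sections are equivalent. Independence of the representative extension is similar. If $\Theta:R_U\to R'_{U'}$ is an equivalence and $s$ is a section of $R$, then $s'=\Theta\circ s$ is a section of $R'$. Since $\Theta$ is the identity on $M$, preserves brackets and intertwines $U$ with $U'$, the cocycle of $(R',s')$ coincides with that of $(R,s)$. Hence $\Upsilon$ is well defined.

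\emph{The map $\Omega$.} Given a $2$-cocycle $(\chi,\psi,\Phi)$, define on the $H$-module $L\oplus M$ the pseudobracket
\begin{align*}
[(x,u)*(y,v)]_\chi=\Big([x*y]_L,\ \psi_x(v)-((12)\otimes_H\mathrm{id}_M)\psi_y(u)+\chi(x,y)+[u*v]_M\Big),
\end{align*}
and the operator $U_\Phi(x,u)=(T(x),\,S(u)+\Phi(x))$. Skew-symmetry follows from that of $\chi$, $[\cdot*\cdot]_L$ and $[\cdot*\cdot]_M$. The Jacobi identity should be the standard non-abelian computation: it reduces to \eqref{2cocycle1}, \eqref{2cocycle2}, the Jacobi identities of $L$ and $M$, and the compatibility of $\psi$ with $[\cdot*\cdot]_M$ that is implicit in the cocycle data. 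We take this from \cite{Das-Nonabelian-cohomology}, after checking that the permutations $(12)$, $(123)$ line up with those in \eqref{2cocycle2}.

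The Rota--Baxter identity for $U_\Phi$ is checked by expanding
\begin{align*}
[U_\Phi(x,u)*U_\Phi(y,v)]_\chi-(\mathrm{id}\otimes_H U_\Phi)\big([U_\Phi(x,u)*(y,v)]_\chi+[(x,u)*U_\Phi(y,v)]_\chi+\lambda[(x,u)*(y,v)]_\chi\big).
\end{align*}
The $L$-component vanishes by the Rota--Baxter identity for $T$. By bilinearity, the $M$-component splits into three pieces:
\begin{itemize}
\item the pure $(u,v)$ part, which vanishes because $S$ is a Rota--Baxter operator on $M$;
\item the mixed $(x,v)$ part and its skew partner $(y,u)$, which vanish by \eqref{E1};
\item the pure $(x,y)$ part, which vanishes by \eqref{E2}.
\end{itemize}
Then $M\to L\oplus M\to L$, with the obvious inclusion and projection, is a non-abelian extension. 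Its section $x\mapsto(x,0)$ returns exactly $(\chi,\psi,\Phi)$, which shows $\Upsilon\circ\Omega=\mathrm{id}$ at the level of cocycles.

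\emph{Well-definedness of $\Omega$ and $\Omega\circ\Upsilon=\mathrm{id}$.} If $(\chi,\psi,\Phi)$ and $(\chi',\psi',\Phi')$ are equivalent via $\tau$, define $\Theta(x,u)=(x,u+\tau(x))$ from the extension built on $(\chi,\psi,\Phi)$ to the one built on $(\chi',\psi',\Phi')$. We check:
\begin{itemize}
\item $\Theta$ preserves brackets, which follows from \eqref{eq:cocycle_equiv_1} and \eqref{eq:cocycle_equiv_2};
\item $\Theta\circ U_\Phi=U_{\Phi'}\circ\Theta$, which follows from \eqref{eq:cocycle_equiv_3}.
\end{itemize}
The sign convention for $\tau$ must be matched against the definition of equivalence.

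Finally, for any extension $R_U$ with section $s$, the map $(x,u)\mapsto s(x)+u$ is an $H$-module isomorphism from $L\oplus M$ onto $R$. It carries $[\cdot*\cdot]_\chi$ to $[\cdot*\cdot]_R$ and $U_\Phi$ to $U$, since $U(s(x)+u)=s(T(x))+\Phi(x)+S(u)$. It commutes with the inclusions and projections, so it is an equivalence of extensions, giving $\Omega\circ\Upsilon=\mathrm{id}$.

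\emph{Main obstacle.} The hardest step is verifying the Rota--Baxter identity for $U_\Phi$ in the $(x,y)$-component and matching it term by term with \eqref{E2}. This requires keeping track of the $H^{\otimes2}$-permutations, such as $(12)\otimes_H\mathrm{id}_M$ acting on $\psi_y(\Phi(x))$, and of where $\mathrm{id}\otimes_H S$ versus $\mathrm{id}\otimes_H\Phi$ are applied. I would first test the computation with $\chi=0$ and $\Phi=0$ to fix the signs, then add the $\Phi$-terms.
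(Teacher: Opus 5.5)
Your route is the same as the paper's. You build $\Upsilon$ through a section, build the inverse through the twisted bracket on $L\oplus M$ with $O(x,u)=(T(x),S(u)+\Phi(x))$, and use $\Theta(x,u)=(x,u+\tau(x))$ for well-definedness. You are more explicit than the paper about $\Omega\circ\Upsilon=\mathrm{id}$, via $(x,u)\mapsto s(x)+u$.

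Your caveat about the Jacobi identity of $J$ is justified. It needs $\psi_x([u*v]_M)=[\psi_x(u)*v]_M+((12)\otimes_H\mathrm{id}_M)[u*\psi_x(v)]_M$, which is not among \eqref{2cocycle1}--\eqref{E2} and has to be added to the definition of a $2$-cocycle. It holds automatically for cocycles coming from extensions.

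There is, however, a step that fails as written: the pure $(x,y)$ part does \emph{not} vanish by \eqref{E2}. Set $u=v=0$ and take the $M$-component of $[O(x,0)*O(y,0)]_J-(\mathrm{id}_{H^{\otimes 2}}\otimes_H O)\big([O(x,0)*(y,0)]_J+[(x,0)*O(y,0)]_J+\lambda[(x,0)*(y,0)]_J\big)$. It equals
\begin{align*}
&\chi(T(x),T(y))-(\mathrm{id}_{H^{\otimes 2}}\otimes_H S)\big(\chi(T(x),y)+\chi(x,T(y))+\lambda\chi(x,y)\big)-(\mathrm{id}_{H^{\otimes 2}}\otimes_H\Phi)\big([x*y]_T\big)\\
&\quad+\psi_{T(x)}(\Phi(y))-((12)\otimes_H\mathrm{id}_M)\psi_{T(y)}(\Phi(x))\\
&\quad+(\mathrm{id}_{H^{\otimes 2}}\otimes_H S)\big(((12)\otimes_H\mathrm{id}_M)\psi_y(\Phi(x))-\psi_x(\Phi(y))\big)+[\Phi(x)*\Phi(y)]_M,
\end{align*}
with $[x*y]_T$ as in Proposition \ref{inducedrb}. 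This is the left side of \eqref{E2} \emph{minus} $(\mathrm{id}_{H^{\otimes 2}}\otimes_H S\circ\Phi)([x*y]_L)$. So, granting \eqref{E2}, the defect equals $-(\mathrm{id}_{H^{\otimes 2}}\otimes_H S\circ\Phi)([x*y]_L)$, which need not vanish.

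The same extra term breaks your $\Upsilon$ direction, because the Lemma you cite is false as stated. Applying the Rota--Baxter identity of $U$ to $s(x),s(y)$ and substituting $U(s(x))=s(T(x))+\Phi(x)$ gives exactly ``the displayed expression $=0$'', not \eqref{E2}. Here is a concrete counterexample:
\begin{itemize}
\item take $H=\mathbb{k}$, $\lambda=0$, a Lie algebra $L$ with $T=0$, and $M=\mathbb{k}$ abelian with $S=\mathrm{id}_M$;
\item let $R=L\times M$ with $U(x,u)=(0,u+\Phi(x))$, where $\Phi$ is linear with $\Phi([L,L])\neq0$.
\end{itemize}
This is a Rota--Baxter extension. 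With the section $s(x)=(x,0)$ it gives $\chi=0$ and $\psi=0$, yet \eqref{E2} reduces to $\Phi([x,y])=0$.

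The repair is to replace \eqref{E2} by the displayed identity in the definition of a non-abelian $2$-cocycle. Then the Lemma holds and your four-way splitting closes: the $(x,v)$ and $(u,y)$ parts by \eqref{E1} together with skew-symmetry of $[\cdot*\cdot]_M$, and the $(u,v)$ part by the Rota--Baxter identity for $S$. The rest of your argument is unchanged.

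The paper's own verification has the same defect. In its grouping $T2+T4+T5+T9$, the term $(\mathrm{id}_{H^{\otimes 2}}\otimes_H S)(\mathrm{id}_{H^{\otimes 2}}\otimes_H\Phi)([x*y]_L)$ carries the sign opposite to \eqref{E2}. It then silently disappears when the result is rewritten as $(\mathrm{id}_{H^{\otimes 2}}\otimes_H O)(\cdots)$. Your planned sanity check with $\chi=0$, $\Phi=0$ would not catch this, since the offending term is linear in $\Phi$.
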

\begin{proof}
Let $R_U$ and $R'_{U'}$ be two equivalent non-abelian extensions of $L_T$ by $M_S$. If $s: L \to R$ is a section of the projection map, then $s' := \phi \circ s$ is a section of the projection map $proj'$ for $R'$. Let $(\chi', \psi', \Phi')$ be the non-abelian $2$-cocycle corresponding to the extension $R'_{U'}$ and section $s'$, then we have 
\begin{align*}
\chi'(x, y) &= {[s'(x) * s'(y)]}_{R'} - (\mathrm{id}_{H^{\otimes 2}} \otimes_H s')({[x * y]}_L) \\
&= {[(\phi \circ s)(x) * (\phi \circ s)(y)]}_{R'} - (\mathrm{id}_{H^{\otimes 2}} \otimes_H (\phi \circ s))({[x * y]}_L) \\
&= (\mathrm{id}_{H^{\otimes 2}} \otimes_H \phi)\left({[s(x) * s(y)]}_R - (\mathrm{id}_{H^{\otimes 2}} \otimes_H \phi)(\mathrm{id}_{H^{\otimes 2}} \otimes_H s)({[x * y]}_L)\right)\\& = (\mathrm{id}_{H^{\otimes 2}} \otimes_H \phi)\chi(x, y)\\&=\chi(x, y) \quad \text{(as $\phi|_M = \mathrm{id}_M$)},
\end{align*}
\begin{align*}
\psi' _x(u) &= {[s'(x) *u]}_{R'} = {[(\phi \circ s)(x) *u]}_{R'}= {[(\phi \circ s)(x) * \phi(u)]}_{R'} \quad \text{(as $\phi(u)=u$)}\\&
= (\mathrm{id}_{H^{\otimes 2}} \otimes_H \phi)({[s(x) *u]}_R) = (\mathrm{id}_{H^{\otimes 2}} \otimes_H \phi)\psi _x(u)=\psi _x(u)\quad \text{(as $\phi|_M = \mathrm{id}_M$)},
\end{align*}
\begin{align*}
\Phi'(x) &= U(s'(x)) - s'(T(x)) \\
&= U(\phi(s(x))) - \phi(s(T(x))) \\
&= \phi(U(s(x))) - \phi(s(T(x))) \quad \text{(since $\phi$ is a morphism of Rota--Baxter pseudoalgebras)} \\
&= \phi\big(U(s(x)) - s(T(x))\big) \\
&= \phi(\Phi(x)) = \Phi(x) \quad \text{(as $\Phi(x) \in M$ and $\phi|_M = \mathrm{id}_M$).}
\end{align*}

Thus, equivalent extensions give the same cohomology class in $H^2_{\text{nab}}(L_T, M_S)$. In other words, we say that there is a well-defined map $\Upsilon : \mathrm{Ext}_{\mathrm{nab}}(L_T, M_S) \to H^2_{\mathrm{nab}}(L_T, M_S)$ which assigns an equivalence class of extensions to the equivalence class of the corresponding non-abelian $2$-cocycles.

Conversely, given a non-abelian $2$-cocycle $(\chi, \psi, \Phi)$, define the $H$-module $ J := L \oplus M$ equipped with the pseudobracket:
\begin{align*}
{[(x,u) * (y, v)]}_ J &:= \left({[x * y]}_L, \psi _x(v) - ((12)\otimes_H \mathrm{id}_M)\psi_y(u) + \chi(x, y) + {[u * v]}_M\right)
\end{align*}
for $(x,u), (y, v) \in  J$. The cocycle conditions ensure that $(J,{[.*.]}_J)$ is a    Lie $H$-pseudoalgebra.

We further define an $ H$-linear map $ O :  J \to  J$ by
\begin{align*}
 O(x,u) := (T(x), S(u) + \Phi(x)),\quad \text{ for all } (x,u) \in  J.
\end{align*} Then 
\begin{small}
    \begin{align*} &{ [O(x,u) * O(y, v)]}_ J \\
 &= {\left[\left( T(x), S(u) + \Phi(x) \right) * \left( T(y), S(v) + \Phi(y) \right)\right]}_ J \\
 &= \Bigg( {[T(x) * T(y)]}_L, \\
 &\quad \overbrace{\psi_{T(x)}(S(v))}^{T1} + \overbrace{\psi_{T(x)}(\Phi(y))}^{T2} - \overbrace{((12) \otimes_H \mathrm{id}_M)\psi_{T(y)}(S(u))}^{T3} - \overbrace{((12) \otimes_H \mathrm{id}_M)\psi_{T(y)}(\Phi(x))}^{T4} \\
 &\quad + \overbrace{\chi(T(x), T(y))}^{T5} + \overbrace{{[S(u) * S(v)]}_M}^{T6} + \overbrace{{[S(u) * \Phi(y)]}_M}^{T7} + \overbrace{{[\Phi(x) * S(v)]}_M}^{T8} + \overbrace{{[\Phi(x) * \Phi(y)]}_M}^{T9} \Bigg)\\
 &= \Bigg((\mathrm{id}_{H^{\otimes 2}} \otimes_H T)({[T(x)*y]}_L+{[x*T(y)]}_L+\lambda({[x*y]}_L)), \\
 &\quad \overbrace{(\mathrm{id}_{H^{\otimes 2}} \otimes_H S)(\psi_{T(x)}(v) + \psi _x(S(v)) + \lambda \psi _x(v)) + (\mathrm{id}_{H^{\otimes 2}} \otimes_H S)({[\Phi(x) * v]}_M)}^{T1+T8} \\
 &\quad - \overbrace{(\mathrm{id}_{H^{\otimes 2}} \otimes_H S)(((12)\otimes_H \mathrm{id}_M)\psi_{T(y)}(u) + ((12) \otimes_H \mathrm{id}_M)\psi_y(S(u)) + \lambda ((12) \otimes_H \mathrm{id}_M)\psi_y(u)) - (\mathrm{id}_{H^{\otimes 2}} \otimes_H S)({[\Phi(y) *u]}_M)}^{T3+T7} \\
 &\quad + \overbrace{(\mathrm{id}_{H^{\otimes 2}} \otimes_H S)({[S(u) * v]}_M +{ [u * S(v)]}_M + \lambda {[u * v]}_M)}^{T6} \\
 &\quad + \overbrace{(\mathrm{id}_{H^{\otimes 2}} \otimes_H S)(\chi(T(x), y) + \chi(x, T(y)) + \lambda \chi(x, y)) + (\mathrm{id}_{H^{\otimes 2}} \otimes_H \Phi)({[T(x) * y]}_L + {[x * T(y)]}_L + \lambda {[x * y]}_L)}^{T2+T4+T5+T9} \\
 &\quad \overbrace{ +(\mathrm{id}_{H^{\otimes 2}} \otimes_H S)(\psi _x(\Phi(y)) - ((12)\otimes_H \mathrm{id}_M)\psi_y(\Phi(x)) + (\mathrm{id}_{H^{\otimes 2}} \otimes_H \Phi)({[x * y]}_L)) \Bigg) }^{T2+T4+T5+T9} \\
 &= \Bigg( (\mathrm{ id}_{H^{\otimes 2}} \otimes_H T)({[T(x)* y]}_L), \\
&\quad (\mathrm{ id}_{H^{\otimes 2}} \otimes_H S)\left(\psi_{T(x)}(v) - ((12) \otimes_H \mathrm{id}_M)\psi_y(S(u) + \Phi(x)) + \chi(T(x), y) + {[(S(u) + \Phi(x)) * v]}_M\right)  + (\mathrm{id}_{H^{\otimes 2}} \otimes_H \Phi)({[T(x) * y]}_L) \Bigg) \\
&\quad + \Bigg((\mathrm{ id}_{H^{\otimes 2}} \otimes_H T)({[x * T(y)]}_L), \\
&\quad (\mathrm{ id}_{H^{\otimes 2}} \otimes_H S)\left(\psi _x(S(v) + \Phi(y)) - ((12)\otimes_H \mathrm{id}_M)\psi_{T(y)}(u) + \chi(x, T(y)) + {[u * (S(v) + \Phi(y))]}_M\right)  + (\mathrm{id}_{H^{\otimes 2}} \otimes_H \Phi)({[x * T(y)]}_L) \Bigg) \\
&\quad + \lambda\Bigg((\mathrm{id}_{H^{\otimes 2}} \otimes_H T)({[x * y]}_L), \\
&\quad (\mathrm{id}_{H^{\otimes 2}} \otimes_H S)\left(\psi _x(v) - ((12)\otimes_H \mathrm{id}_M)\psi_y(u) + \chi(x, y) + {[u * v]}_M\right) + (\mathrm{id}_{H^{\otimes 2}} \otimes_H \Phi)({[x * y]}_L) \Bigg) \\
&= (\mathrm{ id}_{H^{\otimes 2}} \otimes_H O)\left({[T(x) * y]}_L, \psi_{T(x)}(v) - ((12)\otimes_H \mathrm{id}_M)\psi_y(S(u) + \Phi(x)) + \chi(T(x), y) + {[(S(u) + \Phi(x)) * v]}_M\right) \\
&\quad + (\mathrm{id}_{H^{\otimes 2}} \otimes_H O)\left({[x * T(y)]}_L, \psi _x(S(v) + \Phi(y)) - ((12)\otimes_H \mathrm{id}_M)\psi_{T(y)}(u) + \chi(x, T(y)) + {[u * (S(v) + \Phi(y))]}_M\right)\\& \quad+ \lambda (\mathrm{id}_{H^{\otimes 2}} \otimes_H O)\left({[x * y]}_L, \psi _x(v) - ((12) \otimes_H \mathrm{id}_M)\psi_y(u) + \chi(x, y) + {[u * v]}_M\right) \\
&= (\mathrm{id}_{H^{\otimes 2}} \otimes_H O)\left({[O(x,u) * (y, v)]}_J + {[(x,u) * O(y, v)]}_J + \lambda {[(x,u) * (y, v)]}_J\right).
\end{align*}
\end{small} Thus, we see that $O$ is a Rota--Baxter operator on $J = L \oplus M $ and the corresponding Rota--Baxter Lie $H$-pseudoalgebra $(J, {[\cdot * \cdot]}_J, O) $ is denoted by $ J_O $.
 
Moreover, one can easily verify that the exact sequence $0 \longrightarrow M_S \xrightarrow{\mathrm{inc}} J_O \xrightarrow{\mathrm{proj}} L_T \longrightarrow 0$ is a non-abelian extension of the Rota--Baxter Lie $H$-pseudoalgebra $L_T$ by $M_S$, where $\mathrm{inc}(u) = (0,u)$ and $\mathrm{proj}(x,u) = x$ for all $(x,u) \in  J$ and $u \in M$.

Next, let $(\chi, \psi, \Phi)$ and $(\chi', \psi', \Phi')$ be two equivalent non-abelian $2$-cocycles, i.e., there exists an $H$-linear map $\tau : L \to M$,  such that the identities in Eqs. \eqref{equivalent1}, \eqref{equivalent2}, and \eqref{equivalent3} hold. Let $J'_{O'}$ be the Rota--Baxter Lie $H$-pseudoalgebra deformed by the $2$-cocycle $(\chi', \psi', \Phi')$. Note that the Lie $H$-pseudoalgebra $J' = L \oplus M$ is equipped with the pseudobracket given by
\[
{[(x,u) * (y, v)]}_{J'} := \left({[x * y]}_L, \psi' _x(v) - ((12)\otimes_H \mathrm{id}_M)\psi'_y(u) + \chi'(x, y) + {[u* v]}_M\right),
\]
for $(x,u), (y, v) \in  J'$. Moreover, the map $O' : J' \to J'$ is given by
\[
O'(x,u) := (T(x), S(u) + \Phi'(x)) \quad \text{for all } (x,u) \in J'.
\]
We now define a map $\phi : L \oplus M \to L \oplus M$ by
\[
\phi(x,u) := (x,u + \tau(x)) \quad \text{for all } (x,u) \in L \oplus M.
\]
Then, by a straightforward calculation, we show that
\begin{align*}
&(\mathrm{ id}_{H^{\otimes 2}} \otimes_H \phi )({[(x,u) * (y, v)]}_J) \\
&=(\mathrm{ id}_{H^{\otimes 2}} \otimes_H \phi )\left({[x * y]}_L, \psi _x(v) - ((12)\otimes_H \mathrm{id}_M)\psi_y(u) + \chi(x, y) + {[u* v]}_M\right) \\
&= \left({[x * y]}_L, (\mathrm{id}_{H^{\otimes 2}} \otimes_H \tau)({[x * y]}_L) + \psi _x(v) - ((12)\otimes_H \mathrm{id}_M)\psi_y(u) + \chi(x, y) + {[u * v]}_M\right) \\
&= \left({[x * y]}_L, (\mathrm{id}_{H^{\otimes 2}} \otimes_H \tau)({[x * y]}_L) + \psi' _x(v) + {[\tau(x) * v]}_M - ((12)\otimes_H \mathrm{id}_M)\psi'_y(u) - ((12)\otimes_H \mathrm{id}_M){[\tau(y) *u]}_M \right. \\
&\quad \left. + \chi'(x, y) + \psi' _x(\tau(y)) - ((12)\otimes_H \mathrm{id}_M)\psi'_y(\tau(x)) - (\mathrm{id}_{H^{\otimes 2}} \otimes_H \tau)({[x * y]}_L) + {[\tau(x) * \tau(y)]}_M + {[u * v]}_M\right) \\
&= \left({[x * y]}_L, \psi' _x(v) + \psi' _x(\tau(y)) - ((12)\otimes_H \mathrm{id}_M)\psi'_y(\tau(x)) - ((12)\otimes_H \mathrm{id}_M)\psi'_y(u) \right. \\
&\quad \left. + {[\tau(x) * v]}_M - ((12)\otimes_H \mathrm{id}_M){[\tau(y) *u]}_M + \chi'(x, y) + {[\tau(x) * \tau(y)]}_M + {[u * v]}_M\right) \\
&={ [(x,u + \tau(x)) * (y, v + \tau(y))]}_{J'} \\
&= {[\phi(x,u) * \phi(y, v)]}_{J'}.
\end{align*}
Additionally,
\begin{align*}
(O' \circ \phi)(x,u) &= O'(x,u + \tau(x)) \\
&= (T(x), S(u) + S(\tau(x)) + \Phi'(x)) \\
&= (T(x), S(u) + \tau(T(x)) + \Phi(x)) \quad (\text{by Eq. \eqref{equivalent3}}) \\
&= \phi(T(x), S(u) + \Phi(x)) = (\phi \circ O)(x,u).
\end{align*}
Hence, the map $\phi$ defines an equivalence between the two non-abelian extensions. Therefore, we obtain a well-defined map $\Lambda : H^2_{\mathrm{nab}}(L_T, M_S) \to \mathrm{Ext}_{\mathrm{nab}}(L_T, M_S)$. Finally, it is straightforward to verify that the maps $\Upsilon$ and $\Lambda$ are inverses of each other. This completes the proof.
\end{proof}
\section{Automorphisms of Rota--Baxter Lie $H$-pseudoalgebras and their inducibility}\label{sec:automorphisms}
In this section, we study the inducibility of a pair of automorphisms in the context of a non-abelian extension of Rota--Baxter Lie $H$-pseudoalgebras and present its relation with the Wells map. 

Let $L_T = (L, {[\cdot * \cdot]}_L, T)$ and $M_S = (M, {[\cdot * \cdot]}_M, S)$ be two Rota--Baxter Lie $H$-pseudoalgebras and 
\begin{align*}
 0 \longrightarrow M_S \xrightarrow{\mathrm{inc}} R_U \xrightarrow{\mathrm{proj}} L_T \longrightarrow 0
\end{align*} 
be a non-abelian extension of $L_T$ by $M_S$ with the section $s$ of $\mathrm{proj}$-map. Let $\mathrm{Aut}_M(R_U)$ be the set of all Rota--Baxter Lie $H$-pseudoalgebra automorphisms $\gamma \in \mathrm{Aut}(R_U)$ that satisfies $\gamma|_M \subset M$, i.e. $M$ is an invariant $H$-module. More explicitly,
\[
\mathrm{Aut}_M(R_U) := \{\gamma \in \mathrm{Aut}(R_U) \mid \gamma(M) = M\}.
\]

For any section $s : L \to R$ of the $\mathrm{proj}$-map, for any $\gamma \in \mathrm{Aut}_M(R_U)$, we can define an $H$-module map $\bar{\gamma}: L \to L$ by $\bar{\gamma}(x) := \mathrm{proj} \circ \gamma \circ s(x)$, for $x \in L$. It is easy to verify that the map $\bar{\gamma}$ is independent of the choice of the section $s$. Moreover, $\mathrm{proj}$ is a projection on $L$ and $\gamma$ preserves $L$, so $\bar{\gamma}$ is a bijection on $L$. For any $x, y \in L$, we have
\begin{align*}
(\mathrm{ id}_{H^{\otimes 2}} \otimes_H \bar{\gamma} )({[x* y]}_L) &= (\mathrm{ id}_{H^{\otimes 2}} \otimes_H \mathrm{proj} \circ \gamma \circ s) ({[x* y]}_L) \\&= (\mathrm{ id}_{H^{\otimes 2}} \otimes_H \mathrm{proj} \circ \gamma) ((\mathrm{ id}_{H^{\otimes 2}} \otimes_H s){[x* y]}_L) \\
&= (\mathrm{ id}_{H^{\otimes 2}} \otimes_H \mathrm{proj} \circ \gamma)({[s(x) * s(y)]}_R - \chi(x, y)) \\
&= (\mathrm{ id}_{H^{\otimes 2}} \otimes_H \mathrm{proj} \circ \gamma)({[s(x) * s(y)]}_R) \quad (\because \gamma|_M \subset M \text{ and } \mathrm{proj}|_M = 0) \\
&= {[\mathrm{proj} \circ \gamma \circ s(x) * \mathrm{proj} \circ \gamma \circ s(y)]}_L = {[\bar{\gamma}(x) * \bar{\gamma}(y)]}_L,
\end{align*}
and 
\begin{align*}
(T \circ \bar{\gamma} - \bar{\gamma} \circ T)(x)
&= T(\mathrm{proj}(\gamma(s(x)))) - \mathrm{proj}(\gamma(s(T(x)))) \\
&= \mathrm{proj}(U(\gamma(s(x)))) - \mathrm{proj}(\gamma(U(s(x)))) \\
&= \mathrm{proj}\big( \gamma(U(s(x)) - s(T(x))) \big) \\
&= \mathrm{proj}(\gamma(\Phi(x))) = 0.
\end{align*}   This shows that the map $\bar{\gamma}: L_T \to L_T$ is an automorphism of the Rota--Baxter Lie $H$-pseudoalgebra $L_T$. In other words, $\bar{\gamma} \in \mathrm{Aut}(L_T)$. Observe that $\overline{\gamma_1 \gamma_2} = \overline{\gamma_1} \ \overline{\gamma_2}$.

Now we obtain a group homomorphism
\[
\Pi : \mathrm{Aut}_M(R_U) \to \mathrm{Aut}(M_S) \times \mathrm{Aut}(L_T) \quad \text{given by} \quad \gamma \mapsto (\gamma|_M, \bar{\gamma}).
\]
In general, we say any pair $(\alpha, \beta) \in \mathrm{Aut}(M_S) \times \mathrm{Aut}(L_T)$ is called inducible, if $(\alpha, \beta)$ lies in the image of $\Pi$ defined above.
\begin{proposition}\label{prop6.1}
Consider the non-abelian extension of Rota--Baxter Lie $H$-pseudoalgebras. For a section $s$, suppose the extension corresponds to the non-abelian $2$-cocycle $(\chi, \psi, \Phi)$. Then a pair $(\alpha, \beta) \in \mathrm{Aut}(M_S) \times \mathrm{Aut}(L_T)$ of Rota--Baxter Lie $H$-pseudoalgebra automorphisms is inducible if and only if there exists an $H$-linear map $\eta : L \to M$ satisfying the following conditions:
\begin{align}
(\mathrm{id}_{H^{\otimes 2}} \otimes_H \alpha)(\psi_x(u)) - \psi_{\beta(x)}(\alpha(u)) &= {[\eta(x) * \alpha(u)]}_M, \label{eq:16} \\
(\mathrm{id}_{H^{\otimes 2}} \otimes_H \alpha)(\chi(x, y)) - \chi(\beta(x), \beta(y)) &= \psi_{\beta(x)}(\eta(y)) - ((12)\otimes_H \mathrm{id}_M)\psi_{\beta(y)}(\eta(x))\nonumber \\
&\quad - (\mathrm{id}_{H^{\otimes 2}} \otimes_H \eta)({[x* y]}_L) + {[\eta(x) * \eta(y)]}_ M, \label{eq:17} \\
 \alpha(\Phi(x)) - \Phi(\beta(x)) &=  S(\eta(x)) - \eta(T(x)). \label{eq:18}
\end{align}
for all $x, y \in L$ and $u \in M$.
\end{proposition}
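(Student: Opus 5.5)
We prove both directions using the decomposition $R=s(L)\oplus M$ of $H$-modules furnished by the section $s$. Every element of $R$ is written uniquely as $s(x)+u$ with $x\in L$, $u\in M$. The strategy is the standard one from the Lie algebra case: an automorphism inducing $(\alpha,\beta)$ is determined by where it sends $s(x)$, and the discrepancy from $s(\beta(x))$ is the map $\eta$.

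For necessity, suppose $\gamma\in\mathrm{Aut}_M(R_U)$ with $\gamma|_M=\alpha$ and $\bar\gamma=\beta$. Since $\mathrm{proj}(\gamma(s(x)))=\beta(x)$, the element $\gamma(s(x))-s(\beta(x))$ lies in $M$. We define $\eta(x):=\gamma(s(x))-s(\beta(x))$, which is $H$-linear because $\gamma$, $s$ and $\beta$ are. The three identities then come from applying $\gamma$ to the structure maps.

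\begin{enumerate}
\item For \eqref{eq:16}, apply $\gamma$ to $\psi_x(u)={[s(x)*u]}_R$. This gives $(\mathrm{id}\otimes_H\alpha)\psi_x(u)={[s(\beta x)+\eta(x)*\alpha(u)]}_R=\psi_{\beta(x)}(\alpha(u))+{[\eta(x)*\alpha(u)]}_M$.
\item For \eqref{eq:17}, apply $\mathrm{id}\otimes_H\gamma$ to $\chi(x,y)={[s(x)*s(y)]}_R-(\mathrm{id}\otimes_H s){[x*y]}_L$. On the right, substitute $\gamma s=s\beta+\eta$ and expand ${[s\beta x+\eta x * s\beta y+\eta y]}_R$ bilinearly. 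Use skew-symmetry to rewrite ${[\eta(x)*s(\beta y)]}_R=-((12)\otimes_H\mathrm{id})\psi_{\beta(y)}(\eta(x))$. Finally, use ${[\beta x*\beta y]}_L=(\mathrm{id}\otimes_H\beta){[x*y]}_L$ to recognize $\chi(\beta x,\beta y)$.
\item For \eqref{eq:18}, use $\gamma U=U\gamma$ and $\beta T=T\beta$ to compute
$\alpha(\Phi(x))=\gamma(U s(x))-\gamma(sT(x))=U(s\beta x+\eta x)-s\beta T(x)-\eta(Tx)=\Phi(\beta x)+S(\eta(x))-\eta(T(x))$.
\end{enumerate}

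For sufficiency, given $\eta$ satisfying \eqref{eq:16}--\eqref{eq:18}, define
$$\gamma(s(x)+u):=s(\beta(x))+\eta(x)+\alpha(u).$$
This map is $H$-linear and bijective, with inverse $s(y)+v\mapsto s(\beta^{-1}y)+\alpha^{-1}(v-\eta(\beta^{-1}y))$. By construction it satisfies $\gamma|_M=\alpha$ and $\bar\gamma=\beta$. Three checks remain.

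\begin{enumerate}
\item Commutation with $U$: using $U(s(x)+u)=s(Tx)+\Phi(x)+S(u)$, the identity $\gamma U=U\gamma$ reduces to \eqref{eq:18} together with $\alpha S=S\alpha$ and $\beta T=T\beta$.
\item Bracket preservation: by bilinearity and $H^{\otimes2}$-linearity, it suffices to check ${[\gamma a*\gamma b]}_R=(\mathrm{id}\otimes_H\gamma){[a*b]}_R$ on the three kinds of pairs $(u,v)$, $(s(x),u)$ and $(s(x),s(y))$.
\begin{itemize}
\item The pair $(u,v)$ holds because $\alpha$ is a morphism of $M$.
\item The pair $(s(x),u)$ is exactly \eqref{eq:16} read backwards.
\item The pair $(s(x),s(y))$ reduces, via the definition of $\chi$, to \eqref{eq:17} combined with $\beta$ being a morphism.
\end{itemize}
The pair $(u,s(x))$ then follows from skew-symmetry.
\end{enumerate}

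The only real obstacle is bookkeeping of the permutation factors $(12)\otimes_H\mathrm{id}$ in the $(s(x),s(y))$ case. There we must verify that the cross terms ${[\eta(x)*s(\beta y)]}_R$ produce precisely the term $-((12)\otimes_H\mathrm{id})\psi_{\beta(y)}(\eta(x))$ appearing in \eqref{eq:17}. This uses only the skew-symmetry axiom of the pseudobracket, so it is routine. The rest is direct substitution mirroring the computations \eqref{equivalent1}--\eqref{equivalent3}.
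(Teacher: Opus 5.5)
Your proposal is correct and follows the paper's proof essentially step for step. For necessity you take $\eta := \gamma\circ s - s\circ\beta$, and for sufficiency you take $\gamma(s(x)+u) := s(\beta(x)) + \eta(x) + \alpha(u)$, with \eqref{eq:16}--\eqref{eq:17} giving bracket preservation and \eqref{eq:18} giving $\gamma U = U\gamma$. Your explicit inverse and your case split over the pairs $(u,v)$, $(s(x),u)$, $(s(x),s(y))$ are, if anything, slightly more careful than the paper's single combined computation and its brief injectivity claim.
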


\begin{proof}
Let $(\alpha, \beta)$ be an inducible pair, i.e., there exists a Rota--Baxter Lie $H$-pseudoalgebra automorphism $\gamma \in \mathrm{Aut}_M(R_U)$ such that $\gamma|_M = \alpha$ and $\mathrm{proj} \circ \gamma \circ s = \beta$. For any $x \in L$, we observe that $(\gamma \circ s - s \circ \beta)(x) \in \ker(\mathrm{proj})$, which in turn implies that $(\gamma \circ s - s \circ \beta)(x) \in M$. We define a map $\eta : L \to M$ by 
\[
\eta(x) := (\gamma \circ s - s \circ \beta)(x), \quad \text{for } x \in L.
\]
We observe that, for $x \in L$ and $u \in M$ 
\begin{align*}
(\mathrm{id}_{H^{\otimes 2}} \otimes_H \alpha)(\psi_x(u)) - \psi_{\beta(x)}(\alpha(u)) 
&= (\mathrm{id}_{H^{\otimes 2}} \otimes_H \alpha)({[s(x) * u]}_R) - {[s(\beta(x)) * \alpha(u)]}_R \\
&= {[\gamma(s(x)) * \gamma(u)]}_R - {[s(\beta(x)) * \alpha(u)]}_R \quad (\text{as } \gamma|_M = \alpha) \\
&= {[\gamma(s(x)) * \alpha(u)]}_R - {[s(\beta(x)) * \alpha(u)]}_R \\
&= {[(\gamma(s(x)) - s(\beta(x))) * \alpha(u)]}_R \\
&= {[\eta(x) * \alpha(u)]}_M,
\end{align*}
  
\begin{align*}
&\psi_{\beta(x)}(\eta(y)) - ((12)\otimes_H \mathrm{id}_M)\psi_{\beta(y)}(\eta(x)) - (\mathrm{id}_{H^{\otimes 2}} \otimes_H \eta)({[x* y]}_L) + {[\eta(x) * \eta(y)]}_M \\
&= {[s(\beta(x)) * (\gamma \circ s - s \circ \beta)(y)]}_R - ((12)\otimes_H \mathrm{id}_M){[s(\beta(y)) * (\gamma \circ s - s \circ \beta)(x)]}_R \\
&\quad - (\mathrm{id}_{H^{\otimes 2}} \otimes_H (\gamma \circ s - s \circ \beta))({[x* y]}_L) + {[(\gamma \circ s - s \circ \beta)(x) * (\gamma \circ s - s \circ \beta)(y)]}_M \\
&= \cancelto{2}{{[s(\beta(x)) * \gamma(s(y))]}_R} -\cancelto{1}{ {[s(\beta(x)) * s(\beta(y))]}_R} \\
&\quad- \cancelto{3}{((12)\otimes_H \mathrm{id}_M){[s(\beta(y)) * \gamma(s(x))]}_R}  + ((12)\otimes_H \mathrm{id}_M){[s(\beta(y)) * s(\beta(x))]}_R\\&\quad -
(\mathrm{id}_{H^{\otimes 2}} \otimes_H \gamma)((\mathrm{id}_{H^{\otimes 2}} \otimes_H  s )({[x* y]}_L)) + ((\mathrm{id}_{H^{\otimes 2}} \otimes_H  s )(\mathrm{id}_{H^{\otimes 2}} \otimes_H \beta )({[x* y]}_L) \\
&\quad + {[\gamma(s(x)) * \gamma(s(y))]}_R - \cancelto{3}{{[\gamma(s(x)) * s(\beta(y))]}_R} - \cancelto{2}{{[s(\beta(x)) * \gamma(s(y))]}_R} + \cancelto{1}{{[s(\beta(x)) * s(\beta(y))]}_R} \\
&
=-(\mathrm{id}_{H^{\otimes 2}} \otimes_H id_M){[s(\beta(x)) * s(\beta(y))]}_R\\&\quad -
(\mathrm{id}_{H^{\otimes 2}} \otimes_H \gamma)((\mathrm{id}_{H^{\otimes 2}} \otimes_H  s )({[x* y]}_L)) + ((\mathrm{id}_{H^{\otimes 2}} \otimes_H  s )(\mathrm{id}_{H^{\otimes 2}} \otimes_H \beta )({[x* y]}_L) \\
&\quad + {[\gamma(s(x)) * \gamma(s(y))]}_R  \\
&= -{[s(\beta(x)) * s(\beta(y))]}_R+ ((\mathrm{id}_{H^{\otimes 2}} \otimes_H  s )(\mathrm{id}_{H^{\otimes 2}} \otimes_H \beta )({[x* y]}_L)\\&\quad -
(\mathrm{id}_{H^{\otimes 2}} \otimes_H \gamma)((\mathrm{id}_{H^{\otimes 2}} \otimes_H  s )({[x* y]}_L))   + {[\gamma(s(x)) * \gamma(s(y))]}_R  \\
&= - \chi(\beta(x), \beta(y))+ (\mathrm{id}_{H^{\otimes 2}} \otimes_H \alpha)(\chi(x, y)),
\end{align*}
and
\begin{align*}
&\alpha(\Phi(x)) - \Phi(\beta(x)) \\
&= \alpha\big(U(s(x)) - s(T(x))\big) - \big(U(s(\beta(x))) - s(T(\beta(x)))\big) \\
&= \gamma\big(U(s(x)) - s(T(x))\big) - U(s(\beta(x))) + s(T(\beta(x))) \quad \text{(since $\alpha = \gamma|_M$)} \\
&= \gamma(U(s(x))) - \gamma(s(T(x))) - U(s(\beta(x))) + s(T(\beta(x))) \\
&= U(\gamma(s(x))) - U(s(\beta(x))) - \gamma(s(T(x))) + s(T(\beta(x))) \quad \text{(as $\gamma$ commutes with $U$)} \\
&= U(\gamma(s(x)) - s(\beta(x))) - \big(\gamma(s(T(x))) - s(T(\beta(x)))\big) \\
&= U(\eta(x)) - \eta(T(x)) \quad \text{(by definition of $\eta$)} \\
&= S(\eta(x)) - \eta(T(x)) \quad \text{(since $U|_M = S$).}
\end{align*}
 
Conversely, suppose an $H$-linear map $\eta : L \to M$ satisfies Eqs. \eqref{eq:16}, \eqref{eq:17}, and \eqref{eq:18}. First, observe that, using the section $s$, the $H$-module $R$ can be identified with $s(L) \oplus M$. Equivalently, any element $f \in R$ can be uniquely written as $f = s(x) + u$, for some $x \in L$ and $u \in M$. Using this, we define a map $\gamma : R \to R$ by
\[
\gamma(f) = \gamma(s(x) + u) = s(\beta(x)) + (\alpha(u) + \eta(x)), \quad \text{for } f = s(x) + u \in R.
\]
Since $s$, $\alpha$, $\beta$ are all $H$-linear injections, we can easily verify that $\gamma$ is also an $H$-linear injective map. The maps $\alpha$ and $\beta$ are also surjective, which implies that $\gamma$ is also a surjective map. Thus, $\gamma$ becomes a bijective map. By using Eqs. \eqref{eq:16} and \eqref{eq:17}, it is easy to show that $\gamma : R \to R$ is a Lie $H$-pseudoalgebra homomorphism. 
 
For all $f = s(x) + u, g = s(y) + v \in R$, we have

\begin{align*}
 &(\mathrm{id}_{H^{\otimes 2}} \otimes_H \gamma){[f * g]}_R \\&= (\mathrm{id}_{H^{\otimes 2}} \otimes_H \gamma){[(s(x) + u )* (s(y) + v)]}_R \\
&=(\mathrm{id}_{H^{\otimes 2}} \otimes_H \gamma) \Big({[s(x) * s(y)]}_R + {[s(x) * v]}_R + {[u * s(y)]}_R + {[u * v]}_R \Big)\\
&= (\mathrm{id}_{H^{\otimes 2}} \otimes_H \gamma) \left((\mathrm{id}_{H^{\otimes 2}} \otimes_H s)({[x* y]}_L) + \chi(x,y)\right)\\& + (\mathrm{id}_{H^{\otimes 2}} \otimes_H \gamma) \psi_x(v) - (\mathrm{id}_{H^{\otimes 2}} \otimes_H \gamma) ((12)\otimes_H \mathrm{id}_M)\psi_y(u) + (\mathrm{id}_{H^{\otimes 2}} \otimes_H \gamma) {[u * v]}_M.
\\
&= (\mathrm{id}_{H^{\otimes 2}} \otimes_H s \circ \beta)({[x* y ]}_L) + (\mathrm{id}_{H^{\otimes 2}} \otimes_H \eta)({[x* y]}_L) + (\mathrm{id}_{H^{\otimes 2}} \otimes_H \alpha)(\chi(x,y)) \quad (\text{ Since } \gamma|_M = \alpha). \\& + (\mathrm{id}_{H^{\otimes 2}} \otimes_H \alpha)(\psi_x(v))  - ((12) \otimes_H \alpha)(\psi_y(u))+ {[\alpha(u) * \alpha(v)]}_M.
\end{align*} Now using Eq. \eqref{eq:16} we have $(\mathrm{id}_{H^{\otimes 2}} \otimes_H \alpha)(\psi_x(v)) = \psi_{\beta(x)}(\alpha(v)) + {[\eta(x) * \alpha(v)]}_M.$

And using Eq. \eqref{eq:17}, we have
\begin{align*}
(\mathrm{id}_{H^{\otimes 2}} \otimes_H \alpha)(\chi(x,y)) &= \chi(\beta(x), \beta(y)) + \psi_{\beta(x)}(\eta(y)) - ((12)\otimes_H \mathrm{id}_M)\psi_{\beta(y)}(\eta(x)) \\
&\quad - (\mathrm{id}_{H^{\otimes 2}} \otimes_H \eta)({[x* y]}_L) + {[\eta(x) * \eta(y)]}_M.
\end{align*}

After substituting the corresponding terms 
\begin{align*}
&= (\mathrm{id}_{H^{\otimes 2}} \otimes_H s \circ \beta)({[x* y]}_L) + \cancel{(\mathrm{id}_{H^{\otimes 2}} \otimes_H \eta)({[x* y]}_L)} \\
&\quad + \chi(\beta(x), \beta(y)) + \psi_{\beta(x)}(\eta(y)) - ((12)\otimes_H \mathrm{id}_M)\psi_{\beta(y)}(\eta(x)) - \cancel{(\mathrm{id}_{H^{\otimes 2}} \otimes_H \eta)({[x* y]}_L)} + {[\eta(x) * \eta(y)]}_M \\
&\quad + \psi_{\beta(x)}(\alpha(v)) + {[\eta(x) * \alpha(v)]}_M - ((12)\otimes_H \mathrm{id}_M)\psi_{\beta(y)}(\alpha(u)) - ((12)\otimes_H \mathrm{id}_M){[\eta(y) * \alpha(u)]}_M \\
&\quad + {[\alpha(u) * \alpha(v)]}_M \\
&= (\mathrm{id}_{H^{\otimes 2}} \otimes_H s)({[\beta(x) * \beta(y)]}_L) + \chi(\beta(x), \beta(y)) \\
&\quad + \psi_{\beta(x)}(\alpha(v) + \eta(y)) - ((12)\otimes_H \mathrm{id}_M)\psi_{\beta(y)}(\alpha(u) + \eta(x)) \\
&\quad + {[\alpha(u) + \eta(x) * \alpha(v) + \eta(y)]}_M \\
&= {[s(\beta(x)) * s(\beta(y))]}_R + {[s(\beta(x)) * (\alpha(v) + \eta(y))]}_R + {[(\alpha(u) + \eta(x)) * s(\beta(y))]}_R \\
&\quad + {[(\alpha(u) + \eta(x)) * (\alpha(v) + \eta(y))]}_M \\&= {[s(\beta(x)) + (\alpha(u) + \eta(x)) * s(\beta(y)) + (\alpha(v) + \eta(y))]}_R\\
&= {[\gamma(f) * \gamma(g)]}_R.
\end{align*}
Moreover, for any $f = s(x) + u \in R$, we observe that
\begin{align*}
\gamma(U(f)) 
&= \gamma(U(s(x) + u)) \\
&= \gamma(U(s(x)) + S(u)) \\
&= \gamma\big( s(T(x)) + \Phi(x) + S(u) \big) \quad (\text{as } \Phi = U \circ s - s \circ T) \\
&= \gamma(s(T(x))) + \gamma(\Phi(x)) + \gamma(S(u)) \\
&= s(\beta(T(x))) + \eta(T(x)) + \alpha(\Phi(x)) + \alpha(S(u)) \\
&= s(T(\beta(x))) + \eta(T(x)) + \alpha(\Phi(x)) + S(\alpha(u)) \quad (\text{since } T\beta = \beta T \text{ and } \alpha \in \mathrm{Aut}(M_S)) \\
&= \Big( s(T(\beta(x))) + \Phi(\beta(x)) \Big) + S(\alpha(u)) + \Big( \alpha(\Phi(x)) - \Phi(\beta(x)) + \eta(T(x)) \Big) \\
&= U(s(\beta(x))) + S(\alpha(u)) + S(\eta(x)) \quad (\text{by Eq.~}\eqref{eq:18}) \\
&= U(s(\beta(x))) + S(\alpha(u) + \eta(x)) \\
&= U\big( s(\beta(x)) + \alpha(u) + \eta(x) \big) \quad (\text{since } U|_M = S) \\
&= U(\gamma(f)).
\end{align*}
Finally, from the definition of $\gamma$, we have that $\gamma(M) \subset M$. Thus, $\gamma \in \mathrm{Aut}_M(R_U)$. Additionally, one can easily verify that $\gamma|_M = \alpha$ and $\mathrm{proj} \circ \gamma \circ s = \beta$. This shows that the pair $(\alpha, \beta)$ is inducible.
\end{proof}
Note that the necessary and sufficient condition derived in Proposition \ref{prop6.1} depends on the choice of the section map 
$s$. To obtain a criterion independent of this choice, we adapt a method originally developed by Wells \cite{Wells-Automorphisms} in the context of abstract group extensions.

Given any pair $(\alpha,\beta)\in \mathrm{Aut}(M_S) \times \mathrm{Aut}(L_T)$ of automorphisms, we define a new triple $(\chi^{(\alpha,\beta)}, \psi^{(\alpha,\beta)}, \Phi^{(\alpha,\beta)})$ of $H$-linear maps by
\begin{align}\label{newcocycle}
\chi^{(\alpha,\beta)}(x, y) &:= (\mathrm{id}_{H^{\otimes 2}} \otimes_H \alpha)\left(\chi(\beta^{-1}(x), \beta^{-1}(y))\right), \\
\psi^{(\alpha,\beta)}_x(u) &:= (\mathrm{id}_{H^{\otimes 2}} \otimes_H \alpha)\left(\psi_{\beta^{-1}(x)}(\alpha^{-1}(u))\right), \\
\Phi^{(\alpha,\beta)}(x) &:= \alpha \left(\Phi(\beta^{-1}(x))\right),
\end{align}
for $x, y \in L$ and $u \in M$.
\begin{lemma}
The triple $(\chi^{(\alpha,\beta)}, \psi^{(\alpha,\beta)}, \Phi^{(\alpha,\beta)})$ is a non-abelian $2$-cocycle on $L_T$ with values in $M_S$.
\end{lemma}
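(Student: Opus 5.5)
The plan is a transport-of-structure argument: show that the twisted triple is exactly the non-abelian $2$-cocycle of another extension of $L_T$ by $M_S$. By Theorem \ref{thm:classification} and the Lemma preceding Definition \ref{def:na_cocycle}, the cocycle attached to any non-abelian extension and any section satisfies \eqref{2cocycle1}, \eqref{2cocycle2}, \eqref{E1} and \eqref{E2}. So it suffices to exhibit such an extension whose cocycle is $(\chi^{(\alpha,\beta)}, \psi^{(\alpha,\beta)}, \Phi^{(\alpha,\beta)})$.

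\textbf{Step 1 (the twisted extension).} Take the same Rota--Baxter Lie $H$-pseudoalgebra $R_U$, but change the structure maps of the exact sequence to
\[
0 \longrightarrow M_S \xrightarrow{\mathrm{inc}\circ\alpha^{-1}} R_U \xrightarrow{\beta\circ \mathrm{proj}} L_T \longrightarrow 0 .
\]
Since $\alpha \in \mathrm{Aut}(M_S)$ and $\beta \in \mathrm{Aut}(L_T)$ are Rota--Baxter automorphisms, both maps are Rota--Baxter morphisms. Exactness is preserved because the image of $\mathrm{inc}\circ\alpha^{-1}$ is still $M$, which equals $\ker(\beta\circ\mathrm{proj})$.

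\textbf{Step 2 (the section and its cocycle).} Use the section $s' := s\circ\beta^{-1}$. Computing the cocycle of Definition \ref{def:cocycle_data}, we must now identify $M$ with $R$ through $\mathrm{inc}\circ\alpha^{-1}$, which means applying $\alpha$ to every $M$-valued output.

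\begin{itemize}
\item First component: $\alpha$ applied to $[s\beta^{-1}x * s\beta^{-1}y]_R - s\bigl([\beta^{-1}x*\beta^{-1}y]_L\bigr)$. Because $\beta$ is a bracket morphism, $(\mathrm{id}\otimes_H\beta^{-1})[x*y]_L = [\beta^{-1}x*\beta^{-1}y]_L$. This gives exactly $\chi^{(\alpha,\beta)}$.
\item Second component: $\alpha\bigl([s\beta^{-1}x * \alpha^{-1}u]_R\bigr) = \psi^{(\alpha,\beta)}_x(u)$.
\item Third component: $\alpha\bigl(U s\beta^{-1}x - s\beta^{-1} T x\bigr)$. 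Using $T\beta^{-1} = \beta^{-1}T$, this equals $\alpha\bigl(\Phi(\beta^{-1}x)\bigr) = \Phi^{(\alpha,\beta)}(x)$.
\end{itemize}
Hence the triple is a non-abelian $2$-cocycle.

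\textbf{Fallback (direct verification).} As a cross-check, or if one prefers to avoid identifying $M$ via $\alpha^{-1}$, I would substitute $x\mapsto\beta^{-1}x$, $y\mapsto\beta^{-1}y$, $u\mapsto\alpha^{-1}u$ into each identity \eqref{2cocycle1}--\eqref{E2} and apply $\mathrm{id}\otimes_H\alpha$. The ingredients are:
\begin{itemize}
\item $\alpha S = S\alpha$ and $\beta T = T\beta$;
\item $\alpha$ preserves $[\cdot*\cdot]_M$ and $\beta$ preserves $[\cdot*\cdot]_L$;
\item $\mathrm{id}\otimes_H\alpha$ commutes with the permutations $(12)$ and $(123)$ acting on the $H$-factors.
\end{itemize}

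\textbf{Main obstacle.} The delicate point is \eqref{E2}. It contains composite terms such as $\psi_{T(x)}(\Phi(y))$ and $(\mathrm{id}\otimes_H\Phi)([T x*y]_L)$, and one has to check that inserting $\alpha$ and $\alpha^{-1}$ telescopes correctly there. For example,
\[
\psi^{(\alpha,\beta)}_{Tx}\bigl(\Phi^{(\alpha,\beta)}(y)\bigr) = \alpha\Bigl(\psi_{T\beta^{-1}x}\bigl(\Phi(\beta^{-1}y)\bigr)\Bigr).
\]
The first route avoids this bookkeeping entirely, so I would present that one.
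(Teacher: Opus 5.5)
Your argument is correct relative to the paper's earlier results, and your main route differs from the paper's. The paper's proof is the single sentence ``direct verification using \eqref{2cocycle1}--\eqref{E2}''. That is your fallback: substitute $\beta^{-1}x,\beta^{-1}y,\alpha^{-1}u$, apply $\mathrm{id}_{H^{\otimes 2}}\otimes_H\alpha$, and use that $\alpha,\beta$ intertwine every structure map.

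You instead transport structure. The sequence $0\to M_S\xrightarrow{\mathrm{inc}\circ\alpha^{-1}}R_U\xrightarrow{\beta\circ\mathrm{proj}}L_T\to 0$ is again an extension. That the injection is not the literal inclusion is harmless: deriving the cocycle identities only uses that it identifies $M_S$, bracket and operator included, with the ideal $\ker(\beta\circ\mathrm{proj})$. Your computation of its cocycle for the section $s\circ\beta^{-1}$ is right.

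What each route buys:
\begin{itemize}
\item Your route gives all four identities at once with no bookkeeping; Theorem~\ref{thm:classification} is not needed for this.
\item It also explains the Wells map. By Theorem~\ref{thm:classification}, the twisted and original cocycles are equivalent iff the two extensions are. That means some $\Theta\in\mathrm{Aut}(R_U)$ satisfies $\Theta\circ\mathrm{inc}=\mathrm{inc}\circ\alpha^{-1}$ and $\beta\circ\mathrm{proj}\circ\Theta=\mathrm{proj}$, i.e.\ $\Theta^{-1}$ induces $(\alpha,\beta)$. So Theorem~\ref{thm6.4} comes for free.
\item The direct route needs no extension at all. It shows twisting preserves every triple satisfying the identities.
\item The obstacle you anticipate in \eqref{E2} is not real, since each term is natural. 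For example, $(\mathrm{id}_{H^{\otimes 2}}\otimes_H\Phi^{(\alpha,\beta)})({[T(x)*y]}_L)=(\mathrm{id}_{H^{\otimes 2}}\otimes_H\alpha\circ\Phi)({[T(\beta^{-1}x)*\beta^{-1}y]}_L)$, because $\beta^{-1}$ preserves the bracket and commutes with $T$.
\end{itemize}

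One caveat applies to both proofs equally. Each rests on the claim that the cocycle of an extension satisfies \eqref{2cocycle1}--\eqref{E2} as printed; you use it for the twisted extension, the paper for the original one. Two of those identities look misprinted:
\begin{itemize}
\item Already for $H=\mathbb{k}$, the Jacobi identity for $s(x),s(y),s(z)$ forces the opposite sign on the right-hand side of \eqref{2cocycle2}.
\item Expanding ${[U(s(x))*U(s(y))]}_R$ produces \eqref{E2} without the term $(\mathrm{id}_{H^{\otimes 2}}\otimes_H S)(\mathrm{id}_{H^{\otimes 2}}\otimes_H\Phi)({[x*y]}_L)$.
\end{itemize}
For a check of the second point, take $H=\mathbb{k}$, $L=\mathrm{span}(e_1,e_2)$ with $[e_1,e_2]=e_2$, and $M$ a copy of $L$ with zero bracket and the adjoint action. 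Take $\lambda=-1$, $T=0$, $S=\mathrm{id}_M$, and $U(x,u)=(0,u+[e_1,x])$ on $L\ltimes M$; this is a Rota--Baxter operator by the Jacobi identity. The cocycle of $s(x)=(x,0)$ then makes the printed left side of \eqref{E2} equal to $e_2$ at $(e_1,e_2)$. Your Steps 1--2 are unaffected, since they show the twisted triple is literally the cocycle of an extension. So your argument goes through verbatim once these identities are corrected.
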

\begin{proof}
This follows by direct verification using the cocycle conditions \eqref{2cocycle1}
to \eqref{E2}.
\end{proof}
\begin{definition}
The \emph{Wells map} $\mathfrak{W} : \mathrm{Aut}(M_S) \times \mathrm{Aut}(L_T) \to H^2_{\text{nab}}(L_T, M_S)$ is defined by \begin{align*}
    \mathfrak{W}(\alpha,\beta) = \left[(\chi^{(\alpha,\beta)}, \psi^{(\alpha,\beta)}, \Phi^{(\alpha,\beta)}) - (\chi, \psi, \Phi)\right],
\end{align*}
where the bracket denotes the equivalence class in non-abelian cohomology.
\end{definition}

Note that any non-abelian $2$-cocycle depends on the choice of section, but the Wells map does not. This follows from the fact that equivalent $2$-cocycles belong to the same cohomology class. Since the Wells map is defined in terms of these cohomology classes rather than specific cocycles, it is independent of the choice of section.

\begin{proposition}
The Wells map $\mathfrak{W}$ is independent of the choice of section.
\end{proposition}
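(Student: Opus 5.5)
The plan is to fix two sections $s,s'$ of $\mathrm{proj}$, write $\tau:=s-s'$ (an $H$-linear map $L\to M$), and let $(\chi,\psi,\Phi)$ and $(\chi',\psi',\Phi')$ be the non-abelian $2$-cocycles attached to $s$ and $s'$. By \eqref{equivalent1}--\eqref{equivalent3} these two cocycles are equivalent via $\tau$. The difference expression in the Wells map is not additive in the cohomology set. So the statement should be read as follows: the class built from $s$ and the class built from $s'$ coincide, where each class is formed by comparing the transported cocycle with the original one. The natural way to make this precise, following Wells and Proposition~\ref{prop6.1}, is to show two things. First, the transported cocycles $(\chi^{(\alpha,\beta)},\psi^{(\alpha,\beta)},\Phi^{(\alpha,\beta)})$ and $(\chi'^{(\alpha,\beta)},\psi'^{(\alpha,\beta)},\Phi'^{(\alpha,\beta)})$ are equivalent. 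Second, this equivalence is compatible with the equivalence $(\chi,\psi,\Phi)\sim(\chi',\psi',\Phi')$, so the pair of classes determining $\mathfrak W(\alpha,\beta)$ does not change.

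The key step is to produce the equivalence between the transported cocycles. The candidate map is $\tau^{(\alpha,\beta)}:=\alpha\circ\tau\circ\beta^{-1}:L\to M$. I would check \eqref{eq:cocycle_equiv_1}--\eqref{eq:cocycle_equiv_3} for it directly. Apply $(\mathrm{id}\otimes_H\alpha)$ to \eqref{equivalent1}--\eqref{equivalent3}, evaluated at $\beta^{-1}(x),\beta^{-1}(y),\alpha^{-1}(u)$.

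\begin{itemize}
\item For $\psi$: since $\alpha$ is a morphism of Lie $H$-pseudoalgebras, $(\mathrm{id}\otimes_H\alpha)[\tau(\beta^{-1}x)*\alpha^{-1}u]_M=[\tau^{(\alpha,\beta)}(x)*u]_M$.
\item For $\chi$: the terms $\psi'_{\beta^{-1}x}(\tau\beta^{-1}y)$ become $\psi'^{(\alpha,\beta)}_x(\tau^{(\alpha,\beta)}y)$ after inserting $\alpha^{-1}\alpha$. Since $\beta$ is a morphism, $(\mathrm{id}\otimes_H\tau\beta^{-1})[x*y]_L$ turns into $(\mathrm{id}\otimes_H\tau^{(\alpha,\beta)})[x*y]_L$ after applying $\alpha$.
\item For $\Phi$: use $S\alpha=\alpha S$ and $T\beta^{-1}=\beta^{-1}T$ to get $\alpha S\tau\beta^{-1}-\alpha\tau T\beta^{-1}=S\tau^{(\alpha,\beta)}-\tau^{(\alpha,\beta)}T$.
\end{itemize}

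Having both equivalences, with $\tau$ on the untransported side and $\tau^{(\alpha,\beta)}$ on the transported side, the two pairs (transported, original) for $s$ and $s'$ are termwise equivalent. Hence they define the same element $\mathfrak W(\alpha,\beta)\in H^2_{\mathrm{nab}}(L_T,M_S)$.

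The main obstacle is the sign and permutation bookkeeping in the $\chi$-identity. There, $(12)\otimes_H\mathrm{id}_M$ must commute with $\mathrm{id}\otimes_H\alpha$, which is fine because they act on different tensor factors. There is also the genuine conceptual point that "difference" of non-abelian cocycles is not a group operation. I would handle the latter by phrasing independence as invariance of the pair of classes, as above, rather than attempting to subtract cocycles. If a finer statement is needed, I would pass through Proposition~\ref{prop6.1}: $(\alpha,\beta)$ is inducible for $s$ if and only if some $\eta$ exists, and replacing $\eta$ by $\eta+\alpha\tau-\tau\beta$ shows the same criterion holds for $s'$. This confirms that the vanishing locus of $\mathfrak W$ is section independent.
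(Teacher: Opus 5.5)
Your argument is correct and is essentially the paper's proof: both show that the transported cocycles for $s$ and $s'$ are equivalent via $\tau^{(\alpha,\beta)}=\alpha\circ\tau\circ\beta^{-1}$ (the paper checks only the $\Phi$-condition explicitly), and your reading of $\mathfrak{W}(\alpha,\beta)$ as the pair of classes is actually cleaner than the paper's closing sentence, which seems to assert that the transported and original cocycles are equivalent to each other. There is one sign slip in your optional remark: with $\tau=s-s'$ you have $s'=s-\tau$, so the map that works for $s'$ is $\gamma s'-s'\beta=\eta-\alpha\tau+\tau\beta$, not $\eta+\alpha\tau-\tau\beta$.
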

\begin{proof}
Let $s'$ be any other section of the projection $\mathrm{proj}$, and let $(\chi', \psi', \Phi')$ be the corresponding non-abelian $2$-cocycle. The cocycles $(\chi, \psi, \Phi)$ and $(\chi', \psi', \Phi')$ are equivalent via the map $\tau := s - s' : L \to M$, satisfying:
\begin{align*}
\chi'(x,y) - \chi(x,y) &= \psi_x(\tau(y)) - ((12)\otimes_H \mathrm{id}_M)\psi_y(\tau(x)) - (\mathrm{id}_{H^{\otimes 2}} \otimes_H \tau)({[x*y]}_L) + {[\tau(x) * \tau(y)]}_M, \\
\psi'_x(u) - \psi_x(u) &= {[\tau(x) * u]}_M, \\
\Phi'(x) - \Phi(x) &= S(\tau(x)) -  \tau(T(x)).
\end{align*}
Now consider the twisted cocycles. Define $\tau^{(\alpha,\beta)} := \alpha \circ \tau \circ \beta^{-1} : L \to M$. We claim that $(\chi^{(\alpha, \beta)}, \psi^{(\alpha, \beta)}, \Phi^{(\alpha, \beta)})$ and $(\chi'^{(\alpha, \beta)}, \psi'^{(\alpha, \beta)}, \Phi'^{(\alpha, \beta)})$ are equivalent via $\tau^{(\alpha,\beta)}$.


For example, starting from the $\Phi$-equivalence:
\begin{align*}\Phi'^{\alpha,\beta}(\beta^{-1}(x)) - \Phi^{\alpha,\beta}(\beta^{-1}(x)) &=\alpha\left(\Phi'(\beta^{-1}(x))\right)- \alpha\left(\Phi(\beta^{-1}(x))\right)\\& 
=\alpha\left(\Phi'(\beta^{-1}(x)) - \Phi(\beta^{-1}(x))\right)\\&=
\alpha
\left(S(\tau(\beta^{-1}(x))) -  \tau(T(\beta^{-1}(x)))\right)\\&= \alpha\left(S(\tau(\beta^{-1}(x)))\right) - \alpha\left( \tau(T(\beta^{-1}(x)))\right)\quad \text{(since $\alpha S = S\alpha$)} \\
&= S(\alpha(\tau(\beta^{-1}(x)))) -  \alpha \circ \tau (T(\beta^{-1}(x))) \\
&= S(\tau^{(\alpha,\beta)}(x)) -   \alpha \circ \tau (\beta^{-1}(T(x))) \quad \text{(since $T\beta = \beta T$)} \\
&= S(\tau^{(\alpha,\beta)}(x)) - \tau^{(\alpha,\beta)} (T(x)).
\end{align*}The other equivalence conditions follow similarly. The cocycles $(\chi^{(\alpha,\beta)}, \psi^{(\alpha,\beta)}, \Phi^{(\alpha,\beta)})$ and $(\chi, \psi, \Phi)$ are equivalent, and similarly for the primed versions. Therefore, they define the same cohomology class in $H^2_{\mathrm{nab}}(L_T, M_S)$.
\end{proof}
\begin{theorem}\label{thm6.4}
Let $L_T$ and $M_S$ be two Rota--Baxter Lie $H$-pseudoalgebras and 
\begin{align*}
0 \longrightarrow M_S \xrightarrow{\mathrm{inc}} R_U \xrightarrow{\mathrm{proj}} L_T \longrightarrow 0
\end{align*} 
be a non-abelian extension of $L_T$ by $M_S$ with a section $s$. The corresponding non-abelian $2$-cocycle of $R_U$ is denoted by $(\chi, \psi, \Phi)$. A pair $(\alpha, \beta) \in \mathrm{Aut}(M_S) \times \mathrm{Aut}(L_T)$ is inducible if and only if the non-abelian $2$-cocycles $(\chi, \psi, \Phi)$ and $(\chi^{(\alpha,\beta)}, \psi^{(\alpha,\beta)}, \Phi^{(\alpha,\beta)})$ are equivalent. In other words, the Wells map of the pair $(\alpha,\beta)$ is zero.
\end{theorem}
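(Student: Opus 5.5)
The plan is to reduce Theorem~\ref{thm6.4} to Proposition~\ref{prop6.1} by a change of variables. The conditions \eqref{eq:16}--\eqref{eq:18} should be exactly the equivalence conditions \eqref{eq:cocycle_equiv_1}--\eqref{eq:cocycle_equiv_3} between $(\chi^{(\alpha,\beta)},\psi^{(\alpha,\beta)},\Phi^{(\alpha,\beta)})$ and $(\chi,\psi,\Phi)$, rewritten after substituting $x\mapsto\beta^{-1}(x)$, $y\mapsto \beta^{-1}(y)$, $u\mapsto \alpha^{-1}(u)$. The equivalence map will be $\tau:=\eta\circ\beta^{-1}$, with the unprimed cocycle in the definition taken to be $(\chi^{(\alpha,\beta)},\psi^{(\alpha,\beta)},\Phi^{(\alpha,\beta)})$ and the primed one to be $(\chi,\psi,\Phi)$. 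Conversely, from a $\tau$ realizing the equivalence we set $\eta:=\tau\circ\beta$.

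\textbf{Step 1: the $\psi$-condition.} Replace $x$ by $\beta^{-1}(x)$ and $u$ by $\alpha^{-1}(u)$ in \eqref{eq:16}. This gives
\[
\psi^{(\alpha,\beta)}_x(u)-\psi_x(u)=[\tau(x)*u]_M ,
\]
which is \eqref{eq:cocycle_equiv_1}. Here $\tau=\eta\beta^{-1}$ is $H$-linear because $\eta$ and $\beta$ are.

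\textbf{Step 2: the $\Phi$-condition.} Replace $x$ by $\beta^{-1}(x)$ in \eqref{eq:18}. Using $T\beta^{-1}=\beta^{-1}T$, which holds since $\beta\in \mathrm{Aut}(L_T)$, we get
\[
\Phi^{(\alpha,\beta)}(x)-\Phi(x)=S(\tau(x))-\tau(T(x)).
\]
This is \eqref{eq:cocycle_equiv_3}.

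\textbf{Step 3: the $\chi$-condition, the main bookkeeping step.} Substitute $\beta^{-1}(x)$ and $\beta^{-1}(y)$ into \eqref{eq:17}. Since $\beta$ is a pseudoalgebra automorphism,
\[
(\mathrm{id}\otimes_H\eta)[\beta^{-1}x*\beta^{-1}y]_L=(\mathrm{id}\otimes_H\tau)[x*y]_L .
\]
The terms $\psi_{x}(\tau(y))-((12)\otimes_H\mathrm{id}_M)\psi_{y}(\tau(x))$ then appear with the \emph{primed} (that is, original) $\psi$, which is exactly the form required in \eqref{eq:cocycle_equiv_2}. The point to check carefully is that the $H^{\otimes2}$-linearity and the $(12)$ permutation commute with $\mathrm{id}\otimes_H\alpha$ and with precomposition by $\beta^{-1}$. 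This holds because these maps act only on the module factor.

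\textbf{Conclusion.} Combining Steps 1--3, $(\alpha,\beta)$ is inducible (Proposition~\ref{prop6.1}) if and only if there is an $\eta$ satisfying \eqref{eq:16}--\eqref{eq:18}, if and only if the two cocycles are equivalent. The last clause, that the Wells map vanishes, is then a restatement via the definition of $\mathfrak{W}$, read as the class comparing the two cocycles. Section independence follows from the preceding proposition.
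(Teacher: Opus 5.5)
Your proposal is correct and is essentially the paper's proof. Both substitute $x\mapsto\beta^{-1}(x)$, $y\mapsto\beta^{-1}(y)$, $u\mapsto\alpha^{-1}(u)$ in \eqref{eq:16}--\eqref{eq:18} from Proposition~\ref{prop6.1}, obtain the equivalence via $\eta\circ\beta^{-1}$, and take $\eta:=\tau\circ\beta$ for the converse, with your explicit use of $T\beta^{-1}=\beta^{-1}T$ and the morphism property of $\beta^{-1}$ supplying the details the paper leaves implicit.
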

\begin{proof}
Let $(\alpha, \beta) \in \mathrm{Aut}(M_S) \times \mathrm{Aut}(L_T)$ be an inducible pair of Rota--Baxter Lie $H$-pseudoalgebra automorphisms. For any fixed section $s$, let the given non-abelian extension produce the non-abelian $2$-cocycle $(\chi, \psi, \Phi)$. Then by Proposition \ref{prop6.1}, there exists an $H$-linear map $\eta : L \to M$ satisfying Eqs. \eqref{eq:16}-\eqref{eq:18}. In these identities, if we replace $x,y,u$ respectively by $\beta^{-1}(x), \beta^{-1}(y), \alpha^{-1}(u)$, we get that
\begin{align*}
\psi^{(\alpha,\beta)}_x(u) - \psi_x(u) &= {[\eta(\beta^{-1}(x)) * u]}_M, \\
\chi^{(\alpha,\beta)}(x, y) - \chi(x, y) &= \psi_x(\eta(\beta^{-1}(y))) - ((12)\otimes_H \mathrm{id}_M)\psi_y(\eta(\beta^{-1}(x))) \\
&\quad - (\mathrm{id}_{H^{\otimes 2}} \otimes_H \eta \circ \beta^{-1})({[x* y]}_L) + {[\eta(\beta^{-1}(x)) * \eta(\beta^{-1}(y))]}_M, \\
\Phi^{(\alpha,\beta)}(x) - \Phi(x) &= S(\eta(\beta^{-1}(x))) -  \eta \circ \beta^{-1} (T(x)).
\end{align*}
This shows that the non-abelian $2$-cocycles $(\chi^{(\alpha,\beta)}, \psi^{(\alpha,\beta)}, \Phi^{(\alpha,\beta)})$ and $(\chi, \psi, \Phi)$ are equivalent by the map $\eta \circ \beta^{-1}$. Hence, they represent the same class in $H^2_{\mathrm{nab}}(L_T, M_S)$, i.e., $\mathfrak{W}(\alpha,\beta) = 0$.

Conversely, assume that $\mathfrak{W}(\alpha, \beta) = 0$. As before, let $s$ be any section and $(\chi, \psi, \Phi)$ be the non-abelian $2$-cocycle produced from the given non-abelian extension. Then it follows that the non-abelian $2$-cocycles $(\chi^{(\alpha,\beta)}, \psi^{(\alpha,\beta)}, \Phi^{(\alpha,\beta)})$ and $(\chi, \psi, \Phi)$ are equivalent
. One can easily verify that the map $\eta:= \tau \circ \beta : L \to M$ satisfies Eqs. \eqref{eq:16}-\eqref{eq:18}. Therefore, by Proposition \ref{prop6.1}, the pair $(\alpha, \beta)$ is inducible.
\end{proof}
We now generalize the exact sequence of Wells maps, for the Rota--Baxter Lie $H$-pseudoalgebras. Given any non-abelian extension of Rota--Baxter Lie $H$-pseudoalgebras $0 \longrightarrow M_S \xrightarrow{\mathrm{inc}} R_U \xrightarrow{\mathrm{proj}} L_T \longrightarrow 0$, define a subgroup $\mathrm{Aut}^{L,M}_M(R_U) \subset \mathrm{Aut}_M(R_U)$ by $\mathrm{Aut}^{L,M}_M(R_U):= \{\gamma \in \mathrm{Aut}_M(R_U) \mid \Pi(\gamma) = (\mathrm{id}_M, \mathrm{id}_L)\}.$

Thus, we have the following theorem.

\begin{theorem}\label{thm:wells-rb}
Let 
\[
0 \longrightarrow M_S \xrightarrow{inc} R_U\xrightarrow{proj} L_T \longrightarrow 0
\]
be a non-abelian extension of Rota--Baxter Lie $H$-pseudoalgebras. Then there is an exact sequence
\[
1 \longrightarrow \mathrm{Aut}^{L,M}_{M}(R_U) \xrightarrow{\iota} \mathrm{Aut}_{M}(R_U) \xrightarrow{\Pi} \mathrm{Aut}(M_S) \times \mathrm{Aut}(L_T) \xrightarrow{\mathfrak{W}} H^2_{nab}(L_T, M_S).
\]
\end{theorem}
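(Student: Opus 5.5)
The exactness statement breaks into three parts: exactness at $\mathrm{Aut}^{L,M}_M(R_U)$, at $\mathrm{Aut}_M(R_U)$, and at $\mathrm{Aut}(M_S)\times\mathrm{Aut}(L_T)$. Since $\mathfrak{W}$ is only a map of pointed sets, exactness at the last term means $\mathrm{im}\,\Pi = \mathfrak{W}^{-1}(0)$, where $0$ denotes the class of the trivial difference, i.e. $\mathfrak{W}(\alpha,\beta)=0$ exactly when $(\chi^{(\alpha,\beta)},\psi^{(\alpha,\beta)},\Phi^{(\alpha,\beta)})$ and $(\chi,\psi,\Phi)$ are equivalent.

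Exactness at the first term is immediate. The map $\iota$ is the inclusion of the subgroup $\mathrm{Aut}^{L,M}_M(R_U)$, so it is injective.

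For exactness at $\mathrm{Aut}_M(R_U)$, we already know that $\Pi$ is a group homomorphism, using $\overline{\gamma_1\gamma_2}=\overline{\gamma_1}\,\overline{\gamma_2}$ and $(\gamma_1\gamma_2)|_M=\gamma_1|_M\gamma_2|_M$. By the very definition of $\mathrm{Aut}^{L,M}_M(R_U)$ as $\Pi^{-1}(\mathrm{id}_M,\mathrm{id}_L)$, we get $\mathrm{im}\,\iota=\ker\Pi$. If desired, one can also describe these elements concretely via Proposition \ref{prop6.1} with $\alpha=\mathrm{id}_M$ and $\beta=\mathrm{id}_L$. Such $\gamma$ are exactly the maps $s(x)+u\mapsto s(x)+u+\eta(x)$ with $\eta:L\to M$ satisfying ${[\eta(x)*u]}_M=0$, the condition $\psi_x(\eta(y))-((12)\otimes_H\mathrm{id}_M)\psi_y(\eta(x))-(\mathrm{id}_{H^{\otimes2}}\otimes_H\eta)({[x*y]}_L)=0$, and $S\eta=\eta T$. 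This description is not needed for exactness.

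Exactness at $\mathrm{Aut}(M_S)\times\mathrm{Aut}(L_T)$ is the substantive step, and it is exactly Theorem \ref{thm6.4}. If $(\alpha,\beta)=\Pi(\gamma)$, the pair is inducible, so $\mathfrak{W}(\alpha,\beta)=0$. Conversely, $\mathfrak{W}(\alpha,\beta)=0$ means the twisted cocycle is equivalent to $(\chi,\psi,\Phi)$, and Theorem \ref{thm6.4} then produces $\gamma\in\mathrm{Aut}_M(R_U)$ with $\Pi(\gamma)=(\alpha,\beta)$. The construction takes $\eta=\tau\circ\beta$ and applies Proposition \ref{prop6.1}.

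The main obstacle is making sense of the "zero" in $H^2_{nab}$, since this is only a pointed set and the difference of two non-abelian cocycles is not itself a cocycle. I will interpret $\mathfrak{W}(\alpha,\beta)=0$ as the equivalence of the two cocycles, as in Theorem \ref{thm6.4}. I will also check carefully, when substituting $\eta=\tau\circ\beta$ back into \eqref{eq:16}–\eqref{eq:18}, how the $\alpha$-twists line up. For \eqref{eq:16} the substitution gives $\psi^{(\alpha,\beta)}_{\beta(x)}(\alpha(u))-\psi_{\beta(x)}(\alpha(u))={[\tau(\beta(x))*\alpha(u)]}_M$, which is \eqref{eq:16} after expanding the definition of $\psi^{(\alpha,\beta)}$. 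For \eqref{eq:18}, the computation uses $T\beta=\beta T$ and $S\alpha=\alpha S$. The conditions for $\chi$ work the same way, by the argument of the preceding proposition on section independence. Everything else is bookkeeping.
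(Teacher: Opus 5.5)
Your proposal is correct and follows the same route the paper takes implicitly, since the paper states Theorem \ref{thm:wells-rb} without a separate proof. Exactness at the first two terms is immediate because $\mathrm{Aut}^{L,M}_M(R_U)$ is defined as $\ker\Pi$, and exactness at $\mathrm{Aut}(M_S)\times\mathrm{Aut}(L_T)$ is exactly Theorem \ref{thm6.4}, with $\mathfrak{W}(\alpha,\beta)=0$ read as equivalence of $(\chi^{(\alpha,\beta)},\psi^{(\alpha,\beta)},\Phi^{(\alpha,\beta)})$ and $(\chi,\psi,\Phi)$. Your substitution check with $\eta=\tau\circ\beta$ is right, but for the $\chi$-condition you need only ${[\beta(x)*\beta(y)]}_L=(\mathrm{id}_{H^{\otimes 2}}\otimes_H\beta){[x*y]}_L$, not the section-independence argument.
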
 
 \section{ Homotopy Rota--Baxter Lie $H$-pseudoalgebras} \label{sec:homotopy}

The homotopy theory of pseudoalgebras is a relatively recent development; see, for instance, \cite{Das-2025} for the case of higher Lie Lie $H$-pseudoalgebras. In this section, we introduce the concept of a $2$-term  $L_\infty$-$H$-pseudoalgebra equipped with a homotopy Rota--Baxter operator and provide some important results.
 
\begin{definition}
An \textbf{ $L_\infty$-$H$-pseudoalgebra} (or strongly homotopy Lie $H$-pseudoalgebra) is a pair $(L, \{\beta_k\}_{k\geq1})$ consisting of a graded left $H$-module $L = \bigoplus_{n \in \mathbb{Z}} L_n$, equipped with a collection of polylinear maps $\{\beta_k \in \mathrm{Hom}_{H^{\otimes k}}(L^{\boxtimes k}, H^{\otimes k} \otimes_H L)\}_{k\geq 1}$ of degree $k-2$, satisfying the following conditions:

\begin{enumerate}
    \item \textbf{Graded Skew-symmetry}: Each map $\beta_k$ is graded skew-symmetric. For any homogeneous elements $x_1, \dots, x_k \in L$ and any permutation $\sigma \in S_k$, we have
    \[
        \beta_k(x_1, \dots, x_k) = (-1)^{\sigma}\epsilon(\sigma)  (\sigma \otimes_H \mathrm{id}_L) \beta_k(x_{\sigma^{-1}(1)}, \dots, x_{\sigma^{-1}(k)}),
    \]
    where $\epsilon(\sigma)$ is the Koszul sign associated to the permutation of the homogeneous elements $x_i$.
    
    \item \textbf{Higher Jacobi Identities}: For every $N \in \mathbb{N}$ and for all homogeneous elements $x_1, \dots, x_N \in L$, the following identity holds:
    \[
        \sum_{\substack{i+j=N+1 \\ i,j \geq 1}} \sum_{\sigma \in \mathrm{Sh}(i-1,j)} (-1)^{(j-1)(i-1)} \epsilon(\sigma) (\sigma\otimes_Hid)\beta_j(\beta_i(x_{\sigma(1)}, \dots, x_{\sigma(i)}), x_{\sigma(i+1)}, \dots, x_{\sigma(N)}) = 0.
    \]
    Here, $\mathrm{Sh}(i-1,j)$ denotes the set of $(i-1,j)$-shuffles, and the action of $\beta_j$ on the result of $\beta_i$ is defined using the pseudotensor category structure of $M^*(H)$.
\end{enumerate}
\end{definition}
  
\begin{definition}
A \textbf{$2$-term  $L_\infty$-$H$-pseudoalgebra} is a triple consisting of:
\begin{enumerate}
    \item[(i)] A chain complex of left $H$-modules $\beta_1: L_1 \to L_0$,
    \item[(ii)] A graded skew-symmetric, $H^{\otimes 2}$-linear map $\beta_2 \in \mathrm{Hom}_{H^{\otimes 2}}(L_i \boxtimes L_j, H^{\otimes 2} \otimes_H L_{i+j})$, for $i,j,i+j\in [0, 1]$,
    \item[(iii)] A graded skew-symmetric, $H^{\otimes 3}$-linear map $\beta_3 \in \mathrm{Hom}_{H^{\otimes 3}}(L_0 \boxtimes L_0 \boxtimes L_0, H^{\otimes 3} \otimes_H L_{1})$,
\end{enumerate}
that satisfy the following set of identities:
\begin{enumerate}
    \item[(L1)] $\beta_2(u , v) = 0$ for all $u, v \in L_1$,
    \item[(L2)] $\beta_2(x , u) = -((12) \otimes_H \mathrm{id}_{L_1})\beta_2(u , x)$ for all $x \in L_0$, $u \in L_1$,
    \item[(L3)] $\beta_2(x , y) = -((12)\otimes_H \mathrm{id}_{L_0})\beta_2(y , x)$ for all $x, y \in L_0$,
    \item[(L4)] $\beta_1(\beta_2(x , u)) = \beta_2(x , \beta_1(u))$ for all $x \in L_0$, $u \in L_1$,
    \item[(L5)] $\beta_2(\beta_1(u) , v) = \beta_2(u ,\beta_1(v))$ for all $u, v \in L_1$,
    \item[(L6)] $-\beta_1(\beta_3(x, y, z)) = \beta_2(\beta_2(x , y), z) -\beta_2(x , \beta_2(y , z)) + ((12)\otimes_Hid)  \beta_2(y , \beta_2(x ,z))$ for all $x, y, z \in L_0$,
    \item[(L7)] $-\beta_3(x, y, \beta_1(u)) = \beta_2(\beta_2(x , y) , u) -\beta_2(x , \beta_2(y , u)) +((12)\otimes_Hid)\beta_2(y , \beta_2(x , u))$ for all $x, y \in L_0$, $u \in L_1$,
    \item[(L8)] The Jacobiator identity holds: 
    \begin{align*}
        &\beta_2(x , \beta_3(y, z, w)) - ((12)\otimes_Hid)\beta_2(y , \beta_3(x, z, w)) + ((123)\otimes_Hid)\beta_2(z , \beta_3(x, y, w)) \\& -((1234)\otimes_Hid)\beta_2(w , \beta_3(x, y, z))
        - \beta_3(\beta_2(x , y) , z , w) +((23)\otimes_Hid)\beta_3(\beta_2(x , z) , y , w) \\&
       - ((234)\otimes_Hid)\beta_3(\beta_2(x , w) , y , z) - ((132)\otimes_Hid)\beta_3(\beta_2(y , z) , x , w)+((1342)\otimes_Hid)\beta_3(\beta_2(y , w) , x , z) \\&-((13)(24)\otimes_Hid)\beta_3(\beta_2(z , w) , x , y) =0,
    \end{align*}
    for all $x,y,z,w\in L_0$ and $u,v\in L_1$. Here, the action of $\beta_2$ on the result of $\beta_3$ is defined using the pseudotensor category structure.
\end{enumerate}
\end{definition}
\begin{definition}
Let $\mathcal{L} = (L_1 \xrightarrow{\beta_1} L_0, \beta_2, \beta_3)$ and $\mathcal{L}' = (L'_1 \xrightarrow{\beta'_1} L'_0, \beta'_2, \beta'_3)$ be two $2$-term $L_\infty$-$H$-pseudoalgebras. A \emph{morphism} between them is given by a triple $(f_0, f_1, f_2)$, where: $f_0: L_0 \to L'_0$ and $f_1: L_1 \to L'_1$ are $H$-linear maps, and  $f_2 \in \mathrm{Hom}_{H^{\otimes 2}}(L_0 \boxtimes L_0, H^{\otimes 2} \otimes_H L'_1)$ is an $H^{\otimes 2}$-linear  graded skew-symmetric map,
such that the following equations hold for all homogeneous elements $x, y \in L_0$ and $u \in L_1$:
\begin{enumerate}
    \item[(H1)] $f_{0} \circ \beta_1 = \beta'_1 \circ f_1,$ 
    \item[(H2)] $ (id_{H^{\otimes 2}}\otimes_H \beta'_1)(f_2(x, y)) = -\beta'_2(f_0(x) , f_0(y)) + (id_{H^{\otimes 2}} \otimes_H f_0)(\beta_2(x ,y))$,
    \item[(H3)] $f_2(x, \beta_1(u)) =  (id_{H^{\otimes 2}} \otimes_H f_1)(\beta_2(x ,u)) - \beta'_2(f_0(x) ,f_1(u)),$ 
    \item[(H4)] $f_2(\beta_1(u), x) = - (id_{H^{\otimes 2}} \otimes_H f_1)(\beta_2(u ,x)) + \beta'_2(f_1(u) ,f_0(x)),$
    \item[(H5)] \begin{align*}
        &-(id_{H^{\otimes 3}} \otimes_H f_1)(\beta_3(x, y, z)) + \beta'_3(f_0(x), f_0(y), f_0(z)) = \\
        &\qquad -\beta'_2(f_0(x) ,f_2(y, z)) - f_2(x, \beta_2(y ,z)) +((12)\otimes_Hid)\beta'_2(f_0(y) ,f_2(x, z)) \\
        &\qquad + ((12)\otimes_Hid)f_2(y, \beta_2(x ,z)) + \beta'_2(f_2(x, y) ,f_0(z)) + f_2(\beta_2(x ,y) ,z).
    \end{align*}
\end{enumerate}
We often denote such a morphism by $(f_0, f_1, f_2): \mathcal{L} \to \mathcal{L}'$. The composition of two morphisms $(f_0, f_1, f_2): \mathcal{L} \to \mathcal{L}'$ and $(g_0, g_1, g_2): \mathcal{L}' \to \mathcal{L}''$ is given by $(g_0 \circ f_0, g_1 \circ f_1, (g_2 \circ (f_0 \otimes f_0)) + (id_{H^{\otimes 2}} \otimes_H g_1) \circ f_2)$.
\end{definition}Further suppose that $\mathcal{L} = (L_1 \xrightarrow{\beta_1} L_0, \beta_2, \beta_3)$ is a $2$-term  $L_\infty$-$H$-pseudoalgebra. Then the identity morphism $\mathrm{Id}_\mathcal{L}: \mathcal{L} \to \mathcal{L}$ is given by the triple 
\[
\mathrm{Id}_\mathcal{L} = (\mathrm{id}_{L_0}, \mathrm{id}_{L_1}, 0).
\]
This corresponds to the following identities for all $x, y, z \in L_0$ and $u \in L_1$:
\begin{itemize}
    \item[a)] $\mathrm{id}_{L_0} \circ \beta_1 = \beta_1 \circ \mathrm{id}_{L_1},$
    \item[b)] $(\mathrm{id}_{H^{\otimes 2}} \otimes_H \mathrm{id}_{L_0})(\beta_2(x, y)) - \beta_2(\mathrm{id}_{L_0}(x), \mathrm{id}_{L_0}(y)) = 0,$
    \item[c)] $(\mathrm{id}_{H^{\otimes 2}} \otimes_H \mathrm{id}_{L_1})(\beta_2(x, u)) - \beta_2(\mathrm{id}_{L_0}(x), \mathrm{id}_{L_1}(u)) = 0,$
    \item[d)] $(\mathrm{id}_{H^{\otimes 2}} \otimes_H \mathrm{id}_{L_1})(\beta_2(u, x)) - \beta_2(\mathrm{id}_{L_1}(u), \mathrm{id}_{L_0}(x)) = 0,$
    \item[e)] $(\mathrm{id}_{H^{\otimes 3}} \otimes_H \mathrm{id}_{L_1})(\beta_3(x, y, z)) - \beta_3(\mathrm{id}_{L_0}(x), \mathrm{id}_{L_0}(y), \mathrm{id}_{L_0}(z)) = 0.$
\end{itemize}
These identities are trivially satisfied because all the maps involved are identity maps.

\begin{definition}
Let $(L_1 \xrightarrow{\beta_1} L_0, \beta_2, \beta_3)$ be a $2$-term  $L_\infty$-$H$-pseudoalgebra. A \emph{homotopy Rota--Baxter operator of weight $\lambda \in \mathbb{k}$} on this structure is a triple $\mathcal{T} = (\mathcal{T}_0, \mathcal{T}_1, \mathcal{T}_2)$ consisting of two $H$-linear maps $\mathcal{T}_0 : L_0 \to L_0$, $\mathcal{T}_1 : L_1 \to L_1$, and a graded skew-symmetric $H^{\otimes 2}$-linear map 
\[
\mathcal{T}_2 \in \mathrm{Hom}_{H^{\otimes 2}}(L_0 \boxtimes L_0,\, H^{\otimes 2} \otimes_H L_1),
\]
subject to the following identities for all $x, y, z \in L_0$ and $u \in L_1$:
\begin{align}
    &\beta_1 \circ \mathcal{T}_1 = \mathcal{T}_0 \circ \beta_1, \label{eq:rb_identity1} \\
&\beta_1(\mathcal{T}_2(x, y)) = (\mathrm{id}_{H^{\otimes 2}} \otimes_H \mathcal{T}_0)\Big( \beta_2(\mathcal{T}_0(x), y) + \beta_2(x, \mathcal{T}_0(y)) + \lambda \beta_2(x, y) \Big) - \beta_2(\mathcal{T}_0(x), \mathcal{T}_0(y)), \label{eq:rb_identity2} \\
&\mathcal{T}_2(x, \beta_1(u)) = (\mathrm{id}_{H^{\otimes 2}} \otimes_H \mathcal{T}_1)\Big( \beta_2(\mathcal{T}_0(x), u) + \beta_2(x, \mathcal{T}_1(u)) + \lambda \beta_2(x, u) \Big) - \beta_2(\mathcal{T}_0(x), \mathcal{T}_1(u)), \label{eq:rb_identity3} \\
&\beta_2\Big((\mathcal{T}_0(x), \mathcal{T}_2(y, z)) -  (\mathcal{T}_0(y), \mathcal{T}_2(x, z)) -  (\mathcal{T}_2(x, y), \mathcal{T}_0(z))\Big) \nonumber \\
&\quad + \mathcal{T}_2\big( \beta_2(\mathcal{T}_0(x), y) + \beta_2(x, \mathcal{T}_0(y)) + \lambda \beta_2(x, y),\, z \big) \nonumber \\
&\quad + \mathcal{T}_2\big( x,\, \beta_2(\mathcal{T}_0(y), z) + \beta_2(y, \mathcal{T}_0(z)) + \lambda \beta_2(y, z) \big) \nonumber \\
&\quad - \mathcal{T}_2\big( y,\, \beta_2(\mathcal{T}_0(x), z) + \beta_2(x, \mathcal{T}_0(z)) + \lambda \beta_2(x, z) \big) \nonumber \\
&\quad + (\mathrm{id}_{H^{\otimes 3}} \otimes_H \mathcal{T}_1)\Big( 
\beta_2(x, \mathcal{T}_2(y, z)) - ((12)\otimes_Hid)\beta_2(y, \mathcal{T}_2(x, z)) - \beta_2(\mathcal{T}_2(x, y), z) \nonumber \\
&\qquad - \mathcal{T}_2(\beta_2(x, y), z) + \mathcal{T}_2(x, \beta_2(y, z)) -((12)\otimes_Hid) \mathcal{T}_2(y, \beta_2(x, z)) 
\Big) \nonumber \\
&= \beta_3(\mathcal{T}_0(x), \mathcal{T}_0(y), \mathcal{T}_0(z)) - (\mathrm{id}_{H^{\otimes 3}} \otimes_H \mathcal{T}_1)\Big( 
\beta_3(\mathcal{T}_0(x), \mathcal{T}_0(y), z) + \beta_3(\mathcal{T}_0(x), y, \mathcal{T}_0(z)) + \beta_3(x, \mathcal{T}_0(y), \mathcal{T}_0(z)) \nonumber \\
&\qquad + \lambda \big( \beta_3(\mathcal{T}_0(x), y, z) + \beta_3(x, \mathcal{T}_0(y), z) + \beta_3(x, y, \mathcal{T}_0(z)) \big) 
+ \lambda^2 \beta_3(x, y, z)
\Big). \label{eq:rb_identity4}
\end{align}
Here, all compositions are defined using the pseudotensor category structure of $M^*(H)$.
\end{definition}
 
A $2$-term  $L_\infty$-$H$-pseudoalgebra equipped with a homotopy Rota--Baxter operator $\mathcal{T} =(\mathcal{T}_0, \mathcal{T}_1, \mathcal{T}_2)$ is referred to as a \emph{$2$-term Rota--Baxter  $L_\infty$-$H$-pseudoalgebra}. We denote such a structure by $\mathcal{L}_\mathcal{T} = (\beta_1: L_1\to L_0, \beta_2, \beta_3, \mathcal{T}_0, \mathcal{T}_1, \mathcal{T}_2)$ or simply by $\mathcal{L}_\mathcal{T}$.

\subsection{Skeletal and Strict $2$-Term Rota--Baxter $L_\infty$-$H$-pseudoalgebras}

\begin{definition}
Let $\mathcal{L}_\mathcal{T}$ be a $2$-term Rota--Baxter  $L_\infty$-$H$-pseudoalgebra. It is said to be \emph{skeletal} if $\beta_1 =0$ and is said to be \emph{strict} if $\beta_3 =0$ and $\mathcal{T}_2 =0$.
\end{definition}

Note that, if $\mathcal{L}_\mathcal{T}$ is \emph{skeletal}, then we have
\begin{enumerate}
    \item[(sk1)] $\beta_2(u ,v) = 0$,
    \item[(sk2)] $\beta_2(x ,u) = -((12)\otimes_H \mathrm{id}_{L_1})\beta_2(u ,x)$,
    \item[(sk3)] $\beta_2(x ,y) = -((12) \otimes_H \mathrm{id}_{L_0})\beta_2(y ,x)$,
    \item [(sk4)] $0= -\beta_2(x ,\beta_2(y ,z)) + \beta_2(\beta_2(x ,y) ,z) + ((12)\otimes_Hid)\beta_2(y ,\beta_2(x ,z))$,
    \item [(sk5)] $0= -\beta_2(x ,\beta_2(y ,u)) + \beta_2(\beta_2(x ,y) ,u) +((12)\otimes_Hid)\beta_2(y ,\beta_2(x ,u))$,
    \item[(sk6)] The higher Jacobi identity (L8) holds for $\beta_3$,
    \item [(sk7)] 
$ 0=(\mathrm{id}_{H^{\otimes 2}} \otimes_H \mathcal{T}_1)\Big( \beta_2(\mathcal{T}_0(x) ,u) + \beta_2(x ,\mathcal{T}_1(u))+ \lambda \beta_2( x  , u) \Big) -  \beta_2(\mathcal{T}_0(x) ,\mathcal{T}_1(u)), $
    \item[(sk8)] 
 $0 = (\mathrm{id}_{H^{\otimes 2}} \otimes_H \mathcal{T}_0)\Big( \beta_2(\mathcal{T}_0(x) ,y) + \beta_2(x ,\mathcal{T}_0(y)) + \lambda\beta_2 (x  ,y) \Big)-
    \beta_2(\mathcal{T}_0(x) ,\mathcal{T}_0(y)), $
  \item[(sk9)] Equation~\eqref{eq:rb_identity4} holds with $\beta_1 = 0$,
\end{enumerate}
for all homogeneous elements $x,y,z,w\in L_0$ and $u,v\in L_1$. 

\par Note that $\mathcal{L}_\mathcal{T}$ is \emph{strict} if $\beta_3 =0$ and $\mathcal{T}_2=0$, then we have  
\begin{enumerate}
    \item[(st1)] $\beta_2(u ,v) = 0$,
    \item[(st2)] $\beta_2(x ,u) = -((12)\otimes_H \mathrm{id}_{L_1})\beta_2(u ,x)$,
    \item[(st3)] $\beta_2(x ,y) = -((12) \otimes_H \mathrm{id}_{L_0})\beta_2(y ,x)$,
    \item [(st4)]$\beta_1( \beta_2(x ,u)) = \beta_2(x ,\beta_1(u))$,
    \item [(st5)]$ \beta_2(\beta_1(u) ,v) = \beta_2(u ,\beta_1(v))$,
    \item [(st6)] $0=- \beta_2(x ,\beta_2(y ,z)) + \beta_2(\beta_2(x ,y) ,z) + ((12)\otimes_Hid)\beta_2(y ,\beta_2(x ,z))$,
    \item [(st7)]$0= -\beta_2(x ,\beta_2(y ,u)) + \beta_2(\beta_2(x ,y) ,u) +((12)\otimes_Hid)\beta_2(y ,\beta_2(x ,u))$,
    \item [(st8)]$\beta_1 \circ \mathcal{T}_1 = \mathcal{T}_0 \circ \beta_1,$
    \item [(st9)] $0=(\mathrm{id}_{H^{\otimes 2}} \otimes_H \mathcal{T}_1)\Big( \beta_2(\mathcal{T}_0(x) ,u) + \beta_2(x ,\mathcal{T}_1(u)) + \lambda\beta_2( x  , u) \Big) -  \beta_2(\mathcal{T}_0(x) ,\mathcal{T}_1(u)),$
    \item [(st10)]$0= (\mathrm{id}_{H^{\otimes 2}} \otimes_H \mathcal{T}_0)\Big( \beta_2(\mathcal{T}_0(x) ,y) + \beta_2(x ,\mathcal{T}_0(y)) + \lambda\beta_2 (x  ,y) \Big)-
    \beta_2(\mathcal{T}_0(x) ,\mathcal{T}_0(y)), $
\end{enumerate}
for all homogeneous elements $x,y,z,w\in L_0$ and $u,v\in L_1$. 

Next, we introduce the crossed module of Rota--Baxter Lie $H$-pseudoalgebras and characterize strict $2$-term $L_\infty$-$H$-pseudoalgebras.
\begin{definition}
A crossed module of Rota--Baxter Lie $H$-pseudoalgebras is a quadruple $( {L_0}_{\mathcal{T}_0},{L_1}_{\mathcal{T}_1}, t, \rho)$, where ${L_0}_{\mathcal{T}_0} = (L_0, {[\cdot*\cdot]}_{L_0}, \mathcal{T}_0)$ and ${L_1}_{\mathcal{T}_1} = (L_1, {[\cdot*\cdot]}_{L_1}, \mathcal{T}_1)$ are both Rota--Baxter Lie $H$-pseudoalgebras, $t: {L_1}\to {L_0}$ is a morphism of Rota--Baxter Lie $H$-pseudoalgebras, and $\rho:{L_0} \boxtimes {L_1} \to H^{\otimes 2} \otimes_H {L_1}$ is an $H^{\otimes 2}$-linear map such that $(L_1, \rho)$ is a representation of $(L_0, {[\cdot*\cdot]}_{L_0})$, satisfying:\begin{align*} 
    (\mathrm{id}_{H^{\otimes 2}} \otimes_H  t)(\rho(x ,u)) &= { [x* t(u)]}_{L_0} , \\ 
    \rho(t(u) ,v) &=  {[u * v ]}_{L_1},
\end{align*}
for all $x\in L_0$ and $u, v \in L_1$.
\end{definition}
Let $((L_0, {[\cdot * \cdot]}_{L_0}, \mathcal{T}_0), (L_1, {[\cdot * \cdot]}_{L_1}, \mathcal{T}_1), t, \rho)$ be a crossed module of Rota--Baxter Lie $H$-pseudoalgebras. Define the deformed brackets:
\[
{[x* y]}_{\mathcal{T}_0} := {[\mathcal{T}_0(x)* y]}_{L_0} + {[x* \mathcal{T}_0(y)]}_{L_0} + \lambda {[x* y]}_{L_0}, \quad
{[u* v]}_{\mathcal{T}_1} := {[\mathcal{T}_1(u)* v]}_{L_1} + {[u* \mathcal{T}_1(v)]}_{L_1} + \lambda {[u*v]}_{L_1}.
\] Then for any $ u, v \in L_1 $, we observe that
\begin{align*}
(\mathrm{id}_{H^{\otimes 2}} \otimes_H  t)\big([u * v]^{\mathcal{T}_1}_{L_1}\big) &=  (\mathrm{id}_{H^{\otimes 2}} \otimes_H  t)\Big( {[\mathcal{T}_1(u) * v]}_{L_1} + {[u * \mathcal{T}_1(v)]}_{L_1} + \lambda {[u*v]}_{L_1}\Big) \\
&=  \Big( {[t(\mathcal{T}_1(u)) * t(v)]}_{L_0} + {[t(u) * t(\mathcal{T}_1(v))]}_{L_0} + \lambda{[t(u) * t(v)]}_{L_0} \Big) \\
&=  \Big( {[\mathcal{T}_0(t(u)) * t(v)]}_{L_0} + {[t(u) * \mathcal{T}_0(t(v))]}_{L_0} + \lambda{[t(u) * t(v)]}_{L_0} \Big) \\
&= [t(u) * t(v)]^{\mathcal{T}_0}_{L_0} \quad (\because t\circ \mathcal{T}_1 = \mathcal{T}_0\circ t),
\end{align*}
which shows that $ t : L_1^{\mathcal{T}_1} \to L_0^{\mathcal{T}_0} $ is a homomorphism of deformed Lie $H$-pseudoalgebras. 
Next, we define the map $ \rho_1 : L_0^{\mathcal{T}_0} \boxtimes L_1^{\mathcal{T}_1}\to H^{\otimes 2} \otimes_H L_1^{\mathcal{T}_1} $  by 
$$\rho_1 (x ,u) :=  \big(\rho (\mathcal{T}_0(x), u) + \rho (x , \mathcal{T}_1(u)) +\lambda  \rho(x , u)\big), \quad \text{for } x \in L_0 , u \in L_1  .$$
One can easily verify that $ \rho_1 $ defines a representation of $(L_0, {[\cdot,\cdot]}_{\mathcal{T}_0})$ on $L_1$. Moreover, for any $ x \in L_0 $ and $ u, v \in L_1 $, we have
\begin{align*}
(\mathrm{id}_{H^{\otimes 2}} \otimes_H t)(\rho_1 (x , u)) &= (\mathrm{id}_{H^{\otimes 2}} \otimes_H t)\Big( \rho (\mathcal{T}_0(x) ,u) + \rho(x , \mathcal{T}_1(u)) +\lambda \rho( x  ,  u)  \Big) \\
&=\Big( {[\mathcal{T}_0(x) * t(u)]}_{L_0} + {[x * t(\mathcal{T}_1(u))]}_{L_0} +\lambda {[x * t(u)]}_{L_0} \Big) \\&=\Big( {[\mathcal{T}_0(x) * t(u)]}_{L_0} + {[x * \mathcal{T}_0(t(u))]}_{L_0} +\lambda {[x * t(u)]}_{L_0} \Big) \\
&= {[x * t(u)]}^{\mathcal{T}_0}_{L_0},
\end{align*}
and
\begin{align*}
\rho_1 (t(u) , v) &=  \Big( \rho (\mathcal{T}_0(t(u)) , v) + \rho(t(u) , \mathcal{T}_1(v))+ \lambda \rho( t(u) , v) \Big) \\
&=  \Big( {[\mathcal{T}_1(u) * v]}_{L_1} + {[u * \mathcal{T}_1(v)]}_{L_1} + \lambda {[u * v]}_{L_1} \Big) \\
&= [u * v]^{\mathcal{T}_1}_{L_1}.
\end{align*}
This shows that the quadruple $(L_0, L_1, t, \rho_1)$ satisfies the crossed module for deformed Lie $H$-pseudoalgebras.

\begin{proposition}\label{prop:directsum_rb}
Let $({L_1}_{\mathcal{T}_1}, {L_0}_{\mathcal{T}_0}, t, \rho)$ be a crossed module of Rota--Baxter Lie $H$-pseudoalgebras. Then $(L_0 \oplus L_1, \mathcal{T}_0 \oplus \mathcal{T}_1)$ is a Rota--Baxter Lie $H$-pseudoalgebra, where $L_0 \oplus L_1$ is equipped with the pseudobracket
\begin{align}\label{eq:crossedmod_rb}
\Big[(x, u) * (y, v)\Big] := \Big({[x * y]}_{L_0}, \rho(x , v) - ((12)\otimes_H \mathrm{id}_{L_1})\rho(y , u) + {[u * v]}_{L_1}\Big),
\end{align}
for $(x,u), (y,v) \in L_0 \oplus L_1$.
\end{proposition}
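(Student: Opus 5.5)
The proof has two parts. First, $L_0\oplus L_1$ with the bracket \eqref{eq:crossedmod_rb} is a Lie $H$-pseudoalgebra. Second, $\mathcal{T}_0\oplus\mathcal{T}_1$ is a Rota--Baxter operator of weight $\lambda$ on it.

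For the first part, $H^{\otimes 2}$-linearity is immediate, since $\rho$, ${[\cdot*\cdot]}_{L_0}$ and ${[\cdot*\cdot]}_{L_1}$ are all $H^{\otimes 2}$-linear. Skew-symmetry comes from the skew-symmetry of ${[\cdot*\cdot]}_{L_0}$ and ${[\cdot*\cdot]}_{L_1}$ together with the visibly antisymmetric form $\rho(x,v)-((12)\otimes_H\mathrm{id})\rho(y,u)$.

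For the Jacobi identity I would expand both sides on three elements $(x,u),(y,v),(z,w)$ and sort the $L_1$-component by type, using trilinearity:
\begin{enumerate}
\item The purely $L_0$ terms are Jacobi in $L_0$.
\item The terms of type $(L_0,L_0,L_1)$ are exactly the representation axiom for $\rho$.
\item The terms of type $(L_0,L_1,L_1)$ require $\rho$ to act by derivations of ${[\cdot*\cdot]}_{L_1}$. I would obtain this from the second crossed-module identity $\rho(t(u),v)={[u*v]}_{L_1}$ combined with the first identity and Jacobi in $L_1$, exactly as for classical crossed modules. If needed, I would take the derivation property as part of the standard convention; it is the same condition \eqref{eq:derivation-condition} used in Section \ref{sec:cohomology}.
\item The terms of type $(L_1,L_1,L_1)$ are Jacobi in $L_1$.
\end{enumerate}
Alternatively, this part can be shortcut: \eqref{eq:crossedmod_rb} is the semidirect-product bracket of a matched pair, so the structure-level statement in the proof of Theorem \ref{thm:MC-Rota--Baxter} applies, namely that $\pi_{L_0}+\rho+\pi_{L_1}$ is Maurer--Cartan.

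For the second part, I would compute ${[(\mathcal{T}_0x,\mathcal{T}_1u)*(\mathcal{T}_0y,\mathcal{T}_1v)]}$ componentwise, following the proof of Proposition \ref{propsemidirect}. The $L_0$-component is the Rota--Baxter identity for $\mathcal{T}_0$. The $L_1$-component splits into three pieces:
\begin{enumerate}
\item ${[\mathcal{T}_1u*\mathcal{T}_1v]}_{L_1}$, which is handled by the Rota--Baxter identity for $\mathcal{T}_1$.
\item $\rho(\mathcal{T}_0x,\mathcal{T}_1v)$ and its mirror term, which require
\[
\rho(\mathcal{T}_0x,\mathcal{T}_1v)=(\mathrm{id}_{H^{\otimes2}}\otimes_H\mathcal{T}_1)\big(\rho(\mathcal{T}_0x,v)+\rho(x,\mathcal{T}_1v)+\lambda\rho(x,v)\big).
\]
This says $(L_1,\mathcal{T}_1,\rho)$ is a representation of ${L_0}_{\mathcal{T}_0}$ in the sense of Definition \ref{rbrep}.
\item The cross terms, which then regroup exactly into $(\mathrm{id}\otimes_H(\mathcal{T}_0\oplus\mathcal{T}_1))$ applied to ${[(\mathcal{T}_0x,\mathcal{T}_1u)*(y,v)]}+{[(x,u)*(\mathcal{T}_0y,\mathcal{T}_1v)]}+\lambda{[(x,u)*(y,v)]}$.
\end{enumerate}

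The main obstacle is the identity in step 2 of the second part, since the definition of crossed module only states that $(L_1,\rho)$ is a representation of $L_0$ and says nothing about $\mathcal{T}_1$. My first attempt would be to derive it from the crossed-module axioms: apply the injective-looking map $t$, use $t\mathcal{T}_1=\mathcal{T}_0t$, $(\mathrm{id}\otimes_H t)\rho(x,u)={[x*t(u)]}$ and the Rota--Baxter identity for $\mathcal{T}_0$. This shows both sides agree after applying $\mathrm{id}\otimes_H t$. In general, however, $t$ is not injective. So I would instead state the compatibility as implicit in calling $({L_0}_{\mathcal{T}_0},{L_1}_{\mathcal{T}_1},t,\rho)$ a crossed module of Rota--Baxter Lie $H$-pseudoalgebras, with $\rho$ a representation of Rota--Baxter type; this is precisely the form in which (st9) appears in the strict $2$-term structures. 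With that identity in hand, the remaining verification is the same bookkeeping as in Proposition \ref{propsemidirect}.
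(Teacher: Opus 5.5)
Your proposal matches the paper's proof. The paper takes the Lie $H$-pseudoalgebra structure on $L_0\oplus L_1$ from an external result (Das, Proposition 5.7), whereas you verify it directly; there the derivation property does follow, as in the classical case, from the representation axiom for $\rho$ together with the two crossed-module identities. The paper then performs the same componentwise Rota--Baxter check, and in doing so it replaces $\rho(\mathcal{T}_0(x),\mathcal{T}_1(v))$ by $(\mathrm{id}_{H^{\otimes 2}}\otimes_H\mathcal{T}_1)\big(\rho(\mathcal{T}_0(x),v)+\rho(x,\mathcal{T}_1(v))+\lambda\rho(x,v)\big)$ without comment. Your caution about that step is justified: the identity does not follow from the stated axioms. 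For example, take $H=\mathbb{k}$, $t=0$, $L_1$ abelian and a nontrivial $L_0$-module, $\mathcal{T}_0=0$, $\mathcal{T}_1=\mathrm{id}$ and $\lambda=0$; all the axioms hold, but $\mathcal{T}_0\oplus\mathcal{T}_1$ is not Rota--Baxter. So the identity must be built into the definition, i.e.\ $(L_1,\mathcal{T}_1,\rho)$ must be a representation of ${L_0}_{\mathcal{T}_0}$ in the sense of Definition~\ref{rbrep}, as you propose.
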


\begin{proof}
Since $L_0, L_1$ are both Lie $H$-pseudoalgebras and $\rho : L_0 \boxtimes L_1 \to H^{\otimes 2} \otimes_H L_1$ is an $H^{\otimes 2}$-linear map, it follows from Proposition 5.7 in \cite{Das-2025} that $L_0 \oplus L_1$ is a Lie $H$-pseudoalgebra with the pseudobracket \eqref{eq:crossedmod_rb}. Moreover, for any $(x, u), (y, v) \in L_0 \oplus L_1$, we check the Rota--Baxter identity
\begin{align*}
&\Big[(\mathcal{T}_0 \oplus \mathcal{T}_1)(x, u) * (\mathcal{T}_0 \oplus \mathcal{T}_1)(y, v)\Big] 
\\&= \Big[(\mathcal{T}_0(x), \mathcal{T}_1(u)) * (\mathcal{T}_0(y), \mathcal{T}_1(v))\Big] \\
&= \Big({[\mathcal{T}_0(x) * \mathcal{T}_0(y)]}_{L_0}, \rho(\mathcal{T}_0(x) , \mathcal{T}_1(v)) - ((12) \otimes_H \mathrm{id}_{L_1})\rho(\mathcal{T}_0(y) , \mathcal{T}_1(u)) + {[\mathcal{T}_1(u) * \mathcal{T}_1(v)]}_{L_1}\Big) \\
&= \Big((\mathrm{id}_{H^{\otimes 2}} \otimes_H \mathcal{T}_0)\Big({[\mathcal{T}_0(x) * y]}_{L_0} + {[x * \mathcal{T}_0(y)]}_{L_0} + \lambda {[x *  y]}_{L_0} \Big), \\
&\qquad (\mathrm{id}_{H^{\otimes 2}} \otimes_H\mathcal{T}_1) \Big(\rho(\mathcal{T}_0(x) , v) + \rho(x , \mathcal{T}_1(v)) + \lambda\rho(x , v) \Big) \\
&\qquad - ((12) \otimes_H \mathcal{T}_1)\Big(\rho(\mathcal{T}_0(y) , u) + \rho(y , \mathcal{T}_1(u)) +\lambda \rho(y , u) \Big) \\
&\qquad + (\mathrm{id}_{H^{\otimes 2}} \otimes_H \mathcal{T}_1)\Big({[\mathcal{T}_1(u) * v]}_{L_1} + {[u * \mathcal{T}_1(v)]}_{L_1} +\lambda {[u * v]}_{L_1} \Big)\Big) \\
&= ((23) \otimes_H (\mathcal{T}_0 \oplus \mathcal{T}_1))\Big([(x, u) * (\mathcal{T}_0 \oplus \mathcal{T}_1)(y, v)] + [(\mathcal{T}_0 \oplus \mathcal{T}_1)(x, u) * (y, v)] + \lambda[(x, u) * (y, v)]\Big).
\end{align*}
This shows that the map $\mathcal{T}_0 \oplus \mathcal{T}_1 : L_0 \oplus L_1 \to L_0 \oplus L_1$ is a Rota--Baxter operator. This proves the result.
\end{proof} 

\begin{theorem}\label{thm:cross_rb}
There is a one-to-one correspondence between strict $2$-term Rota--Baxter $L_\infty$-$H$-pseudoalgebras and crossed modules of Rota--Baxter Lie $H$-pseudoalgebras.
\end{theorem}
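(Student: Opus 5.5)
The plan is to write down explicit constructions in both directions, modelled on the classical correspondence between strict $2$-term $L_\infty$-algebras and crossed modules of Lie algebras, and then check that they are mutually inverse.

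\emph{From strict structures to crossed modules.} Start from a strict $2$-term Rota--Baxter $L_\infty$-$H$-pseudoalgebra $\mathcal{L}_\mathcal{T}=(\beta_1:L_1\to L_0,\beta_2,0,\mathcal{T}_0,\mathcal{T}_1,0)$. Define:
\begin{itemize}
\item $[x*y]_{L_0}:=\beta_2(x,y)$ for $x,y\in L_0$;
\item $[u*v]_{L_1}:=\beta_2(\beta_1(u),v)$ for $u,v\in L_1$;
\item $\rho(x,u):=\beta_2(x,u)$;
\item $t:=\beta_1$.
\end{itemize}

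Each required property follows from one of the listed identities:
\begin{itemize}
\item Skew-symmetry of $[\cdot*\cdot]_{L_0}$ is (st3), and its Jacobi identity is (st6).
\item $\rho$ is a representation by (st7).
\item Skew-symmetry of $[\cdot*\cdot]_{L_1}$ follows from (st5) together with (st2): $\beta_2(\beta_1 u,v)=\beta_2(u,\beta_1 v)=-((12)\otimes_H\mathrm{id})\beta_2(\beta_1 v,u)$.
\item The Jacobi identity for $[\cdot*\cdot]_{L_1}$ follows from (st7) with $x=\beta_1(u)$, $y=\beta_1(v)$, after rewriting $\beta_2(\beta_1 u,\beta_1 v)$ via (st4).
\item The two crossed-module identities are exactly (st4) and the definition of $[\cdot*\cdot]_{L_1}$.
\item $t$ is a Lie morphism by (st4) with $x=\beta_1(u)$.
\item $\mathcal{T}_0$ is a Rota--Baxter operator on $L_0$ by (st10).
\item $\mathcal{T}_1$ is a Rota--Baxter operator on $L_1$: substitute $x=\beta_1(u)$ in (st9) and use (st8), so that $\beta_1\mathcal{T}_1(u)=\mathcal{T}_0\beta_1(u)$ appears in the right places.
\item $t$ commutes with the Rota--Baxter operators by (st8).
\end{itemize}

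\emph{From crossed modules to strict structures.} Conversely, given a crossed module $({L_0}_{\mathcal{T}_0},{L_1}_{\mathcal{T}_1},t,\rho)$, set:
\begin{itemize}
\item $\beta_1:=t$ and $\beta_3:=0$;
\item $\beta_2(x,y):=[x*y]_{L_0}$, $\beta_2(x,u):=\rho(x,u)$, $\beta_2(u,x):=-((12)\otimes_H\mathrm{id})\rho(x,u)$, $\beta_2(u,v):=0$;
\item $\mathcal{T}_2:=0$.
\end{itemize}

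Then (st1)--(st3) hold by construction. For the remaining identities:
\begin{itemize}
\item (st4) is the first crossed-module axiom.
\item (st5) follows from the second axiom and skew-symmetry of $[\cdot*\cdot]_{L_1}$, since both sides reduce to $[u*v]_{L_1}$ up to the stated permutation.
\item (st6) is the Jacobi identity of $L_0$, and (st7) is the representation property of $\rho$.
\item (st8) says $t$ commutes with $\mathcal{T}_0,\mathcal{T}_1$, and (st10) is the Rota--Baxter identity for $\mathcal{T}_0$.
\end{itemize}

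\emph{Expected obstacle: (st9).} In this direction (st9) reads
\[
\rho(\mathcal{T}_0x,\mathcal{T}_1u)=(\mathrm{id}\otimes_H\mathcal{T}_1)\big(\rho(\mathcal{T}_0x,u)+\rho(x,\mathcal{T}_1u)+\lambda\rho(x,u)\big).
\]
This compatibility is exactly what makes $\mathcal{T}_0\oplus\mathcal{T}_1$ a Rota--Baxter operator on the semidirect-type algebra of Proposition \ref{prop:directsum_rb}. The definition of a crossed module of Rota--Baxter Lie $H$-pseudoalgebras does not list it explicitly, yet the proof of Proposition \ref{prop:directsum_rb} uses it implicitly.

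I would handle it by reading the definition as requiring $(L_1,\mathcal{T}_1,\rho)$ to be a representation of the Rota--Baxter Lie $H$-pseudoalgebra ${L_0}_{\mathcal{T}_0}$ in the sense of Definition \ref{rbrep}, which gives (st9) directly. If that reading is not permitted, I would try to derive (st9) from the axioms, but I expect this is not possible in general because $t$ need not be injective; so I would make the convention explicit. Under this convention, conversely, (st9) in the first direction supplies precisely this Rota--Baxter representation condition.

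\emph{Bijectivity.} Finally, the two constructions are inverse to each other. The only point needing care is that the bracket recovered on $L_1$, namely $\beta_2(\beta_1 u,v)=\rho(t(u),v)$, equals the original $[u*v]_{L_1}$; this is the second crossed-module axiom. All other data are transported unchanged.
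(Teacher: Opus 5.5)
Your proposal is correct and follows essentially the same route as the paper. The dictionary is the same: $t=\beta_1$, $\rho=\beta_2|_{L_0\boxtimes L_1}$ and $[u*v]_{L_1}=\beta_2(\beta_1 u,v)$ in one direction. The converse is also the same: $\beta_2$ is extended by skew-symmetry and zero, with $\beta_3=0$ and $\mathcal{T}_2=0$, and mutual inverseness rests on the second crossed-module axiom. You are in fact slightly more explicit than the paper, for example in invoking (st4) for the Jacobi identity on $L_1$ and (st8) for the Rota--Baxter identity of $\mathcal{T}_1$.

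Your treatment of (st9) is also right. The literal definition does not imply it: take $t=0$, $L_1$ abelian, $\mathcal{T}_0=0$, $\lambda=0$, $\mathcal{T}_1=\mathrm{id}$ and $\rho\neq 0$. Reading the crossed-module definition as requiring $(L_1,\mathcal{T}_1,\rho)$ to be a Rota--Baxter representation of ${L_0}_{\mathcal{T}_0}$ is exactly what the paper tacitly assumes. It does so when it asserts that $\rho$ is a representation of ${L_0}_{\mathcal{T}_0}$ on ${L_1}_{\mathcal{T}_1}$, and again in the proof of Proposition~\ref{prop:directsum_rb}.
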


\begin{proof}
Let $\mathcal{L}_\mathcal{T} = (L_1 \xrightarrow{\beta_1} L_0, \beta_2, \beta_3 = 0, \mathcal{T}_0, \mathcal{T}_1, \mathcal{T}_2 = 0)= (L_1 \xrightarrow{\beta_1} L_0, \beta_2, \mathcal{T}_0, \mathcal{T}_1)$ be a strict $2$-term Rota--Baxter  $L_\infty$-$H$-pseudoalgebra. Then from  conditions  (st3), (st6), and (st10), we find that ${L_0}_{\mathcal{T}_0}=(L_0, \beta_2, \mathcal{T}_0)$ is a Rota--Baxter Lie $H$-pseudoalgebra. Next, we define a skew-symmetric $H^{\otimes 2}$-linear bracket ${[\cdot * \cdot]}_{L_1} : L_1 \boxtimes L_1 \to H^{\otimes 2} \otimes_H L_1$ by
\begin{align*}
{[u * v]}_{L_1} := \beta_2(\beta_1(u), v), \quad \text{for } u,v \in L_1.
\end{align*}
From conditions (st2), (st5), and (st7), we see that $(L_1, {[\cdot * \cdot]}_{L_1})$ is a Lie $H$-pseudoalgebra. Moreover, condition (st9) yields $\mathcal{T}_1 : L_1 \to L_1$, a Rota--Baxter operator. Hence ${L_1}_{\mathcal{T}_1}$ is also a Rota--Baxter Lie $H$-pseudoalgebra. On the other hand, the conditions (st4) and (st8) imply that $\beta_1 : {L_1}_{\mathcal{T}_1} \to {L_0}_{\mathcal{T}_0}$ is a homomorphism of Rota--Baxter Lie $H$-pseudoalgebras. Finally, we define an $H^{\otimes 2}$-linear map $\rho : L_0 \boxtimes L_1 \to H^{\otimes 2} \otimes_H L_1$ given by
\begin{align*}
\rho(x , u) := \beta_2(x, u), \quad \text{for } x \in L_0, u \in L_1.
\end{align*}
Then, (st7) and (st9) imply that $\rho$ defines a representation of ${L_0}_{\mathcal{T}_0}$ on ${L_1}_{\mathcal{T}_1}$.  Moreover, we have
\begin{align*}
\beta_1(\rho(x , u)) = \beta_1( \beta_2(x , u))= \beta_2(x , \beta_1(u)) \quad \text{and} \quad \rho(\beta_1(u) , v) = \beta_2(\beta_1(u), v) = {[u * v]}_{L_1},
\end{align*}
for $x \in L_0, u, v \in L_1$. Hence $( {L_0}_ {\mathcal{T}_0},{L_1}_ {\mathcal{T}_1}, \beta_1, \rho)$ is a crossed module of Rota--Baxter Lie $H$-pseudoalgebras.

Conversely, let $({L_1}_ {\mathcal{T}_1}, {L_0}_ {\mathcal{T}_0}, t, \rho)$ be a crossed module of Rota--Baxter Lie $H$-pseudoalgebras. Then it is easy to verify that $(L_1 \xrightarrow{t} L_0, \beta_2, \beta_3 = 0, \mathcal{T}_0, \mathcal{T}_1, \mathcal{T}_2 = 0)$ is a strict $2$-term Rota--Baxter  $L_\infty$-$H$-pseudoalgebra, where the bracket $ \beta_2 : L_i \boxtimes L_j \to H^{\otimes 2} \otimes_H L_{i+j}$ (for $0 \leq i, j \leq 1$) is given by
\begin{align*}
\beta_2(x , y) := {[x * y]}_{L_0}, \quad \beta_2(x , u) := \rho(x , u), \quad \beta_2(u , x) := -((12) \otimes_H \mathrm{id}_{L_1})\rho(x , u) \quad \text{and} \quad \beta_2(u * v) := 0,
\end{align*}
for $x, y \in L_0, u, v \in L_1$. The above two correspondences are inverse to each other. This completes the proof.
\end{proof} 

Combining Proposition~\ref{prop:directsum_rb} and Theorem~\ref{thm:cross_rb}, we get the following result.
\begin{proposition}
Let $\mathcal{L}_\mathcal{T} = (L_1 \xrightarrow{\beta_1} L_0, \beta_2, \beta_3 = 0, \mathcal{T}_0, \mathcal{T}_1, \mathcal{T}_2 = 0)$ be a strict $2$-term Rota--Baxter  $L_\infty$-$H$-pseudoalgebra. Then ${L_0 \oplus L_1}_{\mathcal{T}_0 \oplus \mathcal{T}_1}$ is a Rota--Baxter Lie $H$-pseudoalgebra with the pseudobracket given by
\begin{align*}
\Big[(x, u) * (y, v)\Big] := \Big(\beta_2(x, y), \beta_2(x, v) - ((12)\otimes_H \mathrm{id}_{L_1})\beta_2(y, u) + \beta_2(\beta_1(u), v)\Big),
\end{align*}
for $(x, u), (y, v) \in L_0 \oplus L_1$.
\end{proposition}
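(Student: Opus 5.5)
The plan is to obtain the statement by composing the two earlier results rather than re-verifying the Lie and Rota--Baxter axioms by hand. Starting from the strict $2$-term Rota--Baxter $L_\infty$-$H$-pseudoalgebra $\mathcal{L}_\mathcal{T}$, I will apply the forward direction of Theorem~\ref{thm:cross_rb}. This produces the crossed module $({L_0}_{\mathcal{T}_0},{L_1}_{\mathcal{T}_1},\beta_1,\rho)$, with the following data:
\begin{align*}
{[x*y]}_{L_0}=\beta_2(x,y),\qquad {[u*v]}_{L_1}=\beta_2(\beta_1(u),v),\qquad \rho(x,u)=\beta_2(x,u).
\end{align*}
The Rota--Baxter operators on the two components are $\mathcal{T}_0$ and $\mathcal{T}_1$. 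I will quote that theorem directly for the facts that ${L_0}_{\mathcal{T}_0}$ and ${L_1}_{\mathcal{T}_1}$ are Rota--Baxter Lie $H$-pseudoalgebras, that $\rho$ is a representation, and that the two crossed-module identities hold.

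Next I will apply Proposition~\ref{prop:directsum_rb} to this crossed module. It gives that $L_0\oplus L_1$, with the bracket \eqref{eq:crossedmod_rb} and the operator $\mathcal{T}_0\oplus\mathcal{T}_1$, is a Rota--Baxter Lie $H$-pseudoalgebra. It then remains to substitute the data above into \eqref{eq:crossedmod_rb}. The $L_0$-component is $\beta_2(x,y)$. The $L_1$-component is
\begin{align*}
\beta_2(x,v)-((12)\otimes_H\mathrm{id}_{L_1})\beta_2(y,u)+\beta_2(\beta_1(u),v),
\end{align*}
which is exactly the bracket in the statement.

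The only point needing care is the choice of writing in the $L_1$-component. The crossed-module identity $\rho(\beta_1(u),v)={[u*v]}_{L_1}$ holds by construction, so $\beta_2(\beta_1(u),v)$ may equally be read as $\rho(\beta_1(u),v)$, and (st5) lets it also be written as $\beta_2(u,\beta_1(v))$. I also need the middle term to be written with $\rho(y,u)$ rather than $\beta_2(u,y)$; (st2) provides this conversion. I expect no real obstacle, since all the substantive verification is carried by Theorem~\ref{thm:cross_rb} and Proposition~\ref{prop:directsum_rb}. The main thing to check is that the conventions, namely the ordering of $L_0$ and $L_1$ in the quadruple and the $(12)$ twist, match between the two results, so that the bracket produced agrees with the displayed one on the nose.
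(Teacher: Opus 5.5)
Your proposal is correct and is exactly the paper's argument: the paper proves this proposition by feeding the crossed module $({L_0}_{\mathcal{T}_0},{L_1}_{\mathcal{T}_1},\beta_1,\rho)$ produced by Theorem~\ref{thm:cross_rb} into Proposition~\ref{prop:directsum_rb}, after which the bracket \eqref{eq:crossedmod_rb} specializes directly to the displayed one. (You do not need (st2) for the middle term, since $\rho(y,u)=\beta_2(y,u)$ by definition; and when you say $\rho$ is a representation, it must be a Rota--Baxter representation, because the Rota--Baxter identity for $\mathcal{T}_0\oplus\mathcal{T}_1$ in Proposition~\ref{prop:directsum_rb} uses the compatibility (st9) of $\rho$ with $\mathcal{T}_0$ and $\mathcal{T}_1$, which the crossed module from Theorem~\ref{thm:cross_rb} does satisfy.)
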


\begin{example}
Let ${L}_{\mathcal{T}}$ be a Rota--Baxter Lie $H$-pseudoalgebra. Then $(L_{\mathcal{T}}, L_{\mathcal{T}}, \mathrm{id}, [\cdot*\cdot])$ is a crossed module of Rota--Baxter Lie $H$-pseudoalgebras, where $[\cdot*\cdot]$ denotes the adjoint representation. Therefore, it follows that,
$$\Big( L \xrightarrow{\mathrm{id}} L, {[ \cdot * \cdot ]}_L, \beta_3 = 0, \mathcal{T}_0 = \mathcal{T}, \mathcal{T}_1= \mathcal{T}, \mathcal{T}_2= 0 \Big)$$ is a strict $2$-term Rota--Baxter  $L_\infty$-$H$-pseudoalgebra.
\end{example}

\begin{example}
Let ${L}_{\mathcal{T}_L}$ and ${M}_{\mathcal{T}_M}$ be two Rota--Baxter Lie $H$-pseudoalgebras and $f : {L}_{\mathcal{T}_L} \to {M}_{\mathcal{T}_M}$ be a morphism of Rota--Baxter Lie $H$-pseudoalgebras. Then $(ker f, L, inc, [\cdot*\cdot])$ is a crossed module of Rota--Baxter Lie $H$-pseudoalgebras, where $inc: ker f \to L$ is the inclusion map.
\end{example} 
\section*{Declarations} 
\subsection*{ Funding} This work is funded by the Second batch of the Provincial project of the Henan Academy of Sciences (No. 241819105). This paper is also sponsored by NNSFC (No.12471038, No.12171129) and ZJNNF (No.Z25A010006).
\subsection*{Data availability }No data sets were generated or analyzed during the current study
\subsection*{Author contribution} All the authors contributed equally to this work.
\subsection*{Conflict of interest/Competing interests}The authors declare that they have no conflict of interest and
competing interest.

\end{document}